\documentclass[12pt]{article}
\makeatletter

\usepackage{etoolbox}

\newtoggle{kms@short}\togglefalse{kms@short}
\newtoggle{kms@nobib}\togglefalse{kms@nobib}

\usepackage{amssymb}
\usepackage{amsmath}
\usepackage{amsthm}
\usepackage{mathtools}
\usepackage{mathrsfs}
\usepackage{enumitem}
\usepackage{float}
\usepackage{setspace}
\usepackage{tikz}
\tikzset{every picture/.style={line width=0.75pt}}
\usepackage[labelfont=bf]{caption}
\allowdisplaybreaks

\iftoggle{kms@short}{%
  \theoremstyle{definition}%
  \newtheorem{definition}{Definition}%
  \theoremstyle{plain}%
}{%
  \newtheorem{theorem}{Theorem}%
  \newtheorem{lemma}{Lemma}%
  \newtheorem{corollary}{Corollary}%
  \theoremstyle{definition}%
  \theoremstyle{plain}%
}

\usepackage[colorlinks=true,allcolors=black]{hyperref}
\usepackage{url}

\iftoggle{kms@nobib}{}{%
  \usepackage[backend=biber,style=alphabetic,sorting=nyt,%
                  maxbibnames=99,giveninits=true]{biblatex}%
}

\providecommand{\kmstitlecaps}[1]{{\large\textbf{\MakeUppercase{#1}}}}

\newcommand{\kmsaddress}[3]{%
  #1:\ \textsc{#2}\\\nopagebreak
  \textit{Email address:} \texttt{#3}\par
}
\newcommand{\Addresses}[1]{{%
  \footnotesize
  #1%
}}
\makeatother

\makeatletter

\usepackage{amsmath}

\providecommand{\dd}{\mathrm{d}}

\makeatother

\makeatletter
\usepackage{xcolor}
\usepackage{hyperref}

\newif\ifcorrespondencedraft\correspondencedraftfalse

\def\eld@stop{\eld@stop}
\begingroup
  \catcode`\_=12\relax
  \gdef\eld@scan#1{%
    \ifx#1\eld@stop
    \else
      #1%
      \ifx#1.\allowbreak\fi
      \ifx#1_\allowbreak\fi
      \expandafter\eld@scan
    \fi}
\endgroup
\newcommand{\leandecl}[1]{%
  \begingroup\small\ttfamily
    \expandafter\eld@scan\detokenize{#1}\eld@stop
  \endgroup}

\usepackage{fvextra}

\usepackage{fontspec}
\newfontfamily\leanfont{JuliaMono}[
  Extension = .ttf,
  UprightFont = *-Regular,
  BoldFont = *-Bold,
  ItalicFont = *-RegularItalic,
  Scale = MatchLowercase,
]

\newcommand{\leaninline}[1]{{\leanfont #1}}
\definecolor{eldkeyi}{HTML}{0050A0}
\definecolor{eldkeyii}{HTML}{A33B00}
\definecolor{eldkeyiii}{HTML}{00664D}
\definecolor{eldkeyiv}{HTML}{7B2D8E}
\definecolor{eldkeyv}{HTML}{444444}
\definecolor{eldkeyvi}{HTML}{B10063}
\definecolor{eldkeyvii}{HTML}{775100}
\definecolor{eldkeyviii}{HTML}{00607A}

\newcommand{\leankey}[2]{%
  \ifcase#1\relax
    \textcolor{eldkeyi}{#2}%
  \or\textcolor{eldkeyi}{#2}%
  \or\textcolor{eldkeyii}{#2}%
  \or\textcolor{eldkeyiii}{#2}%
  \or\textcolor{eldkeyiv}{#2}%
  \or\textcolor{eldkeyv}{#2}%
  \or\textcolor{eldkeyvi}{#2}%
  \or\textcolor{eldkeyvii}{#2}%
  \or\textcolor{eldkeyviii}{#2}%
  \else\textcolor{eldkeyi}{#2}%
  \fi}

\DefineVerbatimEnvironment{LeanCodeKeyed}{Verbatim}{%
  fontsize=\footnotesize,
  formatcom=\leanfont,
  breaklines=true,
  breakanywhere=false,
  breaksymbolleft={},
  breaksymbolright={},
  xleftmargin=1em,
  commandchars=\£\«\»,
}

\usepackage{manyfoot}
\DeclareNewFootnote{Review}[fnsymbol]
\newcounter{reviewsymbol}
\newcommand{\reviewnotecolour}{blue}
\newcommand{\replycolour}{red}

\newcommand{\reviewnote}[2]{%
  \ifcorrespondencedraft
    \stepcounter{reviewsymbol}%
    \ifnum\value{reviewsymbol}>9\setcounter{reviewsymbol}{1}\fi
    \footnoteReview[\value{reviewsymbol}]{%
      \textcolor{\reviewnotecolour}{#1}\quad
      \textcolor{\replycolour}{#2}}%
  \fi}
\makeatother

\usepackage{xurl}
\usepackage{fvextra}
\usepackage{ragged2e}
\usetikzlibrary{arrows.meta, positioning, calc}

\newcommand{\leanversion}{v4.31.0-rc1}
\newcommand{\mathlibcommit}{542645a}
\newcommand{\projectcommit}{da8dd98}

\makeatletter
\let\eld@footnotetext\@footnotetext
\renewcommand{\@footnotetext}[1]{\eld@footnotetext{\RaggedRight #1}}
\makeatother

\makeatletter
\renewcommand\paragraph{\@startsection{paragraph}{4}{\z@}%
  {0.5\baselineskip \@plus 2pt \@minus 1pt}{-1em}{\normalfont\normalsize\bfseries}}
\makeatother
\AtBeginEnvironment{quote}{%
  \setlength{\topsep}{0.2\baselineskip}\setlength{\partopsep}{0pt}%
  \setlength{\parskip}{0pt}}

\DefineVerbatimEnvironment{LeanCode}{Verbatim}{%
  fontsize=\footnotesize,
  formatcom=\leanfont,
  breaklines=true,
  breakanywhere=true,
  breaksymbolleft={},
  breaksymbolright={},
  xleftmargin=1em,
}

\renewbibmacro{in:}{\ifentrytype{article}{}{\printtext{\bibstring{in}\intitlepunct}}}
\AtEveryBibitem{\iffieldequalstr{eprinttype}{arXiv}{\clearfield{doi}}{}}
\DeclareFieldFormat{doi}{doi\addcolon\space\href{https://doi.org/#1}{\mbox{\nolinkurl{#1}}}}

\begin{document}

\title{\kmstitlecaps{Formalising Linear Elliptic PDE Theory in Lean~4}}
\author{\textsc{Alejandro Jos\'e Soto Franco} \& \textsc{Kobe Marshall-Stevens}}
\date{\vspace{-4.5ex}}
\maketitle

\begin{abstract}
    \noindent We formalise in Lean~4, on top of Mathlib, the solvability of the
    Dirichlet problem for second-order linear elliptic operators in divergence
    form. The machine-verified results, with no \texttt{sorry} in the
    development, include the Poincar\'e inequality, the existence of weak
    solutions by the Lax--Milgram theorem, Rellich--Kondrachov compactness, the
    Fredholm alternative, the spectral theorem, interior regularity estimates, and
    the Sobolev embedding theorem. From these results we obtain a formalisation of
    classical solvability for sufficiently regular coefficients and data. Our
    Lean library includes a self-contained theory of Sobolev spaces developed
    independently of existing formalisations. Throughout the paper we associate
    each prose statement with the named machine-checked Lean declaration that
    discharges it. 
\end{abstract}

\begin{center}
\small
\textit{Mathematics Subject Classifications (2020):}\,\,35J15, 46E35,  47A53, 68V15,
68V20.\\
\textit{Keywords:}\,formalisation of mathematics, linear elliptic operator, Lean~4,
Mathlib, Sobolev space.
\end{center}

{\setlength{\parskip}{0pt}\tableofcontents}

\section{Introduction}\label{sec: introduction}

Mathematical analysis has recently come within reach of interactive proof
assistants, but the
formalisation of partial differential equations (PDE) theory is still at an early stage. In \cite{armstrong-2026-for-de-gio}, interior De
Giorgi--Nash--Moser regularity theory was formalised in Lean~4 (see
\cite{demoura-2021-lea-4-the}) on top of Mathlib (see
\cite{themathlibcommun-2020-lea-mat-lib}), building in the process a library for Sobolev
spaces on bounded domains. The surrounding analytic infrastructure has grown alongside
it. A non-exhaustive list of examples include formalisations of the
Gagliardo--Nirenberg--Sobolev inequality in \cite{vandoorn-2024-int-wit-int}, Schwartz
functions, tempered distributions, and Sobolev spaces on
Euclidean space through the Fourier transform in \cite{doll-2025-for-sch-fun},
as well as the $h$-principle and sphere eversion in \cite{massot-2022-for-hpr-sph}. We
also mention that, beyond Lean, a Coq proof of the Lax--Milgram theorem was formalised
in
\cite{boldo-2017-coq-for-pro}. These developments suggest that a
substantial part of modern analysis can now be placed on machine-verified footing. 

To this end, the present work concerns formalisation, in Lean~4 on top of Mathlib, of
the solvability of the Dirichlet problem for general second-order linear elliptic
operators in divergence form. This theory is
central to modern PDE, with its treatment via
functional-analytic methods by now standard (e.g.~see \cite{
gilbarg-2001-ell-par-dif, evans-2010-par-dif-equ, brezis-2011-fun-ana-sob}). While we
focus on solvability for the Dirichlet problem specifically, we expect that the various
tools developed here bring much of the remaining standard theory for linear elliptic PDE (boundary regularity, variational methods, the Neumann problem, etc.)~well within reach
of formalisation within Lean.

In certifying a formal proof, a reader must confirm that the Lean code provided indeed expresses the intended statement. We therefore accompany our formalisation with a transcription from Lean to mathematical prose; each of the key results is stated in standard mathematical prose and accompanied by the Lean statement of the declaration that proves
it. 

\subsection{Mathematical preliminaries}\label{subsec: preliminaries}

Let $\Omega \subseteq \mathbb{R}^n$ be a bounded set. We consider second-order
linear partial differential operators $L$ in divergence form, with coefficients $a^{ij}, b^i, c \in L^\infty(\Omega)$ for each integer $1 \leq i,j
\leq n$, acting on
functions $u \in C^2(\Omega)$ by
\begin{equation}\label{eqn: divergence form operator}
    L u = -\sum_{j=1}^n D_j \Bigl( \sum_{i=1}^n a^{ij}\, D_i u \Bigr)
        + \sum_{i=1}^n b^i\, D_i u + c\, u.
\end{equation}
We assume that $L$ is uniformly elliptic in the sense that the matrix
of coefficients $a^{ij}$ is uniformly positive definite and bounded. Equivalently, we assume
that there exist constants $0 < \lambda \leq \Lambda < \infty$ such that for
each $\xi \in \mathbb{R}^n$
\begin{equation}\label{eqn: ellipticity}
    0 < \lambda |\xi|^2 \leq \sum_{i,j = 1}^na^{ij}(x) \xi^i\xi^j \leq \Lambda |\xi|^2
\end{equation}
for almost every $x \in \Omega$. Given $f \in L^2(\Omega)$, we address the solvability, both weakly and classically, of
the Dirichlet problem
\begin{equation}\label{eqn: dirichlet problem}
    \begin{cases}
        Lu  = f & \text{in } \Omega,\\
        u = 0 & \text{on } \partial \Omega.
    \end{cases}
\end{equation}
Multiplying the equation $L u = f$ by a test function and integrating by
parts yields a weak formulation of the Dirichlet problem on the Sobolev space
$H_0^1(\Omega)$. Namely, we say that a function $u \in H_0^1(\Omega)$ is a
weak solution of
    \eqref{eqn: dirichlet problem} if
    \begin{equation}\label{eqn: weak form}
        B[u, v] = \langle f, v \rangle_{L^2(\Omega)}
        \qquad\text{for all } v \in H_0^1(\Omega),
    \end{equation}
    where the bilinear form
    $B \colon H_0^1(\Omega) \times H_0^1(\Omega) \to \mathbb{R}$ associated with
    $L$ (e.g.~see \cite[\S 6.1.2]{evans-2010-par-dif-equ}) is defined by
    \begin{equation}\label{eqn: bilinear form}
        B[u, v]
        = \sum_{i,j = 1}^n\int_\Omega \bigl( a^{ij}\, D_i u \, D_j v + b^i (D_i u)\, v
            + c\, u v \bigr).
    \end{equation}
    If in addition $a^{ij} \in C^1(\Omega)$ and $u \in C^2(\Omega)$, then by the
    fundamental lemma of the calculus of variations a weak solution of
    (\ref{eqn: dirichlet problem}) satisfies $Lu = f$ almost everywhere in
    $\Omega$ and is in this sense a classical solution of
    (\ref{eqn: dirichlet problem}), whose boundary condition is expressed by the
    membership $u \in H_0^1(\Omega)$.

    A common approach to the classical solvability of
    (\ref{eqn: dirichlet problem}) proceeds in two stages. One first obtains a
    weak solution by functional-analytic methods, namely the Lax--Milgram theorem
    and the Fredholm alternative, and then shows by interior regularity
    estimates and the Sobolev embedding theorem that this weak solution is a
    classical solution whenever the coefficients and the data are sufficiently
    regular. We formalise this approach, largely following the presentation of
    \cite[Chapter~6]{evans-2010-par-dif-equ}, adapting several of the proofs
    from the notes of \cite[Chapters~1--3]{guo-2026-par-dif-equ} (based on a course on elliptic PDE taught by the second-named author).

\subsection{Formalisation contributions}\label{subsec: this work}
We provide a Lean~4 library, built on top of Mathlib, containing the formalisation of
statements leading to the classical solvability of (\ref{eqn: dirichlet problem}). The
Lean~4 library that accompanies this paper is referred to as the \textbf{library}
throughout the manuscript. This is a standalone Mathlib-based development
containing each declaration against which the
statements of Section~\ref{sec: statements} are discharged. The structure of the
library is further described in
Section~\ref{sec: library} and pinned at commit \texttt{\projectcommit} by our
code availability statement. The library can be found here:
\begin{center}
\begin{sloppy}\url{https://github.com/alejandro-soto-franco/EllipticPDE}\end{sloppy}
\end{center}

The main contributions contained in the library are the following:
\begin{enumerate}

\item\label{contrib: elliptic} A self-contained weak-derivative Sobolev layer, developed independently of the one in \cite{armstrong-2026-for-de-gio}, together with a formalised account built on top of it of the weak solvability of (\ref{eqn: dirichlet problem}). The precise statements formalised are detailed in Subsection \ref{subsec: solvability statements}.

\item\label{contrib: higher} Higher interior regularity for weak solutions of (\ref{eqn: dirichlet problem}) and the Sobolev embedding, leading to classical solvability. The precise statements formalised are detailed in Subsection \ref{subsec: regularity}.

\item\label{contrib: methodology} An explicit correspondence that associates with each prose statement the named machine-checked Lean declaration proving it. Section~\ref{sec: statements} transcribes this correspondence by printing each Lean statement between the result that it proves and the sketch of its proof. The definitions used in these statements are collected in Appendix~\ref{sec: appendix transcription}.
\end{enumerate}

\subsection{Formalisation framework}\label{subsec: framework}

Each step of a proof in Lean gives rise to an obligation to which we assign one
of four statuses. An obligation is \emph{discharged} if a
machine-checked Lean proof term establishes it, so that its correctness depends
on nothing beyond the kernel and the stated axioms. It is \emph{warranted} if it
is deferred to a located external source, \emph{routine} if it relies on
shared mathematical competence, and \emph{open} if no justification for it has
yet been given. The discharge of an obligation thus has two components, namely
the machine check of the type of the declaration and a human audit confirming
that the prose statement agrees with that type. A proof is called
\emph{formal} if every one of its steps is discharged, so that no obligation
remains.

The library was written with the language models Claude Opus 4.5 and Claude
Opus 5, used as agents in the Claude Code harness under the direction of the
first-named author. The authors chose the target theorems, the weak
formulation, the encoding of $H_0^1(\Omega)$ as a graph, the coefficient bundle, the route of each proof, and compared each Lean type with the corresponding
prose statement. The agents searched Mathlib and the library
for existing declarations and their exact types, decomposed each target into
named auxiliary declarations, attempted the tactic proofs, and proposed
improvements. No output of a model forms part of the trusted proof object, since
the Lean kernel checks every proof term by the same rules irrespective of how
the term was produced.

The criteria for acceptance were mechanical and can be reproduced from the
pinned commit. A proposed change was admitted to the library only if the whole
development built from a clean clone with no \texttt{sorry} and no warning, the
environment linter passed, and \texttt{\#print axioms} reported for every named
declaration exactly the three standard axioms of the Lean core library (namely
propositional extensionality (\texttt{propext}), the axiom of choice
(\texttt{Classical.choice}), and quotient soundness (\texttt{Quot.sound})). The
library pins those reports at every named declaration, so a dependence on a
further axiom fails the build.

\subsection{Structure}

The paper is organised as follows. In Section~\ref{sec: statements} we state the
main formalised results. Each result is followed by the Lean statement of the
declaration that proves it along with a sketch of proof naming the declarations on which the formal proof depends. Section~\ref{sec: library} describes the structure of the Lean library. Section~\ref{sec: discussion} concludes the paper with the discharge ratio, the part of the audit that the kernel cannot perform, the relation to prior work and
directions for further research. Appendix~\ref{sec: appendix transcription}
gives the Lean code for each definition used in the statements.

\section*{Declarations}

\noindent\textbf{Code availability.}\ \,The Lean library is openly
available at
\begin{sloppy}\url{https://github.com/alejandro-soto-franco/EllipticPDE}\end{sloppy}.
An archival release with a persistent identifier will be deposited upon
publication. The version described here is commit \texttt{\projectcommit},
built with Lean~4~\leanversion{} and Mathlib commit
\texttt{\mathlibcommit}.\par\smallskip
\noindent\textbf{Author contributions.}\ \,Both authors contributed to the
mathematical design and to the writing of the manuscript. The first-named author
carried out the Lean formalisation and its transcription into mathematical
prose.\par\smallskip
\noindent\textbf{Use of language-model assistance.}\ \,The Lean library and
this manuscript were written with the assistance of language-model agents under
the direction of the first-named author. No output of a language model forms part
of the trusted proof object. The models used, the division of work and the
acceptance checks are described in Subsection~\ref{subsec: framework}.\par\smallskip
\noindent\textbf{Acknowledgements.}\ \,The authors are grateful to James Guo
for the typeset notes \cite{guo-2026-par-dif-equ} based on courses taught by the
second-named author.\par

\section{Formalised statements}\label{sec: statements}

In this section we state the formalised results that lead to the classical
solvability of \eqref{eqn: dirichlet problem}. Subsection~\ref{subsec:
solvability statements} gives the statements that lead to weak solvability, and
Subsection~\ref{subsec: regularity} those that lead to the regularity of weak
solutions. We state only those formalised results leading directly to
the classical solvability of \eqref{eqn: dirichlet problem}, and so do not survey every formalised result in the library.

Each result is stated in standard mathematical language together with a
citation to a standard treatment that it formalises. The Lean
statement of the declaration that proves it follows the result and is itself
followed by a sketch of the proof. The proof sketches follow the cited proof and say
where the formal proof departs from it. This happens when a hypothesis is replaced
by one that is more convenient to state in Lean, when a constant that the cited
proof leaves implicit has to be produced, or when an argument is reorganised as
an induction that the proof assistant can check. The sketch also places the
result in the formal development by naming in footnotes the auxiliary
declarations on which its proof depends, so that the adaptations of the cited
proofs can be traced to the library.

\subsection{Weak solvability of the Dirichlet problem}\label{subsec: existence
statements}\label{subsec: solvability statements}\label{subsec: variational}

We begin with the Poincar\'e inequality on bounded sets:

\begin{theorem}[{Poincar\'e inequality, \cite[\S 5.6.1, Theorem~3]{evans-2010-par-dif-equ}}]\label{thm: poincare}
    Let $\Omega \subseteq \mathbb{R}^n$ be a bounded set. Then there is a
    constant $C_P \geq 0$, depending only on $n$ and $\Omega$, such that
    \begin{equation*}
        \| u \|_{L^2(\Omega)} \leq C_P \, \| \nabla u \|_{L^2(\Omega)}
        \qquad\text{for all } u \in H_0^1(\Omega) .
    \end{equation*}
\end{theorem}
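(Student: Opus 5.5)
We prove the inequality first for smooth compactly supported functions and then pass to $H_0^1(\Omega)$ by density. Since $\Omega$ is bounded, we may choose $R>0$ with $\Omega \subseteq (-R,R)\times\mathbb{R}^{n-1}$; in fact it suffices that $\Omega$ is bounded in the single direction $x_1$. We take $C_P = 2R$, which depends only on $\Omega$ (through $R$) and trivially on $n$.

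For $\varphi \in C_c^\infty(\Omega)$, extended by zero to $\mathbb{R}^n$, the fundamental theorem of calculus in $x_1$ gives
\begin{equation*}
    \varphi(x_1,x') = \int_{-R}^{x_1} D_1\varphi(t,x')\,\dd t .
\end{equation*}
By Cauchy--Schwarz,
\begin{equation*}
    |\varphi(x_1,x')|^2 \leq 2R \int_{-R}^{R} |D_1\varphi(t,x')|^2\,\dd t
    \qquad\text{for } x_1\in(-R,R).
\end{equation*}
Integrating over $x_1 \in (-R,R)$ and then over $x' \in \mathbb{R}^{n-1}$, with Fubini/Tonelli, yields
\begin{equation*}
    \|\varphi\|_{L^2}^2 \leq 4R^2 \|D_1\varphi\|_{L^2}^2 \leq 4R^2\|\nabla\varphi\|_{L^2}^2 .
\end{equation*}
In Lean this one-dimensional step would use Mathlib's interval-integral FTC together with a product-measure decomposition of volume on $\mathbb{R}\times\mathbb{R}^{n-1}$, or equivalently on \texttt{EuclideanSpace}.

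For general $u \in H_0^1(\Omega)$ we take a sequence $\varphi_k \in C_c^\infty(\Omega)$ with $\varphi_k \to u$ in $H^1$. Both sides of the inequality are continuous in the $H^1$ norm, since $u\mapsto \|u\|_{L^2}$ and $u\mapsto\|\nabla u\|_{L^2}$ are $1$-Lipschitz, so the inequality passes to the limit. If the library encodes $H_0^1(\Omega)$ as the closure of the graph $\{(\varphi,\nabla\varphi)\}$ in $L^2\times (L^2)^n$, then this step is exactly the statement that the closed set $\{(u,g) : \|u\|\le C_P\|g\|\}$ contains the graph, and hence contains its closure. We would set it up precisely that way.

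The main obstacle is the formal Fubini/slicing step: rewriting the volume on $\mathbb{R}^n$ as a product measure, justifying integrability of each slice for compactly supported smooth $\varphi$, and transporting the one-dimensional FTC bound through the measurable equivalence. We would first try to work on the domain $\mathrm{Fin}\,n \to \mathbb{R}$ with \texttt{MeasureTheory.measurePreserving\_piFinSuccAbove} to split off one coordinate. We would then prove the slicewise estimate as a standalone lemma about compactly supported $C^1$ functions on $\mathbb{R}$ vanishing outside $(-R,R)$, and integrate it with \texttt{lintegral} to avoid integrability side conditions.
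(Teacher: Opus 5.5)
Your proof is correct and takes essentially the same route as the paper. Both arguments bound test functions line by line using the fundamental theorem of calculus and Cauchy--Schwarz, integrate with Fubini, and pass to $H_0^1(\Omega)$ through closedness of the inequality under limits of test graphs.

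The differences are minor. You use a single direction on a slab and obtain the constant $4R^2$, which loses a factor of $2$ because you bound $x_1+R\le 2R$ before integrating. The paper instead proves the one-dimensional constant $(b_i-a_i)^2/2$ in each direction on an enclosing box and averages the $n$ bounds to obtain $C = L^2/(2n)$.
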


\paragraph{\leandecl{poincare_H01_of_bounded}}
The Lean theorem is stated as
\begin{quote}
\begin{LeanCode}
theorem poincare_H01_of_bounded {Ω : Set (EuclideanSpace ℝ (Fin (n + 1)))}
    (hΩb : Bornology.IsBounded Ω) :
    ∃ C : ℝ, 0 ≤ C ∧ ∀ (U : H1amb Ω), U ∈ H01 Ω →
      ‖U 0‖ ^ 2 ≤ C * ∑ i : Fin (n + 1), ‖U i.succ‖ ^ 2
\end{LeanCode}
\end{quote}

\begin{proof}[Proof sketch]
    Whereas \cite[\S 5.6.1, proof of Theorem~3]{evans-2010-par-dif-equ} deduces the inequality from the
    Gagliardo--Nirenberg--Sobolev inequality, which requires $p < n$, the formal
    proof reduces it to one dimension on a box and obtains an explicit constant
    in every dimension (as in the statement of 
    \cite[Corollary~9.19]{brezis-2011-fun-ana-sob}). Let $Q = \prod_i (a_i, b_i)$ be such a box. On each line parallel to
    the $i$th axis a test function on $Q$ is compactly supported in
    $(a_i, b_i)$, so that by the one-dimensional inequality of
    Lemma~\ref{lem: poincare 1d} (proved in
    Section~\ref{sec: library} together with the Cauchy--Schwarz inequality of
    Lemma~\ref{lem: cauchy schwarz} on which it rests), the integral of its
    square along that line is bounded by $\tfrac{(b_i - a_i)^2}{2}$ multiplied by the integral of $(\partial_i u)^2$ along it. Integrating over the remaining directions by Fubini's
    theorem, we obtain the corresponding bound on $Q$ for each direction.
    Averaging these $n$ bounds with $L = \max_i(b_i - a_i)$ gives the
    inequality on $C_c^\infty(Q)$ with $C_P = L/\sqrt{2n}$, from which the
    inequality on $H_0^1(Q)$ follows by approximation. To pass from a box to a
    bounded set $\Omega$ we enclose it in a box $Q$ as above. A test function on
    $\Omega$ is then a test function on $Q$ whose integrals over the two
    sets coincide. Hence the
    constant for $Q$ is also admissible for $\Omega$. The formalised
    statement is in the squared form
    $\|u\|_{L^2(\Omega)}^2 \leq C \|\nabla u\|_{L^2(\Omega)}^2$ with $C = C_P^2$ and asserts
    only the existence of $C$, whereas the statement for boxes gives the
    admissible value $C = L^2/(2n)$. Every constant quoted below is the
    unsquared $C_P$.\footnote{\leandecl{EllipticPdes.Poincare.poincare_H01_of_bounded},
    with the statement for boxes \leandecl{poincare_H01_euclBox}.}
\end{proof}

We state the Lax--Milgram theorem next:

\begin{theorem}[{Lax--Milgram theorem, \cite[\S 6.2.1,
Theorem~1]{evans-2010-par-dif-equ}}]\label{thm: lax milgram}
    Let $(H, \langle \cdot, \cdot \rangle)$ be a Hilbert space,
    $B \colon H \times H \to \mathbb{R}$ be a bounded coercive bilinear form,
    with $B[u, u] \geq \beta \| u \|_H^2$ for some $\beta > 0$, and 
    $f \in H^{\ast}$. Then, there is a unique $u \in H$ such that
    \begin{equation*}
        B[u, v] = \langle f, v \rangle \qquad \text{for all } v \in H .
    \end{equation*}
\end{theorem}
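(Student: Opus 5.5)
The plan is to reduce the statement to the Riesz representation theorem and then to a bounded-below linear operator. By boundedness, $|B[u,v]| \leq \alpha \|u\|_H\|v\|_H$ for some $\alpha \geq 0$. For each fixed $u \in H$, the map $v \mapsto B[u,v]$ is therefore a bounded linear functional on $H$. The Riesz representation theorem then gives a unique $Au \in H$ with $B[u,v] = \langle Au, v\rangle$ for all $v$. Uniqueness in the Riesz theorem makes $u \mapsto Au$ linear, and $\|Au\| \leq \alpha\|u\|$. Similarly, the functional $f \in H^\ast$ is represented by some $w \in H$. The problem thus becomes: show that $A \colon H \to H$ is a bijection, and take $u = A^{-1}w$. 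In Lean I would build $A$ as a continuous linear map by currying $B$ into $H \to_L H^\ast$ and composing with the inverse of the isometric equivalence \texttt{InnerProductSpace.toDual}.

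Next I would prove that $A$ is bounded below. Coercivity gives
\begin{equation*}
\beta\|u\|^2 \leq B[u,u] = \langle Au, u\rangle \leq \|Au\|\,\|u\|,
\end{equation*}
and hence $\|Au\| \geq \beta\|u\|$. This yields injectivity at once, and so uniqueness of the solution.

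Two further facts follow from the lower bound. First, the range $R(A)$ is closed. If $Au_k \to y$, then $(Au_k)$ is Cauchy, so $(u_k)$ is Cauchy by the lower bound, converges to some $u$, and continuity gives $Au = y$. Second, $R(A)$ is dense. If $z \perp R(A)$, then $0 = \langle Az, z\rangle = B[z,z] \geq \beta\|z\|^2$, so $z = 0$. A closed subspace whose orthogonal complement is trivial is all of $H$, so $A$ is surjective.

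The hard part will be the closed-range and surjectivity step, not the estimates. In Lean I would first look for existing Mathlib lemmas: an antilipschitz continuous linear map has closed range, and \texttt{Submodule.eq\_top\_iff\_orthogonal}, or a similar statement that a closed subspace is $\top$ iff its orthogonal complement is $\bot$. Failing that, I would run the Cauchy sequence argument above by hand. Mathlib may already contain the whole result, for instance \texttt{IsCoercive.continuousLinearEquivOfBilin}. If so, the proof reduces to converting the hypothesis $B[u,u] \geq \beta\|u\|^2$ into Mathlib's \texttt{IsCoercive} predicate and translating the conclusion back through \texttt{toDual}.
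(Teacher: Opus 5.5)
Your proposal is correct and follows essentially the same route as the paper: you represent $B[u,\cdot\,]$ via Riesz by an operator $A$, use coercivity to get $\beta\|u\| \leq \|Au\|$ (hence injectivity and closed range), apply the same bound on $R(A)^\perp$ for surjectivity, and take $u = A^{-1}g$ with $g$ the Riesz representative of $f$. Your remark that Mathlib's \texttt{IsCoercive.continuousLinearEquivOfBilin} already packages this also matches the paper, which states the result with \texttt{IsCoercive} and cites that declaration as the Mathlib ingredient.
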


\paragraph{\leandecl{lax_milgram}}
The Lean theorem is stated as
\begin{quote}
\begin{LeanCode}
theorem lax_milgram {H : Type*} [NormedAddCommGroup H] [InnerProductSpace ℝ H]
    [CompleteSpace H] {B : H →L[ℝ] H →L[ℝ] ℝ} (hB : IsCoercive B) (f : H →L[ℝ] ℝ) :
    ∃! u : H, ∀ v : H, B u v = f v
\end{LeanCode}
\end{quote}

\begin{proof}[Proof sketch]
    Since $B$ is bounded, $B[u, \cdot\,]$ is a continuous linear functional for each $u \in H$.
    The Riesz representation theorem therefore yields a bounded linear operator
    $A \colon H \to H$ with $\langle A u, v \rangle = B[u, v]$ for every $v \in H$.
    By coercivity, $\beta \| u \|_H^2 \leq B[u, u] = \langle Au, u \rangle
    \leq \| A u \|_H \| u \|_H$, so $A$ is injective with closed range.
    Applying the same bound on the orthogonal complement of the range, we deduce
    that $A$ is surjective. If $g$ denotes the Riesz representative of $f$, the
    solution is $u = A^{-1} g$. A second solution $u'$ satisfies
    $\langle A u', w \rangle = \langle g, w \rangle$ for every $w$, whence
    $A u' = A u$ and $u' = u$. No hypothesis is imposed on $B$ apart from
    boundedness and coercivity, which the Lean statement expresses respectively
    by the continuity built into the type of $B$ and by Mathlib's predicate
    \texttt{IsCoercive}, as elsewhere in the library. The solution satisfies
    $\| u \|_H \leq \beta^{-1} \| f \|_{H^{\ast}}$, as follows by combining
    coercivity with $B[u, u] = \langle f, u \rangle \leq \| f \|_{H^{\ast}}
    \| u \|_H$. Each a priori bound below is this estimate applied with the
    coercivity constant of the form in
    question.\footnote{\leandecl{EllipticPdes.lax_milgram}, with the estimate
    \leandecl{EllipticPdes.norm_weak_solution_le}.}
\end{proof}

We first apply Theorem~\ref{thm: lax milgram} to the Dirichlet problem for the
Laplacian.

\begin{corollary}[{Weak solvability of the Poisson equation, \cite[\S 6.2.2]{evans-2010-par-dif-equ}}]\label{cor: dirichlet weak solution}
    Let $\Omega \subseteq \mathbb{R}^n$ and let $C \geq 0$ satisfy the
    test-function bound $\| \varphi \|_{L^2(\Omega)}^2 \leq C \sum_{i = 1}^n
    \| \partial_i \varphi \|_{L^2(\Omega)}^2$ for every $\varphi \in C^\infty_c(\Omega)$. Then, for every $f \in H^{-1}(\Omega)$ there is a unique
    $u \in H_0^1(\Omega)$ with $ \sum_{i = 1}^n \langle \partial_i u, \partial_i v \rangle_{L^2(\Omega)} = f(v)$ for every $v \in H_0^1(\Omega)$.
\end{corollary}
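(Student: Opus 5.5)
The plan is to apply Theorem~\ref{thm: lax milgram} on the Hilbert space $H = H_0^1(\Omega)$ to the Dirichlet form
\begin{equation*}
    B[u, v] = \sum_{i=1}^n \langle \partial_i u, \partial_i v \rangle_{L^2(\Omega)} .
\end{equation*}
The space $H_0^1(\Omega)$ carries the norm $\|u\|_{H^1}^2 = \|u\|_{L^2}^2 + \sum_i \|\partial_i u\|_{L^2}^2$, and the functional $f \in H^{-1}(\Omega)$ is regarded as a continuous linear functional on it. First, I would check that $B$ is a bounded bilinear form. Bilinearity is immediate. Boundedness follows from the Cauchy--Schwarz inequality in $L^2(\Omega)$ and then in $\mathbb{R}^n$:
\begin{equation*}
    |B[u,v]| \leq \Bigl(\sum_i \|\partial_i u\|^2\Bigr)^{1/2} \Bigl(\sum_i \|\partial_i v\|^2\Bigr)^{1/2} \leq \|u\|_{H^1}\|v\|_{H^1}.
\end{equation*}
In Lean this packages $B$ as an element of \texttt{H →L[ℝ] H →L[ℝ] ℝ}. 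I would build it by composing the continuous linear coordinate projections $U \mapsto U\,i.\mathrm{succ}$ with the inner product, which gives continuity for free.

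Second, I would prove coercivity. I would extend the hypothesised test-function bound from $C_c^\infty(\Omega)$ to all of $H_0^1(\Omega)$ by density, since $H_0^1(\Omega)$ is by definition the closure of test functions in the graph norm. The two sides $\|u\|_{L^2}^2$ and $C\sum_i \|\partial_i u\|_{L^2}^2$ are continuous in the $H^1$ norm, and the set where a non-strict inequality between continuous functions holds is closed. With the bound on $H_0^1(\Omega)$ in hand, I would write
\begin{equation*}
    \|u\|_{H^1}^2 = \|u\|^2 + \sum_i \|\partial_i u\|^2 \leq (1 + C)\, B[u,u],
\end{equation*}
so that $B[u,u] \geq (1+C)^{-1}\|u\|_{H^1}^2$, giving Mathlib's \texttt{IsCoercive} with constant $\beta = (1+C)^{-1} > 0$. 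The point of choosing $1 + C$ is that the statement allows $C = 0$ and places no boundedness assumption on $\Omega$. This is also why the hypothesis is phrased as a bound rather than invoking Theorem~\ref{thm: poincare}, though for bounded $\Omega$ that theorem supplies such a $C$.

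Third, I would apply Theorem~\ref{thm: lax milgram} to get the unique $u$ with $B[u,v] = f(v)$ for all $v$, which is exactly the conclusion; uniqueness transfers directly. The main obstacle I expect is the density step: one has to set up the encoding of $H_0^1(\Omega)$ as a closed subspace of the graph space \texttt{H1amb} so that the closure argument applies. Completeness of $H_0^1(\Omega)$ as a closed subspace of a complete space is needed for the \texttt{CompleteSpace} instance in Theorem~\ref{thm: lax milgram}. I would first try to obtain this from closedness via \texttt{IsClosed.completeSpace\_coe}, then run the inequality extension with \texttt{closure\_minimal} applied to the closed set $\{U : \|U\,0\|^2 \leq C\sum_i \|U\,i.\mathrm{succ}\|^2\}$.
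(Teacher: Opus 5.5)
Your proposal is correct and follows the paper's proof: apply Lax--Milgram on $H_0^1(\Omega)$, with coercivity constant $\beta = (C+1)^{-1}$ obtained by extending the test-function bound to $H_0^1(\Omega)$ by density; the only difference is that your Cauchy--Schwarz step in $\mathbb{R}^n$ gives continuity constant $1$, where the paper records $n$. In the density step, \texttt{H01} is the closure of the \emph{span} of the test graphs. So before applying \texttt{closure\_minimal} you also need that this span consists of test graphs, which holds because test functions form a vector space and the graph map is linear.
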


\paragraph{\leandecl{poisson_weak_solution}}
The Lean theorem is stated as
\begin{quote}
\begin{LeanCode}
theorem poisson_weak_solution (Ω : Set (EuclideanSpace ℝ (Fin d))) (C : ℝ) (hC : 0 ≤ C)
    (hbase : ∀ {φ : EuclideanSpace ℝ (Fin d) → ℝ} (h : IsTestFn Ω φ),
      ‖(h.testGraph 0 : L2D Ω)‖ ^ 2 ≤ C * ∑ i : Fin d, ‖h.testGraph i.succ‖ ^ 2)
    (f : H01 Ω →L[ℝ] ℝ) :
    ∃! u : H01 Ω, ∀ v : H01 Ω, laplaceBilin Ω u v = f v
\end{LeanCode}
\end{quote}

\begin{proof}[Proof sketch]
    We apply Theorem~\ref{thm: lax milgram} to $B[u,v] =  \sum_{i = 1}^n \langle \partial_i u, \partial_i v \rangle_{L^2(\Omega)}$ with $H = H_0^1(\Omega)$. Since
    each of the $n$ terms of $B$ is bounded by the product of the norms
    by the Cauchy--Schwarz inequality, $|B[u, v]| \leq n \| u \|_{H_0^1(\Omega)} \| v \|_{H_0^1(\Omega)}$. Both $B[u, u] = \sum_{i = 1}^n \| \partial_i u \|^2_{L^2(\Omega)}$ and the assumed test-function bound, extended to
    $H_0^1(\Omega)$ by density, imply that
    $B[u, u] \geq (C + 1)^{-1} \| u \|_{H_0^1(\Omega)}^2$. We then conclude by applying Theorem~\ref{thm: lax milgram} with $\beta = (C + 1)^{-1}$. The formal statement takes the test-function bound as a hypothesis and imposes no condition on $\Omega$, so that it also applies to unbounded sets on which such a bound holds. The hypothesis enters the proof only through the coercivity estimate.\footnote{\leandecl{EllipticPdes.poisson_weak_solution}, with
    coercivity proved in \leandecl{laplaceBilin_coercive}.}
\end{proof}

To obtain coercivity for more general operators of the form \eqref{eqn: divergence form operator} we have:

\begin{theorem}[{G{\aa}rding inequality, \cite[\S 6.2.2,
Theorem~2]{evans-2010-par-dif-equ}}]\label{thm: garding}
    Let $\Omega \subseteq \mathbb{R}^n$. Let $L$ be the operator
    \eqref{eqn: divergence form operator} with ellipticity constant $\lambda$ as
    in \eqref{eqn: ellipticity}. If
    $\gamma = \lambda/2 + \|c\|_{L^\infty(\Omega)} + n\,(\max_i \|b^i\|_{L^\infty(\Omega)})^2/(2\lambda)$, then
    \begin{equation*}
        \tfrac{\lambda}{2}\|u\|_{H_0^1(\Omega)}^2 \leq B[u,u] + \gamma\|u\|_{L^2(\Omega)}^2
        \qquad \text{for all } u \in H_0^1(\Omega) .
    \end{equation*}
\end{theorem}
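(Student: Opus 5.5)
Write $B[u,u]$ as three terms,
\begin{equation*}
    B[u,u] = \int_\Omega \sum_{i,j} a^{ij} D_i u\, D_j u + \int_\Omega \sum_i b^i (D_i u)\, u + \int_\Omega c\, u^2,
\end{equation*}
and bound each one separately. Here $\|u\|_{H_0^1(\Omega)}^2 = \|u\|_{L^2(\Omega)}^2 + \sum_i \|\partial_i u\|_{L^2(\Omega)}^2$, as in the proof of Corollary~\ref{cor: dirichlet weak solution}.

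\emph{Principal term.} Apply the ellipticity condition \eqref{eqn: ellipticity} pointwise almost everywhere with $\xi = \nabla u(x)$ and integrate. This gives
\begin{equation*}
    \int_\Omega \sum_{i,j} a^{ij} D_i u\, D_j u \geq \lambda \|\nabla u\|_{L^2(\Omega)}^2 .
\end{equation*}

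\emph{Zeroth-order term.} Since $\|c\|_{L^\infty(\Omega)}$ bounds $|c|$ almost everywhere,
\begin{equation*}
    \int_\Omega c\, u^2 \geq -\|c\|_{L^\infty(\Omega)} \|u\|_{L^2(\Omega)}^2 .
\end{equation*}

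\emph{First-order term.} Let $\beta = \max_i \|b^i\|_{L^\infty(\Omega)}$. For each $i$,
\begin{equation*}
    \Bigl|\int_\Omega b^i (D_i u)\, u\Bigr| \leq \beta \|\partial_i u\|_{L^2(\Omega)} \|u\|_{L^2(\Omega)}
    \leq \tfrac{\lambda}{2n}\|\partial_i u\|_{L^2(\Omega)}^2 \cdot n\cdot\tfrac{1}{n} + \ldots
\end{equation*}
More precisely, use Young's inequality $ab \leq \tfrac{\varepsilon}{2} a^2 + \tfrac{1}{2\varepsilon} b^2$ with $\varepsilon = \lambda$. This gives
\begin{equation*}
    \beta \|\partial_i u\|\, \|u\| \leq \tfrac{\lambda}{2}\|\partial_i u\|^2 + \tfrac{\beta^2}{2\lambda}\|u\|^2 .
\end{equation*}
Summing over $i$ bounds the first-order term by $\tfrac{\lambda}{2}\|\nabla u\|^2 + \tfrac{n\beta^2}{2\lambda}\|u\|^2$. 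The factor $n$ here matches the one in $\gamma$.

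\emph{Assembly.} Combining the three bounds,
\begin{equation*}
    B[u,u] \geq \tfrac{\lambda}{2}\|\nabla u\|^2 - \bigl(\|c\|_{L^\infty} + \tfrac{n\beta^2}{2\lambda}\bigr)\|u\|^2 .
\end{equation*}
Adding $\gamma\|u\|^2$ to both sides gives
\begin{equation*}
    B[u,u] + \gamma\|u\|^2 \geq \tfrac{\lambda}{2}\bigl(\|\nabla u\|^2 + \|u\|^2\bigr) = \tfrac{\lambda}{2}\|u\|_{H_0^1(\Omega)}^2 .
\end{equation*}
This explains the summand $\lambda/2$ in $\gamma$. No Poincaré inequality is needed, which is consistent with the statement imposing no hypothesis on $\Omega$.

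\emph{Main obstacle.} The analytic content is elementary. The main difficulty is the measure-theoretic bookkeeping in Lean, in three places:
\begin{itemize}
    \item the pointwise ellipticity bound must be integrated using a.e.\ monotonicity of integrals, which requires integrability of each product $a^{ij} D_i u\, D_j u$ (an $L^\infty$ function times a product of two $L^2$ functions);
    \item $\|b^i\|_{L^\infty}$ must be extracted as an a.e.\ bound, via the essential supremum;
    \item the weak gradient of $u \in H_0^1(\Omega)$ must be identified with the components of its graph representative.
\end{itemize}
I would first prove a lemma bounding $\bigl|\int g\,v\,w\bigr| \leq \|g\|_{L^\infty}\|v\|_{L^2}\|w\|_{L^2}$ via Hölder's inequality, and reuse it for both lower-order terms.
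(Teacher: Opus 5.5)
Your proof is correct and follows the paper's route: ellipticity bounds the principal part below by $\lambda\|\nabla u\|_{L^2(\Omega)}^2$, the Peter--Paul (Young) inequality with $\varepsilon=\lambda$ absorbs the first-order terms at a cost of $n\beta^2/(2\lambda)$, the zeroth-order term is bounded by $\|c\|_{L^\infty(\Omega)}$, and the extra $\lambda/2$ in $\gamma$ turns $\tfrac{\lambda}{2}\|\nabla u\|^2$ into $\tfrac{\lambda}{2}\|u\|_{H_0^1(\Omega)}^2$. The only blemish is the garbled display just before ``More precisely''; delete it, since the Young step after it is the actual argument.
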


\paragraph{\leandecl{garding}}
The Lean theorem is stated as
\begin{quote}
\begin{LeanCode}
theorem garding (Ω : Set (EuclideanSpace ℝ (Fin d))) (U : H01 Ω) :
    Op.lam / 2 * ‖U‖ ^ 2
      ≤ Op.fullBilin Ω U U + Op.gardingγ * ‖(U : H1amb Ω) 0‖ ^ 2
\end{LeanCode}
\end{quote}

\begin{proof}[Proof sketch]
    
    One first shows that $B$ is bounded, which follows by the pointwise bounds on the coefficients\footnote{\leandecl{EllipticPdes.Sobolev.EllipticCoeff.bilin}
    whose bound is part of its construction.}. By ellipticity, for the principal part we see that $B_A[u,u] = \sum_{i,j} \langle a^{ij}\,\partial_i u, \partial_j u\rangle \geq \lambda \|\nabla u\|_{L^2(\Omega)}^2$\footnote{\leandecl{EllipticPdes.Sobolev.EllipticCoeff.bilin_coercive}.}. By the Peter--Paul inequality, the first order terms of $B$ are absorbed, and the
    zeroth-order terms are controlled pointwise by $\|c\|_{L^\infty(\Omega)}$. The desired inequality follows by using these facts on the difference between $B_A$ and $B$. Whereas the cited statement leaves $\gamma$ unspecified, the formal statement gives its value.\footnote{\leandecl{garding}
    which admits any almost-everywhere bounds on $b$ and $c$ in place of their
    essential suprema.}
\end{proof}
    
It follows from the proof above that for every $\mu \geq \gamma$ the bilinear form
$B_\mu = B + \mu\langle\cdot,\cdot\rangle_{L^2}$ is coercive, with Theorem~\ref{thm: lax milgram} then providing a unique weak solution $u \in H^1_0(\Omega)$ of the equation $Lu + \mu u = f$. If $b = 0$ and $c \geq 0$ on a bounded set $\Omega$ then $B$ is coercive and thus:

\begin{theorem}[{Existence I, \cite[\S 6.2.2, Theorem~3 and the Examples following it]{evans-2010-par-dif-equ}}]\label{thm: main}
    Let $\Omega \subseteq \mathbb{R}^n$ be a bounded set.
    Let $L$ be an operator of the form \eqref{eqn: divergence form operator} which is uniformly elliptic with constants $\lambda, \Lambda$ in the sense of
    \eqref{eqn: ellipticity}, with $b = 0$ almost everywhere, and with
    $c \geq 0$ almost everywhere. Let $C_P$ be the Poincar\'e constant of
    Theorem~\ref{thm: poincare} and set $\alpha = \lambda/(1 + C_P^2)$. Then for
    every $f \in L^2(\Omega)$ the problem \eqref{eqn: dirichlet problem} has a
    unique weak solution $u \in H_0^1(\Omega)$ with $\| u \|_{H_0^1(\Omega)}
    \leq \alpha^{-1} \| f \|_{L^2(\Omega)}$.
\end{theorem}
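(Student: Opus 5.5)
The plan is to apply Theorem~\ref{thm: lax milgram} on $H = H_0^1(\Omega)$ to the bilinear form $B$ of \eqref{eqn: bilinear form} and the functional $v \mapsto \langle f, v\rangle_{L^2(\Omega)}$. Boundedness of $B$ is already available from the construction used in the proof of Theorem~\ref{thm: garding}. The functional $v \mapsto \langle f, v\rangle_{L^2(\Omega)}$ is continuous on $H_0^1(\Omega)$, since
\[
|\langle f, v\rangle| \leq \|f\|_{L^2(\Omega)}\|v\|_{L^2(\Omega)} \leq \|f\|_{L^2(\Omega)}\|v\|_{H_0^1(\Omega)}.
\]
Here the $H_0^1$ norm squared is $\|v\|_{L^2}^2 + \|\nabla v\|_{L^2}^2$, the graph norm used throughout. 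Its dual norm is therefore at most $\|f\|_{L^2(\Omega)}$. The remaining work is coercivity with the explicit constant $\alpha = \lambda/(1+C_P^2)$.

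For coercivity, take $u \in H_0^1(\Omega)$. Since $b = 0$ a.e., the first-order terms of $B[u,u]$ vanish. Since $c \geq 0$ a.e., we have $\int_\Omega c\,u^2 \geq 0$. The principal-part estimate from the proof of Theorem~\ref{thm: garding} then gives
\[
B[u,u] \geq B_A[u,u] \geq \lambda\|\nabla u\|_{L^2(\Omega)}^2.
\]
Because $\Omega$ is bounded, Theorem~\ref{thm: poincare} gives $\|u\|_{L^2}^2 \leq C_P^2\|\nabla u\|_{L^2}^2$. Hence
\[
\|u\|_{H_0^1}^2 \leq (1+C_P^2)\|\nabla u\|_{L^2}^2,
\]
and so $B[u,u] \geq \frac{\lambda}{1+C_P^2}\|u\|_{H_0^1}^2 = \alpha\|u\|_{H_0^1}^2$. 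In Lean I would take $C = C_P^2$ directly from the squared form of \leandecl{poincare_H01_of_bounded}, so that $\alpha = \lambda/(1+C)$.

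Theorem~\ref{thm: lax milgram} then yields a unique $u$ with $B[u,v] = \langle f,v\rangle_{L^2(\Omega)}$ for all $v \in H_0^1(\Omega)$. This is exactly the weak formulation \eqref{eqn: weak form}. For the estimate, test with $v = u$:
\[
\alpha\|u\|_{H_0^1}^2 \leq B[u,u] = \langle f,u\rangle \leq \|f\|_{L^2}\|u\|_{H_0^1}.
\]
Dividing by $\|u\|_{H_0^1}$, or noting the case $u=0$ is trivial, gives $\|u\|_{H_0^1(\Omega)} \leq \alpha^{-1}\|f\|_{L^2(\Omega)}$. Equivalently, this is \leandecl{EllipticPdes.norm_weak_solution_le} combined with the dual-norm bound above.

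The main obstacle I expect is the sign and vanishing arguments inside the full bilinear form. Showing that $\int b^i (D_i u)\, u = 0$ and $\int c\,u^2 \geq 0$ requires working with the almost-everywhere hypotheses at the level of $L^2$ representatives. The first-order part must be split off from the principal part in the concrete definition of \texttt{fullBilin}. I would first try to prove a lemma of the form $B[u,u] = B_A[u,u] + \int c\,u^2$ under $b = 0$ a.e., using \texttt{integral\_congr\_ae}. The sign of the zeroth-order term would then follow from \texttt{integral\_nonneg\_of\_ae}. Matching the norm conventions, so that $\|u\|_{H_0^1}^2$ is literally the sum of the component norms squared, is routine but must be checked against the definition of \texttt{H01}.
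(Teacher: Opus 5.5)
Your proposal is correct and follows essentially the same route as the paper. You apply Lax--Milgram on $H_0^1(\Omega)$ with coercivity constant $\alpha=\lambda/(1+C_P^2)$, obtained from the ellipticity lower bound (after discarding the vanishing first-order term and the nonnegative zeroth-order term) together with the squared Poincar\'e inequality, and you get the a priori bound from the Lax--Milgram estimate combined with $\|v\mapsto\int_\Omega fv\|\le\|f\|_{L^2(\Omega)}$ by Cauchy--Schwarz, just as the paper does, including its remark that the Lean statement takes $C=C_P^2$ and writes $\alpha^{-1}$ as $(C_P^2+1)/\lambda$.
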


\paragraph{\leandecl{weak_solution_L2_of_nonneg_zeroth_of_bounded}}
The Lean theorem is stated as
\begin{quote}
\begin{LeanCode}
theorem weak_solution_L2_of_nonneg_zeroth_of_bounded {n : ℕ}
    (Op : FullEllipticOp (n + 1)) {Ω : Set (EuclideanSpace ℝ (Fin (n + 1)))}
    (hΩb : Bornology.IsBounded Ω)
    (hb : ∀ i, ∀ᵐ x ∂(volume.restrict Ω), Op.b x i = 0)
    (hc : ∀ᵐ x ∂(volume.restrict Ω), 0 ≤ Op.c x) :
    ∃ CP : ℝ, 0 ≤ CP ∧ ∀ f : L2D Ω,
      ((∃! u : H01 Ω, ∀ v : H01 Ω,
        Op.fullBilin Ω u v = ∫ x in Ω, (f x : ℝ) * ((v : H1amb Ω) 0 x : ℝ))
      ∧ ∀ u : H01 Ω,
          (∀ v : H01 Ω,
            Op.fullBilin Ω u v = ∫ x in Ω, (f x : ℝ) * ((v : H1amb Ω) 0 x : ℝ)) →
            ‖u‖ ≤ (CP + 1) / Op.lam * ‖f‖)
\end{LeanCode}
\end{quote}

\begin{proof}[Proof sketch]
    The Lean statement names the constant of the squared Poincar\'e inequality as $C_P^2$ and therefore writes $\alpha^{-1}$ as
    $(C_P^2 + 1)/\lambda$. The coercivity
    argument requires the inequality on all of $H_0^1(\Omega)$. Let
    $\Lambda_f \in H_0^1(\Omega)^{\ast}$ be the functional
    $v \mapsto \int_\Omega f v$. We apply Theorem~\ref{thm: lax milgram} to
    $B$ on $H_0^1(\Omega)$. Boundedness follows from the upper bound $\Lambda$
    on $A$ and the bounds on the suprema of $b$ and $c$. Since $b = 0$ and
    $c \geq 0$ almost everywhere, coercivity of $B$ with constant $\alpha$ follows from the lower bound $\lambda$ together with the
    Poincar\'e inequality.
    Theorem~\ref{thm: lax milgram} then yields a unique $u$ with
    $B[u, v] = \Lambda_f(v)$ for every $v \in H^1_0(\Omega)$, which is the integral form of the
    statement. Every solution then satisfies
    $\| u \| \leq \alpha^{-1} \| \Lambda_f \|$. Since the Cauchy--Schwarz
    inequality gives $\| \Lambda_f \| \leq \| f \|_{L^2(\Omega)}$, the
    estimate follows. A sharper constant obtained
    from the geometry of a particular $\Omega$, such as the Friedrichs constant
    $C_P \leq d/\pi$ of a convex domain of diameter $d$, may be used in the
    abstract statement without
    change.\footnote{\leandecl{EllipticPdes.Sobolev.FullEllipticOp.weak_solution_L2_of_nonneg_zeroth_of_bounded},
    based on \leandecl{weak_solution_of_nonneg_zeroth} and
    \leandecl{weak_solution_of_nonneg_zeroth_bound}.}
\end{proof}

For solvability in the absence of coercivity we have:

\begin{theorem}[{Existence II, \cite[\S 6.2.3, Theorem~4(i)]{evans-2010-par-dif-equ}}]\label{thm: fredholm}
    Let $\Omega \subseteq \mathbb{R}^n$ be bounded and measurable and let $L$
    be the operator \eqref{eqn: divergence form operator}, uniformly elliptic
    in the sense of \eqref{eqn: ellipticity}. Then either the homogeneous Dirichlet
    problem ($f = 0$) has a non-zero weak solution $u \in H_0^1(\Omega)$, or for every $f \in H^{-1}(\Omega)$ the problem $Lu = f$ has a
    unique weak solution $u \in H_0^1(\Omega)$.
\end{theorem}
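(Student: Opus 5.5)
We follow the standard route of \cite[\S 6.2.3]{evans-2010-par-dif-equ}: shift the operator to make it coercive, invert the shifted problem by Lax--Milgram, and reduce $Lu = f$ to an equation of the form $(I - K)u = h$ with $K$ compact. Theorem~\ref{thm: garding} gives $\gamma \geq 0$ such that $B_\gamma = B + \gamma\langle\cdot,\cdot\rangle_{L^2}$ satisfies $B_\gamma[u,u] \geq \tfrac{\lambda}{2}\|u\|_{H_0^1(\Omega)}^2$. Boundedness of $B_\gamma$ follows from the coefficient bounds. Hence Theorem~\ref{thm: lax milgram} applies on $H = H_0^1(\Omega)$ and produces a bounded solution operator $S \colon H^{-1}(\Omega) \to H_0^1(\Omega)$: for each $g$, $Sg$ is the unique $w$ with $B_\gamma[w,v] = g(v)$ for all $v$, and $\|Sg\| \leq 2\lambda^{-1}\|g\|$. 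Linearity of $S$ follows from uniqueness.

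Since $B = B_\gamma - \gamma\langle\cdot,\cdot\rangle_{L^2}$, a function $u \in H_0^1(\Omega)$ solves $B[u,v] = f(v)$ for all $v$ if and only if
\[
u = S\bigl(f + \gamma\,\iota^{\ast}\iota u\bigr).
\]
Here $\iota \colon H_0^1(\Omega) \to L^2(\Omega)$ is the inclusion and $\iota^{\ast}\iota u$ is the functional $v \mapsto \langle u, v\rangle_{L^2}$. Setting $K = \gamma\, S\,\iota^{\ast}\iota \colon H_0^1(\Omega) \to H_0^1(\Omega)$, the problem becomes $(I - K)u = Sf$. The map $I-K$ is an injective (respectively surjective) operator exactly when the homogeneous problem has only the trivial solution (respectively every $f$ is solvable). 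The second equivalence holds because $S$ is bijective onto $H_0^1(\Omega)$, being the inverse of the isomorphism $w \mapsto B_\gamma[w,\cdot]$.

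The key analytic input is compactness of $K$. For this we invoke Rellich--Kondrachov compactness of $\iota \colon H_0^1(\Omega) \hookrightarrow L^2(\Omega)$ on bounded $\Omega$, which the library provides (listed in the abstract). It may be stated only for boxes; in that case we extend by zero into an enclosing box, exactly as in the proof of Theorem~\ref{thm: poincare}. Since $\iota^{\ast}$ and $S$ are bounded, $K$ is compact. We then apply the Fredholm alternative for compact operators on a Hilbert space: $I - K$ is injective if and only if it is surjective. If Mathlib only provides this via the spectral theory of compact operators (e.g.\ $\ker(I-K)=0 \Rightarrow$ closed range and the Riesz lemma argument), we prove the injective $\Rightarrow$ surjective direction directly. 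We would first show that the range of $I-K$ is closed, by a contradiction argument with a normalised sequence and a convergent subsequence of $K u_k$. We would then show that the decreasing chain $(I-K)^k H$ stabilises, again using compactness.

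The dichotomy then follows. If the homogeneous problem has a nonzero solution, we are in the first alternative. Otherwise $I-K$ is injective, hence bijective, so for each $f$ there is a unique $u$ with $(I-K)u = Sf$, which is the unique weak solution. We expect the main obstacle to be the abstract Fredholm alternative (injective implies surjective for $I - K$ with $K$ compact) if it is not already available in Mathlib. The bookkeeping identifying $H^{-1}(\Omega)$ with $H_0^1(\Omega)^{\ast}$ and transporting compactness of the embedding to the subspace $H_0^1(\Omega)$ of the ambient graph space is secondary, though tedious.
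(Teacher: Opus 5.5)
Your proposal is correct and follows essentially the same route as the paper. Both use the G{\aa}rding shift, the Lax--Milgram inversion of $B_\gamma$, the factorisation $\mathcal{A} = E(1-K)$ with $K = \gamma E^{-1}\iota^{\ast}\iota$ compact by Rellich--Kondrachov, and the Fredholm dichotomy for $1-K$. The only differences are cosmetic: you write the solution operator as $S \colon H^{-1}(\Omega) \to H_0^1(\Omega)$ where the paper uses Riesz representatives as operators on $H_0^1(\Omega)$, and your box-extension fallback is unnecessary because the library proves the compact embedding directly on every bounded set.
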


\paragraph{\leandecl{FullEllipticOp.fredholm_alternative_of_bounded}}
The Lean theorem is stated as
\begin{quote}
\begin{LeanCode}
theorem FullEllipticOp.fredholm_alternative_of_bounded (Op : FullEllipticOp d)
    (Ω : Set (EuclideanSpace ℝ (Fin d))) (hΩm : MeasurableSet Ω)
    (hΩb : Bornology.IsBounded Ω) :
    (∃ u : H01 Ω, u ≠ 0 ∧ ∀ v : H01 Ω, Op.fullBilin Ω u v = 0)
      ∨ (∀ f : H01 Ω →L[ℝ] ℝ, ∃! u : H01 Ω, ∀ v : H01 Ω, Op.fullBilin Ω u v = f v)
\end{LeanCode}
\end{quote}

\begin{proof}[Proof sketch]
    
    The library proves the Rellich--Kondrachov theorem, that is, the compactness of
    the embedding $\iota \colon H_0^1(\Omega) \hookrightarrow L^2(\Omega)$, on every $\Omega \subseteq \mathbb{R}^n$ which is bounded\footnote{\leandecl{EllipticPdes.Sobolev.embW12_isCompact}}. We use three bounded operators on $H_0^1(\Omega)$: write $\mathcal{A}$ for the
    Riesz representative of $B$, $E$ for the Riesz
    representative of $B_\gamma = B + \gamma\langle
    \cdot,\cdot\rangle_{L^2(\Omega)}$, and $T = \iota^\ast
    \iota$. Since $B_\gamma$ is coercive, $E$ is invertible by
    Theorem~\ref{thm: lax milgram}. Since $\mathcal{A} = E - \gamma T$ we write
    \begin{equation}\label{eqn: fredholm factorisation}
        \mathcal{A} = E\,(1 - K),
        \qquad K = \gamma\, E^{-1} T ,
    \end{equation}
    where $K$ is compact since $T = \iota^\ast
    \iota$.\footnote{\leandecl{opA}, \leandecl{opT}, \leandecl{opE}, \leandecl{opK},
    \leandecl{opA_factor}.}
    
    By \eqref{eqn: fredholm factorisation}, weak solvability of (\ref{eqn: dirichlet problem}) reduces to solving
    $(1 - K)u = h$ with $h = E^{-1}g$ and $g$ the Riesz representative
    of $f$. Since $K$ is compact, $1$
    is either an eigenvalue of $K$ or a point of its resolvent set. If
    $Kx = x$ for some $x \neq 0$, then $\mathcal{A}x = E(1 - K)x = 0$, so 
    $B[x, v] = \langle \mathcal{A}x, v \rangle = 0$ for every $v\in H^1_0(\Omega)$ and $x$ is a
    non-zero solution of the homogeneous problem. On the other hand, if $1$ lies in the
    resolvent set of $K$, then $1 - K$ is bijective, so that
    $\mathcal{A} = E(1 - K)$ is bijective as the composition of two
    bijections. We then see that $\mathcal{A}u = g$ holds exactly when
    $B[u, v] = f(v)$ for every $v \in H^1_0(\Omega)$, so the inhomogeneous problem has
    a unique weak solution for every $f \in H^{-1}$. Evans takes $f \in L^2(\Omega)$,
    whereas the formal statement admits every element of the dual space and
    asserts the disjunction of the two alternatives without their mutual exclusion.\footnote{\leandecl{EllipticPdes.Sobolev.FullEllipticOp.fredholm_alternative_of_bounded},
    based on \leandecl{fredholm_alternative} with the compactness of $K$ as a
    hypothesis and on \leandecl{embL2_isCompact}.}

    The library also formalises a characterisation of $H^{-1}$ functions so that the data in the solvability statements (here and the one below) may be presented as a tuple of $L^2$
    functions: if $\Omega \subseteq \mathbb{R}^n$ is open, every $f \in
    H^{-1}(\Omega)$ is represented as $\langle f, v \rangle = \int_\Omega f_0 v -
    \sum_{i=1}^n \int_\Omega f_i D_i v$ by functions $f_0, \dotsc, f_n \in
    L^2(\Omega)$ with $\|f\|_{H^{-1}(\Omega)} = (\int_\Omega \sum_{i=0}^n
    |f_i|^2)^{1/2}$.\footnote{\leandecl{EllipticPdes.hneg_characterization}.}
\end{proof}

If we write
$N = \{u \in H_0^1(\Omega) : B[u, v] = 0 \text{ for all } v\}$ and
$N^{\ast} = \{u \in H_0^1(\Omega) : B[v, u] = 0 \text{ for all } v\}$ for the spaces of homogeneous solutions to the Dirichlet problem for $L$ and its formal adjoint $L^*$:

\begin{theorem}[{Existence II, \cite[\S 6.2.3,
Theorem~4(ii)+(iii)]{evans-2010-par-dif-equ}}]\label{thm: fredholm complete}
    With the same hypotheses as Theorem \ref{thm: fredholm}:
    \begin{enumerate}
        \item The space $N$  is finite-dimensional;
        \item $\dim N^{\ast} = \dim N$;
        \item For every $f \in H^{-1}(\Omega)$, the problem $Lu = f$ has a weak
        solution if and only if $\langle f, w \rangle_{L^2(\Omega)} = 0$ for every $w \in
        N^{\ast}$.
    \end{enumerate}
\end{theorem}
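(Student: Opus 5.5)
We reuse the factorisation \eqref{eqn: fredholm factorisation} from the proof of Theorem~\ref{thm: fredholm}. Write $\mathcal{A} = E(1-K)$ on $H_0^1(\Omega)$, where $E$ is the invertible Riesz representative of $B_\gamma$ and $K = \gamma E^{-1}T$ is compact by the Rellich--Kondrachov theorem. Since $E$ is injective, $N = \ker \mathcal{A} = \ker(1-K)$. For the adjoint side, note that $B[v,u] = \langle \mathcal{A}v, u\rangle = \langle v, \mathcal{A}^\ast u\rangle$, so $N^\ast = \ker \mathcal{A}^\ast$. We also have $\mathcal{A}^\ast = (1-K^\ast)E^\ast$, where $E^\ast$ is the Riesz representative of the transposed form $B_\gamma[v,u]$. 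That form is again coercive, so $E^\ast$ is invertible, and therefore $N^\ast = (E^\ast)^{-1}\ker(1-K^\ast)$.

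The plan is then to invoke the abstract Fredholm theory for a compact operator $K$ on a Hilbert space:
\begin{itemize}
\item[(a)] $\ker(1-K)$ is finite-dimensional;
\item[(b)] $\operatorname{ran}(1-K)$ is closed and equals $\ker(1-K^\ast)^\perp$;
\item[(c)] $\dim\ker(1-K) = \dim\ker(1-K^\ast)$.
\end{itemize}
Item (a) gives part~1 directly. Item (c) combined with the isomorphism $(E^\ast)^{-1}$ gives part~2. For (a) I would argue that the unit ball of $\ker(1-K)$ equals its image under $K$, hence is relatively compact, and then apply Riesz's lemma. For (b) I would show that $1-K$ is bounded below on $\ker(1-K)^\perp$ by a compactness–contradiction argument, which gives closed range, and then use $\operatorname{ran}(S)^{\perp} = \ker S^\ast$ for $S = 1-K$. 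Part~3 follows from (b). The problem $Lu=f$ is solvable iff $g \in \operatorname{ran}\mathcal{A}$, where $g$ is the Riesz representative of $f$. By the factorisation, $\operatorname{ran}\mathcal{A} = E\,\operatorname{ran}(1-K)$, which is closed, and its orthogonal complement is $\ker \mathcal{A}^\ast = N^\ast$. So solvability holds iff $f(w) = \langle g,w\rangle = 0$ for all $w\in N^\ast$. Here I read the statement's pairing $\langle f, w\rangle$ as the duality pairing, which coincides with the $L^2$ pairing when $f\in L^2(\Omega)$.

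The main obstacle is (c), the equality of dimensions. I would first try the standard argument from Evans/Brezis. Suppose $\dim\ker(1-K) < \dim\ker(1-K^\ast)$. Take a linear map $P\colon \ker(1-K)\to \ker(1-K^\ast)$ that is injective but not surjective, extend it by zero on $\ker(1-K)^\perp$, and note that $K' = K + P$ is still compact (it has finite-rank perturbation). Then $1-K'$ is injective, since the ranges of $1-K$ and $P$ are orthogonal by (b). By the Fredholm alternative already used for Theorem~\ref{thm: fredholm} (injective implies surjective for $1$ minus a compact operator), $1-K'$ is surjective. But any $w \in \ker(1-K^\ast)\setminus \operatorname{ran}P$ is not in the range of $1-K'$, a contradiction. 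Applying the same argument with $K^\ast$, which is compact by Schauder's theorem (immediate in Hilbert space via $T^\ast = T$), gives the reverse inequality. Formally, the delicate points are constructing $P$ from chosen bases and proving that finite-rank operators are compact. If this proves awkward, a fallback is to use the fact that $1-K$ has index zero, i.e.\ to compare $\operatorname{codim}\operatorname{ran}(1-K)$ with $\dim\ker(1-K)$ through a Riesz decomposition. I expect the perturbation argument to be the shorter route.
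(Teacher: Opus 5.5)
Your proposal is correct and follows the paper's route. The factorisation $\mathcal{A} = E(1-K)$ identifies $N$ with the eigenspace $\ker(1-K)$ and $N^\ast$ with $(E^\ast)^{-1}\ker(1-K^\ast)$. The solvability criterion is the closed-range property of $1-K$, and the dimension identity is the index-zero (Fredholm index) statement for $1-K$. The paper only cites the Riesz theory of compact perturbations of the identity for these facts, whereas you also sketch the standard proofs (Riesz's lemma, the bounded-below argument, and the finite-rank perturbation argument), and these are sound.
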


\paragraph{\leandecl{solvable_iff_orthogonal_solSpaceStar}}
The Lean theorem is stated as
\begin{quote}
\begin{LeanCode}
theorem solvable_iff_orthogonal_solSpaceStar (hK : IsCompactOperator (Op.opK Ω))
    (f : H01 Ω →L[ℝ] ℝ) :
    (∃ u : H01 Ω, ∀ v : H01 Ω, Op.fullBilin Ω u v = f v)
      ↔ ∀ w ∈ Op.solSpaceStar Ω, f w = 0
\end{LeanCode}
\end{quote}

\begin{proof}[Proof sketch]
    All three clauses follow from the Riesz theory of compact perturbations of
    the identity applied to the relation \eqref{eqn: fredholm factorisation}. The space $N$
    is the eigenspace of $K$ at the eigenvalue $1$, the identity of dimensions
    is the Fredholm index theorem in the Hilbert space setting and the solvability
    criterion is the closed-range property of $1 - K$. Recall that $L^{\ast}$ denotes the formal
    adjoint defined by
    \[
        L^{\ast} v = - D_i \bigl( a^{ij} D_j v \bigr) - b^i D_i v
            + \bigl( c - D_i b^i \bigr) v ,
    \]
    whose bilinear form $B^*$ satisfies $B[u, v] = B^{\ast}[v, u]$. The formal
    statement places the compactness hypothesis on $K$, which follows from the
    compactness of $T =\iota^\ast\iota$.\footnote{\leandecl{EllipticPdes.Sobolev.FullEllipticOp.finiteDimensional_solSpace},
    \leandecl{EllipticPdes.Sobolev.FullEllipticOp.finrank_solSpaceStar_eq_finrank_solSpace}
    and
    \leandecl{EllipticPdes.Sobolev.FullEllipticOp.solvable_iff_orthogonal_solSpaceStar}.}
\end{proof}

The results stated above hold for arbitrary second order coefficients with no assumption of symmetry; only the spectral results below require these coefficients to be symmetric. We have the variational characterisation of the first eigenvalue via the Rayleigh quotient:

\begin{theorem}[{Variational principle for the principal eigenvalue,
\cite[\S 6.5.1, Theorem~2]{evans-2010-par-dif-equ}}]\label{thm: principal eigenvalue}
    Let $B$ be symmetric and coercive on $H_0^1(\Omega)$ with the embedding
    $H_0^1(\Omega) \hookrightarrow L^2(\Omega)$ compact. Set
    \[
        \lambda_1 = \inf \bigl\{ B[u, u] \ :\ u \in H_0^1(\Omega),\
            \| u \|_{L^2(\Omega)} = 1 \bigr\} .
    \]
    Then, $\lambda_1 > 0$, and the infimum is attained at some $u$ with
    $\| u \|_{L^2(\Omega)} = 1$ satisfying
    \[
        B[u, v] = \lambda_1 \langle u, v \rangle_{L^2(\Omega)}
        \qquad \text{for all }  v \in H_0^1(\Omega) .
    \]
\end{theorem}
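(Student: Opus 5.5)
We would prove the statement by the direct method of the calculus of variations, using the compactness of the embedding $\iota\colon H_0^1(\Omega)\hookrightarrow L^2(\Omega)$ together with coercivity. Let $\beta>0$ denote the coercivity constant, so $B[u,u]\ge \beta\|u\|_{H_0^1(\Omega)}^2$.

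\emph{Positivity of $\lambda_1$.} Since $\|u\|_{L^2(\Omega)}\le\|u\|_{H_0^1(\Omega)}$ (the $H_0^1$ norm dominates the $L^2$ component), every admissible $u$ with $\|u\|_{L^2(\Omega)}=1$ satisfies $B[u,u]\ge\beta\|u\|_{H_0^1(\Omega)}^2\ge\beta$. Hence $\lambda_1\ge\beta>0$. The admissible set is nonempty, assuming $H_0^1(\Omega)$ does not collapse to $\{0\}$ in $L^2$; this must be a hypothesis, or follow from $\Omega$ having positive measure. If the set were empty the infimum would be $+\infty$ and attainment would fail, so this nontriviality hypothesis has to be checked against the formal statement.

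\emph{Attainment.} Take a minimising sequence $u_k$ with $\|u_k\|_{L^2(\Omega)}=1$ and $B[u_k,u_k]\to\lambda_1$. By coercivity, $(u_k)$ is bounded in $H_0^1(\Omega)$. Since $\iota$ is compact, a subsequence satisfies $\iota u_k\to w$ in $L^2(\Omega)$, and hence $\|w\|_{L^2(\Omega)}=1$.

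To obtain a limit $u\in H_0^1(\Omega)$ with $\iota u=w$ and $B[u,u]\le\liminf B[u_k,u_k]$, rather than invoking weak compactness and weak lower semicontinuity abstractly, we would use a parallelogram argument, which is more elementary and more convenient in Lean. By symmetry of $B$,
\[
B\Bigl[\tfrac{u_k-u_m}{2},\tfrac{u_k-u_m}{2}\Bigr] = \tfrac12 B[u_k,u_k]+\tfrac12 B[u_m,u_m]-B\Bigl[\tfrac{u_k+u_m}{2},\tfrac{u_k+u_m}{2}\Bigr].
\]
Since $\bigl\|\tfrac{u_k+u_m}{2}\bigr\|_{L^2(\Omega)}\to 1$ as $k,m\to\infty$ (the $L^2$ limits agree), homogeneity gives $B\bigl[\tfrac{u_k+u_m}{2},\tfrac{u_k+u_m}{2}\bigr]\ge\lambda_1\bigl\|\tfrac{u_k+u_m}{2}\bigr\|_{L^2(\Omega)}^2\to\lambda_1$. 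The right-hand side therefore tends to $0$, and coercivity makes $(u_k)$ Cauchy in $H_0^1(\Omega)$. By completeness $u_k\to u$ in norm. Continuity of $B$ and of $\iota$ then give $B[u,u]=\lambda_1$ and $\|u\|_{L^2(\Omega)}=1$. In this form compactness is used only to make the $L^2$ norms of the midpoints converge.

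\emph{Euler--Lagrange equation.} For fixed $v$, consider
\[
g(t)=B[u+tv,u+tv]-\lambda_1\|u+tv\|_{L^2(\Omega)}^2.
\]
By the definition of $\lambda_1$ and homogeneity, $g\ge 0$, and $g(0)=0$. Expanding by symmetry,
\[
g(t)=2t\bigl(B[u,v]-\lambda_1\langle u,v\rangle_{L^2(\Omega)}\bigr)+t^2\bigl(B[v,v]-\lambda_1\|v\|_{L^2(\Omega)}^2\bigr).
\]
A quadratic in $t$ that is nonnegative and vanishes at $0$ has zero linear coefficient. This is elementary: otherwise take $t$ small with sign opposite to the linear coefficient, so that $g(t)<0$.

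\emph{Main obstacle.} The main obstacle is the Cauchy step, specifically handling $\|u_k+u_m\|_{L^2(\Omega)}$ possibly vanishing and managing the double limit cleanly. We would first reduce the sequence so that $\|\iota u_k-w\|_{L^2(\Omega)}<1/k$, then bound the midpoint norm below explicitly by $1-1/k-1/m$.
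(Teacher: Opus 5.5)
Your proof is correct, but it reaches the minimiser by a different route from the paper. The paper's sketch takes a weakly convergent subsequence of the bounded minimising sequence and passes the constraint to the weak limit $w$ through the compact embedding. It then gets weak lower semicontinuity by expanding $0\le B[u_k-w,u_k-w]$ and using $B[u_k,w]\to B[w,w]$.

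You stay in the norm topology instead. The parallelogram law for $z\mapsto B[z,z]$, combined with the homogeneous bound $B[z,z]\ge\lambda_1\|z\|_{L^2(\Omega)}^2$, makes the minimising sequence itself Cauchy in $H_0^1(\Omega)$. This needs compactness of $\iota$ only in the sequential form that \texttt{IsCompactOperator} supplies directly. It dispenses with weak sequential compactness of bounded sets in a Hilbert space and gives strong convergence at once. The paper's weak lower semicontinuity argument, for its part, is the template that survives for convex energies that are not quadratic; there your identity would have to be replaced by a uniform convexity estimate.

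Two refinements to your write-up. First, the parallelogram identity holds for the quadratic form of any bilinear $B$, so symmetry is used only in your Euler--Lagrange step, to merge $B[u,v]$ and $B[v,u]$. Second, the homogeneous bound also holds when $\iota z=0$, by coercivity, so vanishing midpoints are no obstacle and the $1/k$ subsequence is merely a convenience.

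The nontriviality assumption you flag is exactly the hypothesis \texttt{hne} of the Lean statement. Since Lean sets the infimum of the empty set of reals to $0$, without it positivity fails as well as attainment.
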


\paragraph{\leandecl{exists_principal_eigenpair}}
The Lean theorem is stated as
\begin{quote}
\begin{LeanCode}
theorem exists_principal_eigenpair (hco : IsCoercive B) (hsymm : ∀ U V : H01 Ω, B U V = B V U)
    (hRellich : IsCompactOperator (embL2 Ω)) (hne : ∃ V : H01 Ω, embL2 Ω V ≠ 0) :
    ∃ U : H01 Ω, ‖embL2 Ω U‖ = 1 ∧ B U U = principalEigenvalue B ∧
      ∀ V : H01 Ω, B U V = principalEigenvalue B * ⟪embL2 Ω U, embL2 Ω V⟫
\end{LeanCode}
\end{quote}

\begin{proof}[Proof sketch]
    The proof uses the direct method of the calculus of variations. By
    coercivity a minimising sequence is bounded, and by weak sequential compactness has a weak limit satisfying the constraint by the compact
    embedding. The weak lower semicontinuity of $B$ follows by expanding
    $0 \leq B[u_k - w, u_k - w]$ and using $B[u_k, w] \to B[w, w]$. Whereas Evans states the
    principle for the symmetric operator with no lower-order terms on a bounded
    connected open set and also proves that the minimiser is positive and
    simple, the formal statement concerns an abstract symmetric coercive form
    and takes the compactness of the embedding as a hypothesis.\footnote{\leandecl{EllipticPdes.Sobolev.exists_principal_eigenpair},
    with the minimality
    \leandecl{EllipticPdes.Sobolev.principalEigenvalue_le_of_weak_eigen} and the
    positivity \leandecl{EllipticPdes.Sobolev.principalEigenvalue_pos}. For the
    form of the Laplacian on a bounded measurable domain, coercivity follows from
    the Poincar\'e inequality and compactness of the embedding from the Rellich
    theorem
    (\leandecl{EllipticPdes.Sobolev.dirichlet_principal_eigenpair_of_bounded}). On
    the unit ball of $\mathbb{R}^n$ with $n > 2$ the remaining hypothesis is
    verified by a renormalised bump function
    (\leandecl{EllipticPdes.Sobolev.dirichlet_principal_eigenpair_ball}).}

    For the Laplacian, $B[u,v] = \int_\Omega Du \cdot Dv$ and $\lambda_1$ is the largest constant for which $\lambda_1 \|u\|_{L^2(\Omega)}^2 \leq \int_\Omega |Du|^2$ holds on all of $H_0^1(\Omega)$. Since equality is attained by some $u$ of unit $L^2$ norm, it is the best such constant, whereas Theorem~\ref{thm: poincare} gives a constant only in terms of a box containing $\Omega$.\footnote{\leandecl{EllipticPdes.Sobolev.dirichlet_poincare_sharp} with
the equality case
\leandecl{EllipticPdes.Sobolev.dirichlet_poincare_attained}.}
\end{proof}

The same compactness yields a complete family of eigenfunctions:

\begin{corollary}[{Completeness of the eigenspaces, \cite[\S 6.5.1,
Theorem~1]{evans-2010-par-dif-equ}}]\label{thm: spectral}
    Suppose that $B$ is symmetric and coercive, and the embedding $\iota : H_0^1(\Omega)
    \hookrightarrow L^2(\Omega)$ is compact. Let $G = \iota \circ
    (B^\sharp)^{-1} \circ \iota^\ast$ be the solution operator on $L^2(\Omega)$,
    where $B^\sharp$ is the Riesz representative of $B$ on $H_0^1(\Omega)$ and
    $\iota^\ast$ is the adjoint of the embedding. Then the eigenspaces of $G$
    span a dense subspace of $L^2(\Omega)$. Each eigenpair $G\varphi = \mu\varphi$ with $\mu \neq 0$
    lifts to a weak eigenfunction $u \in H_0^1(\Omega)$ of $L$ at the elliptic
    eigenvalue $\mu^{-1}$.
\end{corollary}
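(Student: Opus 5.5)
We will show that $G$ is a compact, self-adjoint operator on $L^2(\Omega)$. The spectral theorem for compact self-adjoint operators (available in Mathlib) then gives density of the span of its eigenspaces. After that, we lift eigenpairs back to $H_0^1(\Omega)$.

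First, since $B$ is coercive and bounded, Theorem~\ref{thm: lax milgram} makes $B^\sharp$ a bounded invertible operator on $H_0^1(\Omega)$, so $(B^\sharp)^{-1}$ is bounded. The embedding $\iota$ is compact by hypothesis, and $\iota^\ast$ is bounded. Hence $G = \iota \circ (B^\sharp)^{-1}\circ\iota^\ast$ is compact, because the composition of a compact operator with bounded ones is compact.

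Self-adjointness comes from the symmetry of $B$. For $f,g \in L^2(\Omega)$ put $u = (B^\sharp)^{-1}\iota^\ast f$ and $w = (B^\sharp)^{-1}\iota^\ast g$. Then
\[
\langle Gf, g\rangle_{L^2} = \langle u, \iota^\ast g\rangle_{H_0^1} = \langle u, B^\sharp w\rangle_{H_0^1} = B[w,u].
\]
Symmetrically, $\langle f, Gg\rangle_{L^2} = B[u,w]$, and these agree because $B$ is symmetric.

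With $G$ compact and self-adjoint on the real Hilbert space $L^2(\Omega)$, we invoke Mathlib's spectral theorem for compact self-adjoint operators. It states that the orthogonal complement of the sum of the eigenspaces is trivial, which is exactly density of their span. We expect the main obstacle to be packaging this correctly. Mathlib phrases the result in terms of \texttt{eigenspace} and the orthogonal family of eigenspaces, including the $\mu = 0$ eigenspace (the kernel). We must check that the form available there, namely \texttt{IsSelfAdjoint} together with \texttt{IsCompactOperator} yielding that the supremum of the eigenspaces has trivial orthogonal complement, matches our type. Otherwise we would have to derive it from the existence of an eigenvalue of norm $\|G\|$ on invariant subspaces, argued by restriction to the orthogonal complement.

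Finally, suppose $G\varphi = \mu\varphi$ with $\mu \neq 0$, and set $u = \mu^{-1}(B^\sharp)^{-1}\iota^\ast\varphi \in H_0^1(\Omega)$. Then $\iota u = \mu^{-1}G\varphi = \varphi$. For every $v \in H_0^1(\Omega)$,
\[
B[u,v] = \langle B^\sharp u, v\rangle_{H_0^1} = \mu^{-1}\langle \iota^\ast\varphi, v\rangle_{H_0^1} = \mu^{-1}\langle \iota u, \iota v\rangle_{L^2}.
\]
Thus $u$ is a weak eigenfunction at eigenvalue $\mu^{-1}$. If $\varphi \neq 0$ then $u \neq 0$, since $\iota u = \varphi$.
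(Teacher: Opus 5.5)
Your proposal is correct and follows essentially the paper's route. $G$ is compact because $\iota$ is, and self-adjoint by the symmetry of $B$, so the spectral theorem for compact self-adjoint operators gives the density of the span of the eigenspaces; eigenpairs with $\mu \neq 0$ then lift through $u = \mu^{-1}(B^\sharp)^{-1}\iota^\ast\varphi$. The paper also records that $G$ is positive and injective. Because the supremum in the statement includes the eigenspace at $\mu = 0$, you are right that neither property is needed for the density claim.
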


\paragraph{\leandecl{solOp_spectral}}
The Lean theorem is stated as
\begin{quote}
\begin{LeanCode}
theorem solOp_spectral (hco : IsCoercive B) (hsymm : ∀ U V, B U V = B V U)
    (hRellich : IsCompactOperator (embL2 Ω)) :
    (⨆ μ : ℝ, Module.End.eigenspace (solOp B hco : Module.End ℝ (L2D Ω)) μ)ᗮ = ⊥
\end{LeanCode}
\end{quote}

\begin{proof}[Proof sketch]

Minimising over unit-norm functions orthogonal to the eigenfunction for $\lambda_1$ above yields the next eigenvalue. Repeating this, for each $n \geq 1$ we obtain functions $w_1, \dotsc, w_n \in H_0^1(\Omega)$, orthonormal
in $L^2(\Omega)$, and $0 < \lambda_1 \leq \dotsb \leq \lambda_n$ with
$B[w_k, v] = \lambda_k \langle w_k, v\rangle_{L^2(\Omega)}$ for every $v \in H^1_0(\Omega)$, where
each $\lambda_k$ is the infimum of the Rayleigh quotient over the elements of unit
$L^2$ norm orthogonal to $w_1, \dotsc, w_{k-1}$.\footnote{\leandecl{EllipticPdes.Sobolev.exists_eigen_family},
with
the step \leandecl{EllipticPdes.Sobolev.exists_higher_eigenpair}. At each stage
the recursion requires an element of nonzero $L^2$ class orthogonal to the
family already constructed. The existence of such elements at every stage is
precisely the infinite dimensionality of $H_0^1(\Omega)$. On the unit ball of $\mathbb{R}^n$ with $n > 2$
it is verified by $m$ bump functions centred at the points
$((2k+1)/(2m) - 1/2)e_1$ whose supports are disjoint because the
centres are $1/m$ apart and the radii equal $1/(2m)$. The resulting statement
\leandecl{EllipticPdes.Sobolev.dirichlet_eigen_family_ball} assumes $n > 2$.}

    Since the operator $G$ is compact, self-adjoint, positive and injective,
    the spectral theorem for compact self-adjoint operators applies
    \cites[App.~D.6, Theorem~7]{evans-2010-par-dif-equ}[\S
    6.4]{brezis-2011-fun-ana-sob}. The elliptic eigenvalues are moreover
    positive and of finite multiplicity. Both properties follow from results
    already established because every weak eigenvalue is bounded below by the
    positive principal eigenvalue of Theorem~\ref{thm: principal eigenvalue}
    and the eigenspace of a compact operator at a nonzero eigenvalue is finite
    dimensional. Evans obtains a countable
    orthonormal basis of $L^2(\Omega)$ consisting of eigenfunctions with
    increasing eigenvalues $0 < \lambda_1 \leq \lambda_2 \leq \dotsb$ diverging to
    $+\infty$, whereas the library provides the density statement above
    together with the orthonormal families of every finite length constructed
    before it. It does not construct the basis as a single object and does not
    state that the eigenvalues diverge. Since only finitely many spectral points
    of a compact operator satisfy $|\mu| \geq \delta$ by
    Theorem~\ref{thm: compact spectrum} below, the divergence of the eigenvalues would follow from
    results already formalised by an argument that we have not formalised.\footnote{\leandecl{solOp_spectral} with the lifting
    \leandecl{solOp_weak_eigen} and the divergence-form specialisation
    \leandecl{symmetric_fullElliptic_spectral} whose form for bounded sets is
    \leandecl{symmetric_fullElliptic_spectral_of_bounded}. The specialisation
    deduces the symmetry of $B$ from the coefficients under the assumptions
    $a^{ij} = a^{ji}$ and $b \equiv 0$ almost everywhere on $\Omega$, with
    coercivity following from $c \geq 0$. The positivity and the finite
    multiplicity are \leandecl{EllipticPdes.Sobolev.weak_eigenvalue_pos} and
    \leandecl{EllipticPdes.Sobolev.solOp_finiteDimensional_eigenspace}, with the
    instance at $-\Delta$ on the unit ball
    \leandecl{EllipticPdes.Sobolev.dirichlet_eigenvalue_pos_ball}.}
\end{proof}

The direct method also applies under an $L^q$ constraint with $q \neq 2$, in which
case the minimiser satisfies a semi-linear equation. Let $n > 2$, let $B_1 \subseteq
\mathbb{R}^n$ be the unit ball and let $2 \leq q < 2^\star$. Minimising the Dirichlet
energy over $\{\|u\|_{L^q(B_1)} = 1\}$ gives $u \in H_0^1(B_1)$ and
$\lambda = \int_{B_1}|Du|^2 > 0$ with
\[
    \int_{B_1} Du \cdot Dv = \lambda \int_{B_1} |u|^{q-2} u \, v
    \qquad \bigl( v \in H_0^1(B_1) \bigr) ,
\]
so that $u$ is a weak solution of $-\Delta u = \lambda |u|^{q-2} u$ with zero
boundary data. Existence again follows from the direct method, in which the Rellich--Kondrachov theorem below the critical exponent gives the strong $L^q$ convergence needed to pass
the constraint to the weak limit. This step fails at $q = 2^\star$, in accordance with
the restriction on the exponent
$p + 1 < 2^\star$ for $-\Delta u = u^p$.\footnote{\leandecl{EllipticPdes.Embedding.exists_weakSolution_dirichlet_of_lt},
with the minimiser
\leandecl{EllipticPdes.Embedding.exists_dirichlet_minimiser_of_lt} and the
abstract equation
\leandecl{EllipticPdes.Analysis.euler_lagrange_of_quadratic_min}. Minimising the
$H_0^1$ norm in place of the Dirichlet energy gives
$-\Delta u + u = \lambda |u|^{q-2} u$
(\leandecl{EllipticPdes.Embedding.exists_weakSolution_semilinear_of_lt}).}

The third
existence theorem requires no coercivity, but its proof uses the following
result on the spectrum of a compact operator:

\begin{theorem}[{Spectrum of a compact operator, \cite[App.~D.5,
Theorem~6]{evans-2010-par-dif-equ}}]\label{thm: compact spectrum}
    Let $K$ be a compact operator on an infinite-dimensional real Hilbert space.
    Then zero lies in the spectrum of $K$ whose nonzero part consists of
    countably many eigenvalues with only finitely many spectral points
    satisfying $|\mu| \geq \delta$ for each $\delta > 0$.
\end{theorem}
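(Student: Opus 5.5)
We split the statement into three claims and prove them in order: zero lies in the spectrum, each nonzero spectral point is an eigenvalue, and for each $\delta>0$ only finitely many spectral points satisfy $|\mu|\ge\delta$. Countability then comes for free, since the nonzero spectrum is the union over $m\ge 1$ of the finite sets $\{\mu : |\mu|\ge 1/m\}$.

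\emph{Zero lies in the spectrum.} Suppose instead that $K$ were invertible with bounded inverse. Then $1 = K K^{-1}$ would be compact, so the closed unit ball of $H$ would be compact. By Riesz's lemma this forces $H$ to be finite-dimensional. More concretely, an orthonormal sequence $(e_k)$, which exists because $H$ is infinite-dimensional, has no convergent subsequence, since $\|e_k - e_l\| = \sqrt 2$ for $k \neq l$.

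\emph{Nonzero spectral points are eigenvalues.} This is the Fredholm alternative for $\mu - K = \mu(1 - \mu^{-1}K)$, with $\mu^{-1}K$ compact. It is the same Riesz theory that underlies Theorem~\ref{thm: fredholm} and Theorem~\ref{thm: fredholm complete}, which the library already proves abstractly as \texttt{fredholm\_alternative} with compactness of the operator as a hypothesis. The argument is as follows. If $\mu$ is not an eigenvalue, then $\mu - K$ is injective. Compactness gives a lower bound $\|(\mu - K)x\| \ge c\|x\|$: otherwise there are unit vectors $x_k$ with $(\mu-K)x_k\to 0$, a subsequence with $Kx_k$ convergent, hence $x_k$ convergent to a unit eigenvector, a contradiction. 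The lower bound gives closed range. Since $\mu - K^{\ast}$ is also of this form, $(\mathrm{ran}(\mu-K))^\perp = \ker(\mu - K^\ast)$, and the index identity $\dim\ker(\mu-K^\ast) = \dim\ker(\mu-K)$ from Theorem~\ref{thm: fredholm complete} makes this kernel trivial. So $\mu - K$ is bijective, its inverse is bounded by the open mapping theorem, and $\mu$ lies in the resolvent set.

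\emph{Finiteness above $\delta$.} This is the main step. Suppose there were infinitely many distinct eigenvalues $\mu_k$ with $|\mu_k| \ge \delta$, and choose eigenvectors $x_k$. Eigenvectors for distinct eigenvalues are linearly independent, so the spaces $M_k = \mathrm{span}(x_1,\dots,x_k)$ form a strictly increasing chain of finite-dimensional, hence closed, subspaces. Gram--Schmidt gives unit vectors $y_k \in M_k \cap M_{k-1}^\perp$. Each $M_k$ is $K$-invariant, and $(K - \mu_k)M_k \subseteq M_{k-1}$. For $l < k$ we can therefore write $K y_k - K y_l = \mu_k y_k + z$ with $z \in M_{k-1}$, which gives $\|Ky_k - Ky_l\| \ge |\mu_k| \ge \delta$. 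Hence $(Ky_k)$ has no Cauchy subsequence, contradicting compactness of $K$ applied to the bounded sequence $(y_k)$.

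We expect this last step to be the main obstacle in Lean. It needs the linear independence of eigenvectors (Mathlib has \texttt{Module.End.eigenvectors\_linearIndependent}), a Gram--Schmidt or orthogonal-projection construction along the chain, and an extraction argument against \texttt{IsCompactOperator}. We would formulate it as a statement about any injective sequence of eigenvalues bounded away from zero, and obtain finiteness by contraposition.
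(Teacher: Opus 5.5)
Your proposal is correct and follows essentially the paper's route. The paper's sketch also obtains the nonzero spectrum as eigenvalues from the Fredholm alternative for $\mu-K$, and obtains finiteness away from $0$ from Evans's argument in App.~D.5, which uses the chain of eigenvector spans with unit vectors $y_k\in M_k\cap M_{k-1}^\perp$; your direct bound $\|Ky_k-Ky_l\|\ge|\mu_k|\ge\delta$ is a slight streamlining of that argument that fits the formal finiteness statement. One citation needs adjusting: the index identity you use in the second step is the abstract one for a compact operator, not the elliptic statement of Theorem~\ref{thm: fredholm complete}, and you do not need it, since the clause of the Fredholm alternative saying that injectivity of $\mu-K$ implies surjectivity already gives the conclusion.
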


\paragraph{\leandecl{spectrum_compact_operator}}
The Lean theorem is stated as
\begin{quote}
\begin{LeanCode}
theorem spectrum_compact_operator (hK : IsCompactOperator K)
    (hinf : ¬ FiniteDimensional ℝ E) :
    (0 : ℝ) ∈ spectrum ℝ K
    ∧ spectrum ℝ K \ {0}
        = {μ : ℝ | Module.End.HasEigenvalue (K.toLinearMap) μ} \ {0}
    ∧ (spectrum ℝ K \ {0}).Countable
    ∧ ∀ δ : ℝ, 0 < δ → {μ ∈ spectrum ℝ K | δ ≤ |μ|}.Finite
\end{LeanCode}
\end{quote}

\begin{proof}[Proof sketch]
    Away from zero the spectrum consists of eigenvalues by the Fredholm
    alternative for $\mu - K$. The statement on accumulation follows from the
    same argument as in \cite[App.~D.5, Theorem~6]{evans-2010-par-dif-equ}, and consequently the non-zero spectrum converges to $0$. Whereas Evans states
    that the non-zero eigenvalues form a finite set or a sequence tending to
    $0$, the formal statement asserts the equivalent finiteness of the spectrum
    outside every neighbourhood of $0$.\footnote{\leandecl{EllipticPdes.Sobolev.spectrum_compact_operator}.}
\end{proof}

We return to the general operator given in (\ref{eqn: divergence form operator}), with no assumption of symmetry:

\begin{theorem}[{Existence III, \cite[\S 6.2.3,
Theorem~5]{evans-2010-par-dif-equ}}]\label{thm: existence three}
    Let $\Omega \subseteq \mathbb{R}^n$ be bounded and measurable, and $L$ be
   an operator of the form \eqref{eqn: divergence form operator} which is uniformly elliptic as in \eqref{eqn: ellipticity}. Then, there is a countable set
    $\Sigma \subseteq \mathbb{R}$ with $\Sigma \cap (-\infty, C]$ finite for
    every $C \in \mathbb{R}$. The problem $Lu = su + f$ has a unique weak
    solution $u \in H_0^1(\Omega)$ for every $f \in L^2(\Omega)$ exactly when
    $s \notin \Sigma$. Moreover, for $s \notin \Sigma$ there is a constant $C > 0$
    depending on $s$, $\Omega$ and $L$ with
    $\|u\|_{L^2(\Omega)} \leq C \|f\|_{L^2(\Omega)}$ for every such weak solution.
\end{theorem}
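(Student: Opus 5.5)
We follow Evans' proof of \cite[\S 6.2.3, Theorem~5]{evans-2010-par-dif-equ} and reuse the factorisation \eqref{eqn: fredholm factorisation}. Take $\gamma$ from Theorem~\ref{thm: garding}, so that $B_\gamma = B + \gamma\langle\cdot,\cdot\rangle_{L^2(\Omega)}$ is coercive on $H_0^1(\Omega)$. Let $E$ be its Riesz representative, which is invertible by Theorem~\ref{thm: lax milgram}. Let $T = \iota^\ast\iota$, which is compact by the Rellich--Kondrachov theorem.

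\textbf{Step 1: reduce to one compact operator.} The weak form of $Lu = su + f$ is $B[u,v] - s\langle u, v\rangle_{L^2} = \langle f, v\rangle_{L^2}$. Writing $g = \iota^\ast f$, this becomes
\[
(E - (\gamma + s)T)\,u = g,
\qquad\text{that is,}\qquad
E\bigl(1 - (\gamma+s)K_0\bigr)u = g,
\]
where $K_0 = E^{-1}T$ is compact. Note that $K_0 = \gamma^{-1}K$ when $\gamma > 0$; we can always enlarge $\gamma$ to ensure this.

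\textbf{Step 2: define $\Sigma$.} For $s = -\gamma$ the operator is $E$ itself, which is invertible, so $-\gamma \notin \Sigma$. For $s \neq -\gamma$ the problem is uniquely solvable for all $g$ exactly when $1 - (\gamma+s)K_0$ is bijective, that is, when $\mu = (\gamma+s)^{-1}$ is not in the spectrum of $K_0$. We therefore set
\[
\Sigma = \{\, s : (\gamma+s)^{-1} \in \operatorname{spectrum}(K_0)\setminus\{0\} \,\}.
\]
By Theorem~\ref{thm: compact spectrum} this set is countable. (If $H_0^1(\Omega)$ is finite dimensional, the spectrum is finite and the claim is easier; this case is handled separately.)

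Finiteness of $\Sigma\cap(-\infty,C]$ follows from two facts.
\begin{itemize}
\item The nonzero eigenvalues $\mu$ of $K_0$ are positive. If $K_0x = \mu x$ with $x \neq 0$, then $\mu\,B_\gamma[x,x] = \langle Tx, x\rangle = \|\iota x\|^2 \geq 0$, and $\iota x \neq 0$ because otherwise $\mu x = 0$.
\item Hence $\gamma + s = \mu^{-1} > 0$, so $s > -\gamma$. The condition $s \le C$ then means $\mu \ge (\gamma + C)^{-1}$. Theorem~\ref{thm: compact spectrum} gives finitely many such $\mu$ (and none if $C \le -\gamma$).
\end{itemize}

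\textbf{Step 3: the "exactly when" claim.} We need both directions.
\begin{itemize}
\item If $s\in\Sigma$, pick an eigenvector $x \neq 0$. It is a nonzero solution with $f = 0$, so uniqueness fails.
\item If $s \notin \Sigma$, the operator $1-(\gamma+s)K_0$ is bijective, so the problem is uniquely solvable. This holds in particular for $g = \iota^\ast f$ with $f \in L^2(\Omega)$.
\end{itemize}
There is one subtlety. The statement quantifies only over $f\in L^2(\Omega)$, so the converse must produce a failure of uniqueness from $s\in\Sigma$. This is exactly what the eigenvector gives, so no surjectivity onto all of $H^{-1}(\Omega)$ is needed.

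\textbf{Step 4: the estimate.} For $s\notin\Sigma$ the operator $S_s = (1-(\gamma+s)K_0)^{-1}E^{-1}$ is a bounded bijection. Boundedness of the inverse follows from the open mapping theorem (the bounded inverse theorem in Mathlib). Then
\[
\|u\|_{L^2} \le \|\iota\|\,\|S_s\|\,\|\iota^\ast\|\,\|f\|_{L^2},
\]
with $\|\iota\| \le 1$ and $\|\iota^\ast\| \le 1$. We take $C = \max(\|S_s\|,1) > 0$.

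I expect the main obstacle to be the bookkeeping in Steps 2–3. Three points need care: translating between the spectrum of $K_0$ and the set $\Sigma$ through the nonlinear reparametrisation $s \mapsto (\gamma+s)^{-1}$; handling $s = -\gamma$ and the finite-dimensional case separately; and proving that the spectral values are positive, which is the step that converts "finitely many $|\mu|\ge\delta$" into the one-sided finiteness $\Sigma\cap(-\infty,C]$. I would first prove positivity as a standalone lemma about $K_0 = E^{-1}T$, and then obtain countability as the image of a countable set under an injective map.
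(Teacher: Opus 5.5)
Your proposal is correct and follows essentially the same route as the paper. Your factorisation $E\bigl(1-(\gamma+s)K_0\bigr)$ with $K_0=\gamma^{-1}K$ is the paper's $E\bigl(1-\tfrac{\gamma+s}{\gamma}K\bigr)$; positivity of the nonzero eigenvalues (via $\mu B_\gamma[x,x]=\|\iota x\|^2$) gives $\Sigma\subseteq(-\gamma,\infty)$; Theorem~\ref{thm: compact spectrum} gives countability and the finiteness of $\Sigma\cap(-\infty,C]$; and the resolvent bound comes from the bounded inverse. You use one fact without stating it, both in the finiteness step and when you pick an eigenvector for $s\in\Sigma$: every nonzero spectral value of $K_0$ is an eigenvalue. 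This is the second clause of Theorem~\ref{thm: compact spectrum} in infinite dimensions, is automatic in finite dimensions, and is the same Fredholm fact behind the paper's formulation in terms of injectivity.
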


\paragraph{\leandecl{existence_three_of_bounded}}
The Lean theorem is stated as
\begin{quote}
\begin{LeanCode}
theorem existence_three_of_bounded (hΩm : MeasurableSet Ω)
    (hΩb : Bornology.IsBounded Ω) :
    ∃ S : Set ℝ, S.Countable ∧ (∀ C : ℝ, (S ∩ Set.Iic C).Finite) ∧
      ∀ lam : ℝ, lam ∉ S ↔ ∀ f : L2D Ω, ∃! u : H01 Ω, ∀ v : H01 Ω,
        Op.fullBilin Ω u v
          = lam * ⟪(u : H1amb Ω) 0, ((v : H1amb Ω) 0)⟫
            + ∫ x in Ω, (f x : ℝ) * ((v : H1amb Ω) 0 x : ℝ)
\end{LeanCode}
\end{quote}

\begin{proof}[Proof sketch]
    The set $\Sigma$ is determined by 
    $\mathcal{A}_s = E(1 - \tfrac{\gamma+s}{\gamma} K)$ as in \eqref{eqn: fredholm factorisation} since the problem at $s$ is uniquely
    solvable exactly when $1 - \tfrac{\gamma+s}{\gamma} K$ is injective.
    By Theorem~\ref{thm: compact spectrum} the eigenvalues of $K$ that obstruct
    injectivity form a countable set whose only accumulation point is $0$. Since the
    eigenvalues of $K$ are positive, $\Sigma$ is contained in $(-\gamma, \infty)$
    and each set $\Sigma \cap (-\infty, C]$ is finite, it follows that any infinite
    $\Sigma$ is a sequence diverging to $+\infty$. The resolvent bound, which Evans states as a separate theorem, is included in the formal statement.\footnote{\leandecl{EllipticPdes.Sobolev.FullEllipticOp.existence_three_of_bounded}
    and
    \leandecl{EllipticPdes.Sobolev.FullEllipticOp.resolvent_bound_of_bounded}.}
\end{proof}

\subsection{Regularity of weak solutions}\label{subsec: regularity}\label{subsec:
interior h2}\label{subsec: sobolev}\label{subsec: interior holder}

Once one has produced a weak solution to (\ref{eqn: dirichlet problem}), the Sobolev embedding ensures weak derivatives can be traded for higher integrability via repeated application of the Gagliardo--Nirenberg--Sobolev and Morrey inequalities:

\begin{theorem}[{Sobolev embedding, \cites[\S 5.6.3, Theorem~6]{evans-2010-par-dif-equ}}]\label{thm: sobolev}
    Let $n \geq 2$, $\Omega \subseteq \mathbb{R}^n$
    be open and bounded with $C^1$ boundary, $p \in [1, \infty)$, $k \in \mathbb{N}$, and
    $u \in W^{k,p}(\Omega)$. Then:
    \begin{enumerate}
        \item If $k < n/p$, then $u \in L^q(\Omega)$ with
        $1/q = 1/p - k/n$, and
        $\| u \|_{L^q(\Omega)} \leq C \| u \|_{W^{k,p}(\Omega)}$ for a constant
        $C > 0$ depending on $k$, $p$, $n$ and $\Omega$.
        
        \item If $k > n/p$, then
        $u \in C^{k - 1 - \lfloor n/p \rfloor, \gamma}(\overline{\Omega})$
        with
        \[
            \gamma =
            \begin{cases}
                \text{any value in } (0, 1), & n/p \in \mathbb{N} , \\[2pt]
                \lfloor n/p \rfloor - n/p + 1, & n/p \notin \mathbb{N} ,
            \end{cases}
        \]
        and
        $\| u \|_{C^{k - 1 - \lfloor n/p \rfloor, \gamma}(\overline{\Omega})}
            \leq C \| u \|_{W^{k,p}(\Omega)}$ for a $C > 0$ depending on
        $k$, $n$, $p$, $\gamma$ and $\Omega$.
    \end{enumerate}
\end{theorem}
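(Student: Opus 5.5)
The plan follows Evans: reduce to $k = 1$ on all of $\mathbb{R}^n$ by an extension argument, then iterate. First, since $\partial\Omega$ is $C^1$, we use a bounded extension operator $E \colon W^{k,p}(\Omega) \to W^{k,p}(\mathbb{R}^n)$ whose images are supported in a fixed bounded open $V \Supset \Omega$, with $\|Eu\|_{W^{k,p}(\mathbb{R}^n)} \leq C\|u\|_{W^{k,p}(\Omega)}$. It is built by flattening the boundary locally, reflecting with higher-order reflection coefficients to preserve $k$ derivatives, and patching with a partition of unity. Second, we use density of $C_c^\infty(\mathbb{R}^n)$ in $W^{k,p}(\mathbb{R}^n)$ by mollification. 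Together these reduce both parts to smooth compactly supported $u$ with support in $V$, followed by a limiting argument: Fatou for the $L^q$ bound, and uniform convergence for the Hölder bound.

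For part (1) with $k=1$, we invoke the Gagliardo--Nirenberg--Sobolev inequality $\|u\|_{L^{p^\ast}} \leq C\|Du\|_{L^p}$ for $1 \leq p < n$ on $C^1_c(\mathbb{R}^n)$. The case $p=1$ is the product-of-line-integrals argument with the generalised Hölder inequality; for $p > 1$ we apply it to $|u|^\gamma$. For general $k < n/p$ we induct. Set $p_j$ by $1/p_j = 1/p - j/n$. Applying the $k=1$ case to $D^\alpha u$ with $|\alpha| \leq k-j-1$ gives $W^{k-j,p_j} \hookrightarrow W^{k-j-1,p_{j+1}}$, and we chain these $k$ steps, each using $p_j < n$, which holds because $k < n/p$.

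For part (2), the base tool is Morrey's inequality $\|u\|_{C^{0,1-n/p}} \leq C\|u\|_{W^{1,p}}$ for $n < p < \infty$. Its proof averages $|u(y)-u(x)|$ over a ball, bounds this by $\int_{B_r} |Du(z)|\,|z-x|^{1-n}\,dz$, and applies Hölder. If $n/p \notin \mathbb{N}$, set $\ell = \lfloor n/p\rfloor$ and apply part (1) to get $D^\alpha u \in W^{k-\ell,r}$ with $1/r = 1/p - \ell/n$, so that $r > n$. Morrey then applied to $D^\beta u$ with $|\beta| \leq k-\ell-1$ gives $C^{k-\ell-1,\gamma}$ with $\gamma = 1 - n/r = \ell + 1 - n/p$. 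If $n/p \in \mathbb{N}$, set $\ell = n/p - 1$. Part (1) gives $W^{k-\ell,r}$ with $r = n$. Since $V$ is bounded, $W^{1,n}(V) \subset W^{1,q}(V)$ for all $q < n$, and GNS sends $q \mapsto q^\ast$ arbitrarily large. Choosing $q^\ast > n$ and applying Morrey yields every $\gamma \in (0,1)$, with the constant depending on $\gamma$.

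The main obstacle is the extension operator of order $k$. Its construction needs a $C^k$ flattening map to keep $k$ derivatives, whereas the statement only assumes a $C^1$ boundary. I would first try to sidestep this by working with $W^{1,p}$ extensions only, which need only a $C^1$ boundary. Part (1) then follows by iterating on $\Omega$ itself: at each stage, apply the first-order extension to $D^\alpha u \in W^{1,p_j}(\Omega)$. Part (2) follows the same way, since Morrey on $\overline{\Omega}$ also needs only $W^{1,r}$ extensions. If that route works, the $C^k$ issue disappears. What remains is the bookkeeping of the induction on $k$ and of the constants.
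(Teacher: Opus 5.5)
Your proposal is correct, and the route you settle on in your last paragraph is the paper's route, so the order-$k$ extension of your first paragraph is never needed. You prove a one-step Gagliardo--Nirenberg--Sobolev estimate on $\Omega$ by applying the first-order $C^1$ extension operator to one derivative at a time, and iterate it $k$ times for (i) and $\lfloor n/p\rfloor$ times for (ii); when $n/p\in\mathbb{N}$ the last step takes its hypothesis at a lowered exponent, and Morrey's inequality is then applied on a ball containing $\overline{\Omega}$. The one step the paper makes explicit that you use only implicitly is that Morrey's Hölder representatives of the weak derivatives are the classical derivatives of one another (proved there by mollification), which membership in $C^{k-1-\lfloor n/p\rfloor,\gamma}(\overline{\Omega})$ requires.
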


\paragraph{\leandecl{exists_const_memLp_of_gradClosed_domain_ideal}}
The Lean theorem is stated as
\begin{quote}
\begin{LeanCode}
theorem exists_const_memLp_of_gradClosed_domain_ideal (hd : 1 < d)
    {Ω : Set (EuclideanSpace ℝ (Fin d))} (hΩopen : IsOpen Ω) (hΩb : Bornology.IsBounded Ω)
    (hC1 : HasC1Boundary Ω) {p₀ : ℝ≥0} (hp₀ : 1 ≤ p₀) (ι : Type*) (s : ℕ)
    (hsd : (p₀ : ℝ) * s < (d : ℝ)) :
    ∃ K : ℝ≥0, ∀ {F : ι → EuclideanSpace ℝ (Fin d) → ℝ} {nxt : ι → Fin d → ι}
      {dep : ι → ℕ} {m : ℕ}, (∀ i k, dep (nxt i k) ≤ dep i + 1) →
      (∀ i, dep i < m → HasWeakGradOn Ω (F i) (fun k => F (nxt i k))) →
      (∀ i, dep i ≤ m → MemLp (F i) p₀ (volume.restrict Ω)) →
      ∀ M : ℝ≥0∞, (∀ j, dep j ≤ m → eLpNorm (F j) p₀ (volume.restrict Ω) ≤ M) →
      ∀ i, dep i + s ≤ m →
        MemLp (F i) (Real.toNNReal ((p₀ : ℝ)⁻¹ - (s : ℝ) * (d : ℝ)⁻¹)⁻¹)
            (volume.restrict Ω) ∧
          eLpNorm (F i) (Real.toNNReal ((p₀ : ℝ)⁻¹ - (s : ℝ) * (d : ℝ)⁻¹)⁻¹)
            (volume.restrict Ω) ≤ (K : ℝ≥0∞) * M
\end{LeanCode}
\end{quote}
\paragraph{\leandecl{exists_const_contDiffOn_holderOnWith_domain_ideal}}
The Lean theorem is stated as
\begin{quote}
\begin{LeanCode}
theorem exists_const_contDiffOn_holderOnWith_domain_ideal (hd : 1 < d)
    {Ω : Set (EuclideanSpace ℝ (Fin d))} (hΩopen : IsOpen Ω) (hΩb : Bornology.IsBounded Ω)
    (hC1 : HasC1Boundary Ω) {p₀ : ℝ≥0} (hp₀ : 1 ≤ p₀) (ι : Type*) (s : ℕ)
    (hsd : (p₀ : ℝ) * s < (d : ℝ)) (hlt : (d : ℝ) < (p₀ : ℝ) * ((s : ℝ) + 1)) :
    ∃ C : ℝ≥0, ∀ {F : ι → EuclideanSpace ℝ (Fin d) → ℝ} {nxt : ι → Fin d → ι}
      {dep : ι → ℕ} {m : ℕ}, (∀ i k, dep (nxt i k) ≤ dep i + 1) →
      (∀ i, dep i < m → HasWeakGradOn Ω (F i) (fun k => F (nxt i k))) →
      (∀ i, dep i ≤ m → MemLp (F i) p₀ (volume.restrict Ω)) →
      ∀ M : ℝ≥0, (∀ j, dep j ≤ m → eLpNorm (F j) p₀ (volume.restrict Ω) ≤ (M : ℝ≥0∞)) →
      ∃ v : ι → EuclideanSpace ℝ (Fin d) → ℝ,
        (∀ i, dep i + 1 + s ≤ m → v i =ᵐ[volume.restrict Ω] F i) ∧
        (∀ i, dep i + 1 + s ≤ m → ∀ y ∈ closure Ω, ‖v i y‖ ≤ ((C * M : ℝ≥0) : ℝ)) ∧
        (∀ i, dep i + 1 + s ≤ m →
          HolderOnWith (C * M) (Real.toNNReal ((s : ℝ) + 1 - (d : ℝ) / (p₀ : ℝ)))
            (v i) (closure Ω)) ∧
        (∀ (n : ℕ) (i : ι), dep i + n + 1 + s ≤ m → ContDiffOn ℝ (n : ℕ) (v i) Ω) ∧
        (∀ i, dep i + 2 + s ≤ m → ∀ y ∈ Ω,
          HasFDerivAt (v i) (gradCLM (fun k => v (nxt i k)) y) y)
\end{LeanCode}
\end{quote}

\begin{proof}[Proof sketch]
    Cases~(i) and~(ii) refer to the space $W^{k,p}(\Omega)$, for which the library interprets each element as a family closed under weak differentiation all of whose members lie in $L^p(\Omega)$. Such a family consists of an index type,
    a function for each index, a successor map that assigns to an index the
    weak derivatives of its function and a depth function that bounds the number
    of available derivatives. For $u \in W^{k,p}(\Omega)$ every member of the
    family $\{D^\alpha u : |\alpha| \leq k\}$ is bounded by
    $\|u\|_{W^{k,p}(\Omega)}$, and hence the conclusion stated for the member of
    depth zero gives the estimates of cases~(i) and~(ii). This formulation is the principal departure
    from the cited proof, which iterates on $W^{k,p}(\Omega)$ and writes
    $D^{\alpha} u$ as a single symbol throughout. In a proof assistant one has
    to specify which function the symbol denotes at each order, and closure under
    weak differentiation allows a single induction with no separate induction on
    the order of differentiation. Theorems~\ref{thm: interior h2}
    and~\ref{thm: higher interior} below use the same interpretation.

    We begin with a local form of the inequality, which requires no assumption on $\partial \Omega$. Let
    $x_0 \in \mathbb{R}^n$, let $0 < r < R$, and let $p'$ satisfy
    $1/p' = 1/p - 1/n$. Then there is a constant $C$, depending only on $n$, $p$,
    $x_0$, $r$ and $R$, such that every $v \in L^p(B_R(x_0))$ with weak gradient
    $g \in L^p(B_R(x_0))^n$ lies in $L^{p'}(B_r(x_0))$, with
    \[
        \| v \|_{L^{p'}(B_r(x_0))}
            \leq C \Bigl( \| v \|_{L^p(B_R(x_0))}
                + \sum_{\ell} \| g_\ell \|_{L^p(B_R(x_0))} \Bigr).
    \]

    Following \cite[\S 5.6.1]{evans-2010-par-dif-equ}, for a compactly
    supported $C^1$ function the inequality above is the Gagliardo--Nirenberg--Sobolev
    estimate, which is available in Mathlib. We extend it to a function with a
    weak gradient on a ball by means of a cutoff and mollification. Let $\eta$
    be a test function with $0 \leq \eta \leq 1$ that is equal to $1$ on
    $\overline{B_r(x_0)}$ and supported in $B_R(x_0)$, and let $M$ bound the
    derivatives of $\eta$. By the product rule for weak gradients the compactly
    supported function $w = \eta v$ has the weak gradient
    $\eta g + v \nabla\eta$ on all of $\mathbb{R}^n$. Both functions lie in
    $L^p$ with a bound that introduces the factor $\max(1, nM)$ into the constant. The
    mollifications of $w$ are smooth with compact support and have as classical
    partial derivatives the mollifications of the weak gradient, whose $L^p$
    norms are bounded uniformly in the inner radius by Young's inequality. Applying the $C^1$ case to each mollification, we obtain a uniform bound on the
    $L^{p'}$ norm that is independent of the radius. By Fatou's lemma this bound
    passes to the almost-everywhere limit, which agrees with $v$ on $B_r(x_0)$.\footnote{\leandecl{EllipticPdes.Embedding.exists_eLpNorm_sobolevConj_le}.}

    Case~(i) is obtained by iterating the above, following
    \cite[Proof of Theorem~IV.2.3]{guo-2026-par-dif-equ}. On $\mathbb{R}^n$ the
    iteration runs over a family of compact support with no cutoff, thus giving a bound in terms of a constant and the gradient. We argue by induction on the number $s$ of steps, for all indices simultaneously, and with base exponent $p$ we write $t = 1/p - s/n$. The condition $ps < n$ gives $t > 0$, so that $1/t$ is a finite exponent with $p \leq 1/t$. If
    $s = 0$, the member lies in $L^p$ and hence in every $L^q$ with $q \leq p$
    since $\Omega$ is bounded. For the inductive step, the inductive hypothesis applies to the index and
    each of its derivatives, because the successor increases the depth function mentioned above by at most one.\footnote{\leandecl{EllipticPdes.Embedding.memLp_of_gradClosed_compactSupport_ideal},
    based on the step
    \leandecl{EllipticPdes.Embedding.exists_eLpNorm_sobolevConj_le_compactSupport}.}
    The statement is transferred to $\Omega$ itself by the extension operator
    of \cite[\S 5.4, Theorem 1]{evans-2010-par-dif-equ}, which extends $W^{1,p}(\Omega)$ functions on a bounded $C^1$ domain to a compactly supported $W^{1,p}(\mathbb{R}^n)$ function with norm bounded, up to a constant, by the original norm.\footnote{\leandecl{EllipticPdes.Extension.exists_extension_subset_bound},
    based on \leandecl{EllipticPdes.Extension.exists_localExtension_bound}.}
    Applying the operator to one member of the family at a time, we transfer the above step from $\mathbb{R}^n$ to $\Omega$.\footnote{\leandecl{EllipticPdes.Embedding.exists_eLpNorm_sobolevConj_le_domain},
    iterated by
    \leandecl{EllipticPdes.Embedding.exists_const_memLp_of_gradClosed_domain}.}

    For case~(ii) the same idea is run for
    $s = \lfloor n/p \rfloor$ steps to an exponent above $n$, after which
    Morrey's inequality is applied on a ball containing $\overline{\Omega}$ giving H\"older exponent $s + 1 - n/p$, which is the value in the statement. If $n/p$ is an integer, the H\"older exponent may be chosen freely in $(0,1)$. A step whose target reciprocal is $0$ requires an
    additional device. We allow a step to take its hypothesis in $L^q$ for every
    $q \geq p$ while its conclusion is stated at the conjugate exponent of $p$.\footnote{\leandecl{EllipticPdes.Embedding.exists_eLpNorm_sobolevConj_le_of_le}.}
    Morrey's inequality produces the continuous representatives and bounds their
    supremum and their H\"older seminorm together, which gives the estimate for
    the full norm. The representatives are chosen once and for all so that they
    are the classical partial derivatives of one another on $\Omega$, as
    membership of $C^{k-1-\lfloor n/p \rfloor, \gamma}(\overline{\Omega})$
    requires, with an induction on the remaining order giving inclusion in all relevant $C^m(\Omega)$.\footnote{\leandecl{EllipticPdes.Embedding.exists_const_contDiffOn_holderOnWith_domain_ideal}
    and
    \leandecl{EllipticPdes.Embedding.exists_const_contDiffOn_holderOnWith_domain_free},
    based on
    \leandecl{EllipticPdes.Embedding.exists_const_contDiffOn_holderOnWith_of_gradClosed_domain}
    and \leandecl{EllipticPdes.Embedding.hasFDerivAt_of_continuousOn_hasWeakGradOn}.}
\end{proof}

The interior theory is stated for a weak solution that is subject
to no boundary condition. Throughout the following three theorems $\Omega \subseteq
\mathbb{R}^n$ is bounded and open, $L$ is an operator of the form \eqref{eqn: divergence form operator} with second-order coefficients satisfying $a^{ij} = a^{ji}$ and
the ellipticity condition \eqref{eqn: ellipticity} with some $\theta > 0$ in place of
$\lambda$. Throughout we write $\ell$ and $\ell'$ for directions of differentiation and
reserve $i$ and $j$ for the coefficient indices and $m$ for the order in
Theorem~\ref{thm: higher interior}. A function $u \in H^1(\Omega)$ is a weak solution of $Lu = f$ in $\Omega$
if
\begin{equation}\label{eqn: local weak form}
    \sum_{i,j=1}^n \int_\Omega a^{ij}\, D_i u\, D_j \varphi
    + \sum_{i=1}^n \int_\Omega b^i\, D_i u\, \varphi + \int_\Omega c\, u\, \varphi
    = \int_\Omega f\, \varphi
    \qquad \text{for every } \varphi \in C_c^\infty(\Omega) .
\end{equation}
The first gain of regularity transfers the regularity of the second-order coefficients
onto the weak solution, giving one derivative more in the interior under a hypothesis on
those coefficients alone.

\begin{theorem}[{Interior $H^2$ regularity, \cites[\S
6.3.1, Theorem~1]{evans-2010-par-dif-equ}}]\label{thm: interior h2}
    Assume $a^{ij} \in C^1(\Omega)$, $b^i, c \in L^\infty(\Omega)$ and
    $f \in L^2(\Omega)$. Suppose that $u \in H^1(\Omega)$ is a weak solution of
    $Lu = f$ in $\Omega$. Then $u \in H^2_{\mathrm{loc}}(\Omega)$ and for each open
    $V \Subset \Omega$
    \begin{equation*}
        \| u \|_{H^2(V)}
            \leq C \bigl( \| f \|_{L^2(\Omega)} + \| u \|_{L^2(\Omega)} \bigr) ,
    \end{equation*}
    where the constant $C$ depends only on $V$, $\Omega$ and the coefficients of $L$.
\end{theorem}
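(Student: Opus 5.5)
The plan is the Nirenberg difference-quotient method, as in the cited proof of Evans. Fix an open $V \Subset \Omega$ and choose an open $W$ with $V \Subset W \Subset \Omega$. Take a cutoff $\zeta \in C_c^\infty(W)$ with $0 \le \zeta \le 1$ and $\zeta = 1$ on $V$. For a direction $\ell$ and $0 < |h| < \tfrac12 \operatorname{dist}(W, \partial\Omega)$, write $D^h_\ell u(x) = (u(x + h e_\ell) - u(x))/h$ and $D^{-h}_\ell$ for the backward quotient.

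\textbf{Step 1: reduce to a principal-part equation.} Move the lower-order terms to the right-hand side. Then $u$ satisfies
\[
\sum_{i,j} \int a^{ij} D_i u\, D_j \varphi = \int \tilde f \varphi, \qquad \tilde f = f - b^i D_i u - c u .
\]
Here $\|\tilde f\|_{L^2} \le C(\|f\|_{L^2} + \|u\|_{H^1(\Omega)})$.

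\textbf{Step 2: test with a difference quotient.} Test with $\varphi = -D^{-h}_\ell(\zeta^2 D^h_\ell u)$. It lies in $H^1_0(\Omega)$, so the identity extends to it from $C^\infty_c$ by density, using the boundedness of the form. Use discrete integration by parts, $\int v\, D^{-h}_\ell w = -\int (D^h_\ell v) w$, together with the product rule
\[
D^h_\ell(a^{ij} D_i u) = a^{ij,h} D^h_\ell D_i u + (D^h_\ell a^{ij}) D_i u,
\]
where $a^{ij,h}(x) = a^{ij}(x + h e_\ell)$. Combining this with ellipticity (constant $\theta$) and $|D^h_\ell a^{ij}| \le \|Da\|_{L^\infty(W')}$ on a slightly larger compact $W'$ gives
\[
\theta \int \zeta^2 |D^h_\ell Du|^2 \le \varepsilon \int \zeta^2 |D^h_\ell Du|^2 + C_\varepsilon \int_W \bigl(|Du|^2 + |D^h_\ell u|^2 + \tilde f^2\bigr).
\]
Bound $\|\varphi\|_{L^2}$ by $\|D(\zeta^2 D^h_\ell u)\|_{L^2}$ using the lemma $\|D^{-h}_\ell w\|_{L^2} \le \|D_\ell w\|_{L^2}$. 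Also $\|D^h_\ell u\|_{L^2(W)} \le \|D u\|_{L^2(\Omega)}$. Absorbing the $\varepsilon$ term yields a bound on $\|D^h_\ell Du\|_{L^2(V)}$ that is uniform in $h$.

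\textbf{Step 3: pass to $H^2$.} Apply the converse difference-quotient lemma: $L^2$ functions whose difference quotients are uniformly bounded in $L^2$ have weak derivatives, extracted by weak compactness of bounded sets in $L^2$. This gives $D_\ell D u \in L^2(V)$ with the same bound. Hence $u \in H^2(V)$ and
\[
\|u\|_{H^2(V)} \le C\bigl(\|f\|_{L^2} + \|u\|_{H^1(\Omega)}\bigr).
\]
Finally, replace $\|u\|_{H^1(\Omega)}$ by $\|u\|_{L^2(\Omega)}$ via the Caccioppoli estimate $\|Du\|_{L^2(W)} \le C(\|f\|_{L^2} + \|u\|_{L^2(\Omega)})$. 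This is obtained by testing with $\zeta^2 u$ on the nested domains and absorbing the first-order term with Peter--Paul, exactly as in Theorem~\ref{thm: garding}. So the cutoff chain is really $V \Subset W \Subset W' \Subset \Omega$.

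\textbf{Main obstacle.} I expect the difference-quotient infrastructure to be the hardest part. This covers the two lemmas (the $L^2$ bound by the weak derivative, and the existence of a weak derivative from a uniform bound) and the justification that $D^{-h}_\ell(\zeta^2 D^h_\ell u)$ is admissible as a test function, including its weak gradient under translation. I would prove the first lemma by mollification and the fundamental theorem of calculus along lines, with Fubini. I would prove the second via Banach--Alaoglu in $L^2$, which Mathlib supplies through weak sequential compactness in Hilbert spaces. The uniform-in-$h$ algebra of Step 2 is then routine but lengthy bookkeeping.
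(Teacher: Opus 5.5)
Your argument is correct. It is essentially Evans's proof: the cutoff sits inside the test function $-D^{-h}_\ell(\zeta^2 D^h_\ell u)$, the lower-order terms are moved into $\tilde f$, and the Caccioppoli estimate is used at the end to trade $\|u\|_{H^1(\Omega)}$ for $\|u\|_{L^2(\Omega)}$.

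The paper uses the same analytic ingredients: difference quotients, ellipticity, Peter--Paul, Caccioppoli, and $C^1$ entering only through a mean-value Lipschitz bound. It organises them differently.

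\begin{itemize}
\item It first proves the estimate for a solution in $H_0^1(\Omega)$ whose coefficients are defined and Lipschitz on all of $\mathbb{R}^n$. There $u$ is extended by zero, so difference quotients act on whole-space $L^2$ with no restriction $|h| < \operatorname{dist}(W,\partial\Omega)$. The gradient term on the right is controlled by testing with $u$ itself.
\item It then reduces the local theorem to this case. The product $\eta u \in H_0^1(\Omega)$ solves $L(\eta u) = F$ with $F = \eta f - a^{ij} D_i u\, D_j\eta - D_j(a^{ij} D_i\eta\, u) + b^i D_i\eta\, u$. The coefficients are replaced by the blends $\theta I + \chi(a - \theta I)$, $\chi b$, $\chi c$.
\item Caccioppoli (testing with $\eta^2 u$) is used to bound the gradient terms of $F$, not to swap norms at the end.
\end{itemize}

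Your route is a single direct argument. Its cost is that the nested chain $V \Subset W \Subset W' \Subset \Omega$ and the admissibility of translated test functions near $\partial\Omega$ must be carried throughout. The paper's route confines the difference-quotient machinery to a clean global setting, which is easier to formalise. Its cutoff reduction is also reused at every order in Theorem~\ref{thm: higher interior}.

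One minor point in your Step 2, which is not a gap. If $a^{ij}$ is only known to be $C^1(\Omega)$ (the formal statement assumes no upper ellipticity bound), the form need not be bounded on all of $H_0^1(\Omega)$. Since $\varphi$ has compact support, extend the identity from $C_c^\infty(\Omega)$ using approximants supported in a fixed compact subset of $\Omega$, where the coefficients are bounded.
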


\paragraph{\leandecl{interior_H2_regularity_evans}}
The Lean theorem is stated as
\begin{quote}
\begin{LeanCode}
theorem interior_H2_regularity_evans {U : Set (EuclideanSpace ℝ (Fin d))} (hU : IsOpen U)
    (_hUb : Bornology.IsBounded U)
    {a : EuclideanSpace ℝ (Fin d) → Fin d → Fin d → ℝ} {b : EuclideanSpace ℝ (Fin d) → Fin d → ℝ}
    {c : EuclideanSpace ℝ (Fin d) → ℝ}
    (ha : ∀ i j, ContDiffOn ℝ 1 (fun x => a x i j) U)
    (hb : ∀ i, MemLp (fun x => b x i) ⊤ (volume.restrict U))
    (hc : MemLp c ⊤ (volume.restrict U)) {θ : ℝ} (hθ : 0 < θ)
    (hell : ∀ᵐ x ∂(volume.restrict U), ∀ ξ : Fin d → ℝ,
      θ * ∑ i, ξ i ^ 2 ≤ ∑ i, ∑ j, a x i j * ξ i * ξ j)
    (_hsymm : ∀ x ∈ U, ∀ i j, a x i j = a x j i)
    {V : Set (EuclideanSpace ℝ (Fin d))} (hVo : IsOpen V) (hVc : IsCompact (closure V))
    (hVU : closure V ⊆ U) :
    ∃ C : ℝ, 0 ≤ C ∧ ∀ (f u : EuclideanSpace ℝ (Fin d) → ℝ)
      (G : Fin d → EuclideanSpace ℝ (Fin d) → ℝ)
      (hf : MemLp f 2 (volume.restrict U)) (hu : MemLp u 2 (volume.restrict U)),
      (∀ i, MemLp (G i) 2 (volume.restrict U)) → HasWeakGradOn U u G →
      LocalWeakSol U a b c f u G →
      ∃ H : HasIteratedWeakDerivOn V 2
          ((hu.mono_measure (Measure.restrict_mono (subset_closure.trans hVU) le_rfl)).toLp u),
        iteratedNorm H ≤ C * (‖hf.toLp f‖ + ‖hu.toLp u‖)
\end{LeanCode}
\end{quote}

\begin{proof}[Proof sketch]
    The proof reduces the statement to the corresponding estimate for a solution in
    $H_0^1(\Omega)$ with globally defined coefficients, which is proved by the
    method of difference quotients. After the weak formulation is localised by a
    cutoff, discrete integration by parts and uniform ellipticity bound the
    difference quotients of $\nabla u$ in $L^2$ uniformly in the step. By the Peter--Paul
    inequality, which Evans calls Cauchy's inequality with $\varepsilon$, the transport term
    is absorbed into the lower bound given by ellipticity, so that no sign condition on $c$ is
    needed and the constant depends on the ellipticity constant and on the suprema of $|b|$
    and $|c|$. Passing to the
    limit, we obtain $\nabla u \in H^1_{\mathrm{loc}}(\Omega)$
    \cites[\S 6.3.1, Theorem~1]{evans-2010-par-dif-equ}[Theorem~8.8]{gilbarg-2001-ell-par-dif}.
    That estimate bounds its lower-order terms by testing the equation with $u$
    itself and by extending $u$ by zero.\footnote{\leandecl{EllipticPdes.Regularity.interior_H2_estimate}.}
    Of the principal coefficients the estimate asks the Lipschitz bound
    $|a^{ij}(x) - a^{ij}(y)| \leq A_1 |x - y|$ alone, which is $W^{1,\infty}$ in
    pointwise form and which bounds the difference quotients of the coefficients in
    the commutator term. The hypothesis $a^{ij} \in C^1(\Omega)$ of Theorem~\ref{thm: interior h2} is
    the one Evans states, and it reaches the formal proof through the mean value
    inequality.\footnote{\leandecl{EllipticPdes.Regularity.IsLipCoeff} and
    \leandecl{EllipticPdes.Regularity.IsC1Coeff.toIsLipCoeff}. Guo
    \cite[Theorem~VIII.3.2]{guo-2026-par-dif-equ} states the interior theory under
    the $W^{1,\infty}$ hypothesis.}

    Two reductions remove the boundary condition and the global coefficients. Let
    $\eta \in C_c^\infty(\Omega)$ be equal to $1$ near $\overline{V}$. For
    $u \in H^1(\Omega)$ the product $\eta u$ lies in $H_0^1(\Omega)$ and is a weak
    solution of $L(\eta u) = F$ with
    \begin{equation*}
        F = \eta f - a^{ij} D_i u\, D_j \eta - D_j\bigl(a^{ij} D_i \eta\, u\bigr)
            + b^i D_i \eta\, u ,
    \end{equation*}
    as a direct computation from \eqref{eqn: local weak form} shows. Setting
    $\varphi = \eta^2 u$ in \eqref{eqn: local weak form}, which is admissible by
    density since $\eta^2 u \in H_0^1(\Omega)$ has compact support in $\Omega$,
    and performing elementary calculations, we discover
    \begin{equation*}
        \int_\Omega \eta^2 |\nabla u|^2
            \leq C \int_\Omega f^2 + u^2 ,
    \end{equation*}
    whence the norm of $F$ is bounded by a multiple of
    $\| f \|_{L^2(\Omega)} + \| u \|_{L^2(\Omega)}$ and the estimate for $\eta u$
    gives the estimate of Theorem~\ref{thm: interior h2}.\footnote{\leandecl{EllipticPdes.Regularity.cutoffMul_mem_H01_of_mem_W12},
    \leandecl{EllipticPdes.Regularity.reduction_weakForm} and
    \leandecl{EllipticPdes.Regularity.caccioppoli_W12}. The inequality above is the
    Caccioppoli inequality for $L$ on the support of $\eta$, and
    \cite[p.~331]{evans-2010-par-dif-equ} states it in these terms without
    naming it, in order to replace $\| u \|_{H^1}$ by $\| u \|_{L^2}$ on the
    right of the estimate. Here it bounds instead the gradient terms of the
    datum $F$, which the reduction to $H_0^1(\Omega)$ introduces and which the
    argument of the source never forms.} The coefficients are
    required only on the support of $\eta$. With a second cutoff $\chi$ equal to $1$
    there we replace $a$ by $\theta I + \chi(a - \theta I)$ and $b$, $c$ by $\chi b$,
    $\chi c$, which gives coefficients on $\mathbb{R}^n$ with the ellipticity
    constant $\theta$, bounded derivatives and the original values near
    $\overline{V}$. For $b^i, c \in L^\infty(\Omega)$ the formal statement first
    passes to measurable representatives equal to them almost
    everywhere.\footnote{\leandecl{EllipticPdes.Regularity.exists_localOp_C1} and
    \leandecl{EllipticPdes.Regularity.exists_family_of_localWeakSol}.}

    The formal statement expresses $H^2_{\mathrm{loc}}(\Omega)$ by a family of weak
    derivatives on $V$ and measures $\| u \|_{H^2(V)}$ by a sum over lists of
    directions, in which a mixed derivative is counted once for each ordering, so
    that the norm is equivalent to Evans's with constants depending only on $n$. The estimate
    is proved one pair of directions $\ell, \ell'$ at a time, bounding
    $\| \partial_\ell \partial_{\ell'} u \|_{L^2(V)}
    + \| \partial_{\ell'} u \|_{L^2(V)} + \| u \|_{L^2(V)}$ by the right-hand side of the
    estimate, and the sum over the pairs is where the factor depending only on $n$ enters. The
    constant is quantified before $f$ and $u$ and therefore depends on neither. The
    weak formulation is tested against $C_c^\infty(\Omega)$ in place of
    $H_0^1(\Omega)$. This is a weaker hypothesis, since for coefficients that are
    only of class $C^1$ in $\Omega$ the form $B[u, v]$ need not be defined for every
    $v \in H_0^1(\Omega)$. The boundedness of $\Omega$ and the symmetry of the coefficients $a^{ij}$ are
    assumed only because Evans assumes them, and the proof uses neither.
\end{proof}

With higher regularity assumptions on the coefficients and data, the interior estimate can be iterated:

\begin{theorem}[{Higher interior regularity, \cite[\S 6.3.1,
Theorem~2]{evans-2010-par-dif-equ}}]\label{thm: higher interior}
    Let $m$ be a nonnegative integer and assume $a^{ij}, b^i, c \in C^{m+1}(\Omega)$
    and $f \in H^m(\Omega)$. Suppose that $u \in H^1(\Omega)$ is a weak solution of
    $Lu = f$ in $\Omega$. Then $u \in H^{m+2}_{\mathrm{loc}}(\Omega)$ and for each
    open $V \Subset \Omega$
    \begin{equation*}
        \| u \|_{H^{m+2}(V)}
            \leq C \bigl( \| f \|_{H^m(\Omega)} + \| u \|_{L^2(\Omega)} \bigr) ,
    \end{equation*}
    where the constant $C$ depends only on $m$, $\Omega$, $V$ and the coefficients of
    $L$.
\end{theorem}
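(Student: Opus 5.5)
We argue by induction on $m$, with the case $m = 0$ being Theorem~\ref{thm: interior h2}. We carry the statement for all open $V \Subset \Omega$ simultaneously, since each step needs the previous one on a slightly larger set. Assume the result for $m - 1$, and fix $V \Subset W \Subset \Omega$ with $W$ open. The induction hypothesis, applied on $W$ with coefficients in $C^{m}(\Omega)$ and $f \in H^{m-1}(\Omega)$, gives $u \in H^{m+1}_{\mathrm{loc}}(\Omega)$ and
\[
\|u\|_{H^{m+1}(W)} \leq C\bigl(\|f\|_{H^{m-1}(\Omega)} + \|u\|_{L^2(\Omega)}\bigr).
\]

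For a multi-index $\alpha$ with $|\alpha| = m$, set $\tilde u = D^\alpha u \in H^1(W)$. We show that $\tilde u$ is a weak solution of $L\tilde u = \tilde f$ in $W$, where
\[
\tilde f = D^\alpha f - \sum_{\beta \leq \alpha,\, \beta \neq \alpha} \binom{\alpha}{\beta}\Bigl( -D_j\bigl(D^{\alpha-\beta}a^{ij} D_i D^\beta u\bigr) + D^{\alpha-\beta}b^i D_i D^\beta u + D^{\alpha-\beta}c\, D^\beta u\Bigr).
\]
To verify this, we test \eqref{eqn: local weak form} with $(-1)^{|\alpha|}D^\alpha\varphi$ for $\varphi \in C_c^\infty(W)$ and move derivatives onto $u$ using the Leibniz rule. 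This is legitimate because $u \in H^{m+1}(W)$ and the coefficients lie in $C^{m+1}$, so the products have the required weak derivatives. Every term of $\tilde f$ involves at most $m+1$ derivatives of $u$. Hence
\[
\|\tilde f\|_{L^2(W)} \leq C\bigl(\|f\|_{H^m(\Omega)} + \|u\|_{H^{m+1}(W)}\bigr).
\]
Here the divergence term $D_j(\cdots)$ is expanded once more by the product rule. It is precisely this expansion that uses $a^{ij} \in C^{m+1}$ and not merely $C^m$. The coefficients of the differentiated equation are those of $L$, so Theorem~\ref{thm: interior h2} applies on $W$ with $V \Subset W$. It gives $\tilde u \in H^2(V)$ with
\[
\|\tilde u\|_{H^2(V)} \leq C\bigl(\|\tilde f\|_{L^2(W)} + \|\tilde u\|_{L^2(W)}\bigr),
\]
and combining this with the induction hypothesis closes the step.

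In the library's framework we would encode this argument through the weak-gradient-closed families used in Theorem~\ref{thm: sobolev}. The family of weak derivatives $D^\beta u$ is extended by one depth at each step, which lets us run a single induction without indexing multi-indices explicitly. The constant is taken uniform in $f$ and $u$, as in Theorem~\ref{thm: interior h2}. The chain $V \Subset W_m \Subset \dots \Subset W_0 = \Omega$ would be fixed in advance, for instance $W_k = \{x : \operatorname{dist}(x, V) < (m+1-k)\delta\}$, so that the constant depends only on $m$, $V$, $\Omega$ and the coefficients.

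We expect the main obstacle to be the identification of the differentiated weak equation. This requires justifying the commutation of $D^\alpha$ with the bilinear form against test functions, and establishing that products such as $D^{\alpha-\beta}a^{ij}\,D_iD^\beta u$ have weak derivatives equal to the Leibniz expansion on $W$. We would handle this one derivative at a time, using the case $|\alpha| = 1$ as the inductive step: differentiate the equation in a single direction $\ell$, obtain an equation for $D_\ell u$ whose coefficients are those of $L$ and whose right-hand side has one fewer derivative on $f$, and then apply the induction hypothesis at order $m - 1$ to $D_\ell u$. This recursion also matches the successor-map encoding of the library. We would check the product rule for weak derivatives against $C^1$ coefficients first, since the product rule already available from the cutoff argument in Theorem~\ref{thm: interior h2} covers only smooth cutoffs.
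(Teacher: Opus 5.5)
Your proposal is correct, but the argument you write out in detail is not the paper's; only your closing paragraph matches it.

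Your main body follows the multi-index route of Evans and Guo. The inductive hypothesis only places $u$ in $H^{m+1}(W)$. You derive $L(D^\alpha u) = \tilde f$ for $|\alpha| = m$ by testing with $(-1)^{|\alpha|}D^\alpha\varphi$, and then call only the base case, Theorem~\ref{thm: interior h2}, on $D^\alpha u$.

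The paper explicitly sets this route aside. It differentiates in a single direction $\ell$ and obtains $L(\partial_\ell u) = \partial_\ell f + R_\ell$ with $R_\ell \in H^{m-1}$. It then applies the order-$(m-1)$ statement itself to $\partial_\ell u$ on an intermediate set $\overline V \subseteq W \Subset \Omega$ and recombines the directions. This is exactly the recursion you sketch at the end.

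The two routes trade off differently. Yours needs the differentiated weak formulation at full order $m$, with multi-index Leibniz bookkeeping, but only ever invokes the $H^2$ theorem. The paper's needs the differentiated equation only at first order, since higher Leibniz rules enter only through the $H^{m-1}$ bound on $R_\ell$. It also fits the library's encoding of $H^k$ by lists of directions. The price is that the full inductive statement, not just the base case, must apply on the intermediate domain.

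The paper also builds the argument around a cutoff reduction. The core induction is proved for a solution in $H_0^1(\Omega)$ with globally defined $W^{k,\infty}$ coefficients, namely $a^{ij} \in W^{m+1,\infty}$ and $b^i, c \in W^{m,\infty}$, which is consistent with your derivative count. A solution in $H^1(\Omega)$ is then handled through $L(\eta u) = F$, together with the cutoff blend of the coefficients. Here $F \in H^m$ is bounded using the order-$(m-1)$ conclusion near $\operatorname{supp}\eta$.

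Working directly with the local weak formulation on nested subdomains, as you do, is equally valid, since the base case holds on any bounded open set. It also spares you the computation of $F$.
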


\paragraph{\leandecl{higher_interior_regularity_evans}}
The Lean theorem is stated as
\begin{quote}
\begin{LeanCode}
theorem higher_interior_regularity_evans {U : Set (EuclideanSpace ℝ (Fin d))} (hU : IsOpen U)
    (_hUb : Bornology.IsBounded U) (m : ℕ)
    {a : EuclideanSpace ℝ (Fin d) → Fin d → Fin d → ℝ} {b : EuclideanSpace ℝ (Fin d) → Fin d → ℝ}
    {c : EuclideanSpace ℝ (Fin d) → ℝ}
    (ha : ∀ i j, ContDiffOn ℝ (m + 1 : ℕ) (fun x => a x i j) U)
    (hb : ∀ i, ContDiffOn ℝ (m + 1 : ℕ) (fun x => b x i) U)
    (hc : ContDiffOn ℝ (m + 1 : ℕ) c U) {θ : ℝ} (hθ : 0 < θ)
    (hell : ∀ᵐ x ∂(volume.restrict U), ∀ ξ : Fin d → ℝ,
      θ * ∑ i, ξ i ^ 2 ≤ ∑ i, ∑ j, a x i j * ξ i * ξ j)
    (_hsymm : ∀ x ∈ U, ∀ i j, a x i j = a x j i)
    {V : Set (EuclideanSpace ℝ (Fin d))} (hVo : IsOpen V) (hVc : IsCompact (closure V))
    (hVU : closure V ⊆ U) :
    ∃ C : ℝ, 0 ≤ C ∧ ∀ (f u : EuclideanSpace ℝ (Fin d) → ℝ)
      (G : Fin d → EuclideanSpace ℝ (Fin d) → ℝ)
      (hf : MemLp f 2 (volume.restrict U)) (hu : MemLp u 2 (volume.restrict U)),
      (∀ i, MemLp (G i) 2 (volume.restrict U)) →
      ∀ Hf : HasIteratedWeakDerivOn U m (hf.toLp f), HasWeakGradOn U u G →
      LocalWeakSol U a b c f u G →
      ∃ H : HasIteratedWeakDerivOn V (m + 2)
          ((hu.mono_measure (Measure.restrict_mono (subset_closure.trans hVU) le_rfl)).toLp u),
        iteratedNorm H ≤ C * (iteratedNorm Hf + ‖hu.toLp u‖)
\end{LeanCode}
\end{quote}

\begin{proof}[Proof sketch]
    The proof proceeds by induction on $m$ whose base case is
    Theorem~\ref{thm: interior h2}. For a solution in $H_0^1(\Omega)$ with globally
    defined coefficients we differentiate the weak formulation in one direction
    $\ell$ at a time. A weak solution of $L u = f$ gives a weak solution of
    $L (\partial_\ell u) = \partial_\ell f + R_\ell$, where $R_\ell$ collects the
    terms in which a derivative falls on a coefficient and lies in $H^{m-1}(\Omega)$ once
    the conclusion at order $m - 1$ is known, each of them pairing a derivative of a
    coefficient with a derivative of $u$ of order at most two. Applying the inductive hypothesis to
    $\partial_\ell u$ on an intermediate set $\overline{V} \subseteq W \Subset
    \Omega$ and recombining the directions, we obtain $u \in H^{m+2}(V)$
    \cite[\S 6.3.1, Theorem~2]{evans-2010-par-dif-equ}. This argument differs from
    the argument of \cite{guo-2026-par-dif-equ}, which differentiates by a multi-index
    $\alpha$ with $|\alpha| = m+1$ in a single step and tests the weak formulation
    against $(-1)^{|\alpha|} D^\alpha \varphi$, because differentiating in one
    direction at a time allows each step to apply the statement already proved. In
    that statement the coefficients are of the $W^{k,\infty}$ type used by
    \cite[Theorem~VIII.3.2]{guo-2026-par-dif-equ}, with $a^{ij} \in W^{m+1,\infty}(\Omega)$ and $b^i, c \in W^{m,\infty}(\Omega)$, since no step of
    the argument requires a coefficient of the differentiated equation to be
    continuous. At $m = 0$ the bundle of order one is passed so that the order is
    stated uniformly in $m$, and it is never read, since the base case is
    Theorem~\ref{thm: interior h2} with its $W^{1,\infty}$
    hypothesis.\footnote{\leandecl{EllipticPdes.Regularity.higher_interior_regularity},
    whose step is \leandecl{EllipticPdes.Regularity.interiorRegularityAt_succ}. The
    implication \texttt{IsCkCoeff.toIsWkInftyCoeff} shows that the classical
    hypothesis is the stronger one.}

    For a solution in $H^1(\Omega)$ the same cutoff reduction as in the proof of
    Theorem~\ref{thm: interior h2} applies at each order. The datum $F$ of
    $L(\eta u) = F$ lies in $H^m$ with a bound by
    $\| f \|_{H^m(\Omega)} + \| u \|_{L^2(\Omega)}$ once the conclusion at order
    $m - 1$ is known near the support of $\eta$, and the coefficients of class
    $C^{m+1}(\Omega)$ are localised by the cutoff blend of that proof, which
    preserves their order.\footnote{\leandecl{EllipticPdes.Regularity.higher_interior_regularity_W12},
    \leandecl{EllipticPdes.Regularity.exists_reductionDatum} and
    \leandecl{EllipticPdes.Regularity.exists_localOp_Ck}.} The formal statement
    expresses $f \in H^m(\Omega)$ and the conclusion by families of weak derivatives
    measured by the norm of Theorem~\ref{thm: interior h2}, and its remaining
    hypotheses are those of that theorem.
\end{proof}

Applying the estimate at every order, we obtain a smooth representative:

\begin{theorem}[{Infinite differentiability in the interior, \cite[\S 6.3.1,
Theorem~3]{evans-2010-par-dif-equ}}]\label{thm: interior smooth}
    Assume $a^{ij}, b^i, c \in C^\infty(\Omega)$ and $f \in C^\infty(\Omega)$. Suppose
    that $u \in H^1(\Omega)$ is a weak solution of $Lu = f$ in $\Omega$. Then
    $u \in C^\infty(\Omega)$.
\end{theorem}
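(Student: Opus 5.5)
The plan is to combine Theorem~\ref{thm: higher interior} at every order $m$ with the Sobolev embedding of Theorem~\ref{thm: sobolev}, working locally on balls. Smoothness is a local property, so we fix $x_0 \in \Omega$ and a ball $B = B_r(x_0)$ with $\overline{B_{2r}(x_0)} \subseteq \Omega$. We will show that $u$ agrees almost everywhere on $B$ with a $C^k$ function for every $k$. Then we patch these representatives.

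\textbf{Step 1: interior Sobolev bounds of every order.} The hypotheses of Theorem~\ref{thm: higher interior} require $f \in H^m(\Omega)$ and coefficients in $C^{m+1}(\Omega)$. On all of $\Omega$ the first may fail, since $f \in C^\infty(\Omega)$ need not have derivatives in $L^2(\Omega)$. We therefore replace $\Omega$ by $W = B_{2r}(x_0)$. On $\overline{W}$ every derivative of $f, a^{ij}, b^i, c$ is bounded, so $f \in H^m(W)$ for every $m$. The restriction of $u$ lies in $H^1(W)$ and is a weak solution of $Lu = f$ in $W$, because $C_c^\infty(W) \subseteq C_c^\infty(\Omega)$ in \eqref{eqn: local weak form}. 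Ellipticity and symmetry restrict trivially. Applying Theorem~\ref{thm: higher interior} with $V = B$ then gives $u \in H^{m+2}(B)$ for every $m$. In the family language of the library, we obtain for each $m$ a family of iterated weak derivatives on $B$ of depth $m+2$, all in $L^2(B)$.

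\textbf{Step 2: embedding into $C^k$.} A ball is open and bounded with $C^1$ boundary. We apply case~(ii) of Theorem~\ref{thm: sobolev} with $p = 2$ and order $m+2 > n/2$. This yields a representative in $C^{m+1-\lfloor n/2\rfloor,\gamma}(\overline{B})$, and hence $u$ agrees a.e.\ on $B$ with a $C^{k}$ function for $k = m+1-\lfloor n/2\rfloor$. The formal version is \leandecl{exists_const_contDiffOn_holderOnWith_domain_ideal} with $s = \lfloor (n-1)/2 \rfloor$, or its free variant, which supplies \texttt{ContDiffOn} of every order allowed by the depth. If $n = 1$, that theorem's hypothesis $1 < d$ fails. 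In that case we either use a direct one-dimensional argument via the fundamental theorem of calculus for weak derivatives, or embed into two dimensions by taking a product with an interval. I would treat this as a side case.

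\textbf{Step 3: consistency and gluing.} Two continuous functions that agree a.e.\ on an open set agree everywhere there. Hence the $C^k$ representatives for different $m$ coincide on $B$, giving a single $C^\infty$ representative $\tilde u_B$. Likewise, representatives on overlapping balls coincide on the overlaps. Covering $\Omega$ by such balls then defines $\tilde u \in C^\infty(\Omega)$ with $\tilde u = u$ a.e.

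\textbf{Main obstacle.} I expect the main obstacle to be the bookkeeping between the two interfaces rather than the analysis. Theorem~\ref{thm: higher interior} outputs a \texttt{HasIteratedWeakDerivOn} family indexed by lists of directions. Theorem~\ref{thm: sobolev} instead consumes a gradient-closed family with index type, successor map and depth. So I would first build a converter: take the index type to be lists of length at most $m+2$, the successor to append a direction, and the depth to be the list length. Each weak-gradient relation comes from the iterated weak-derivative structure restricted to $B$. The uniform choice of representatives across all $m$, needed for the conclusion that $\tilde u \in C^\infty$ rather than only $C^k$ for every $k$, then follows from the continuity-uniqueness argument of Step~3.
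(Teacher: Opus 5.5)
Your proof is correct, but it passes from weak to classical derivatives by a different route from the paper's.

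\textbf{Localisation.} This is essentially the same idea in both arguments. The paper blends the coefficients with a cutoff into global ones with bounded derivatives and replaces $f$ by $\chi f$. You instead shrink the domain to $B_{2r}(x_0)$, so that $f \in H^m$ there, and invoke Theorem~\ref{thm: higher interior} as stated.

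\textbf{Embedding.} This is where the routes differ. You apply the domain form of Theorem~\ref{thm: sobolev}(ii) on the ball once for each finite $m$, and recover a single $C^\infty$ representative by uniqueness of continuous representatives. The paper never touches a boundary. It runs the local Gagliardo--Nirenberg--Sobolev inequality (a cutoff between two concentric balls, with no hypothesis on $\partial\Omega$) along one family closed under differentiation of all orders, in half-steps of $1/(2n)$. It then applies Morrey's inequality on the inner ball and identifies the continuous representatives with classical derivatives by mollification.

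\textbf{Comparison.} Your route is more modular, since it reuses two stated theorems as black boxes. It also has costs.
\begin{itemize}
\item It imports the extension operator for $C^1$ domains. Formally, you would also need to check that a ball satisfies \leandecl{HasC1Boundary}.
\item It forces the separate case $n=1$, because Theorem~\ref{thm: sobolev} assumes $n\geq 2$.
\item It runs into the borderline case $n/2\in\mathbb{N}$.
\end{itemize}
The paper's half-step bootstrap never has a step with target reciprocal $0$, which it can afford precisely because derivatives of every order are available. It also treats all orders in a single family rather than one application per $m$.

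\textbf{Correction to your Lean remark.} For even $n$, the choice $s=\lfloor (n-1)/2\rfloor = n/2-1$ gives $2(s+1)=n$. This violates the hypothesis $n < p_0(s+1)$ of \leandecl{exists_const_contDiffOn_holderOnWith_domain_ideal} at $p_0=2$. So that declaration does not apply as you state it. You must either use the free variant, or lower $p_0$ slightly below $2$ on the bounded ball and take $s=n/2$. The latter costs one extra derivative, which is harmless here.
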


\paragraph{\leandecl{exists_contDiffOn_of_weakSolution_evans}}
The Lean theorem is stated as
\begin{quote}
\begin{LeanCode}
theorem exists_contDiffOn_of_weakSolution_evans {d : ℕ} {U : Set (EuclideanSpace ℝ (Fin d))}
    (hU : IsOpen U) (_hUb : Bornology.IsBounded U)
    {a : EuclideanSpace ℝ (Fin d) → Fin d → Fin d → ℝ} {b : EuclideanSpace ℝ (Fin d) → Fin d → ℝ}
    {c f u : EuclideanSpace ℝ (Fin d) → ℝ} {G : Fin d → EuclideanSpace ℝ (Fin d) → ℝ}
    (ha : ∀ i j, ContDiffOn ℝ (⊤ : ℕ∞) (fun x => a x i j) U)
    (hb : ∀ i, ContDiffOn ℝ (⊤ : ℕ∞) (fun x => b x i) U)
    (hc : ContDiffOn ℝ (⊤ : ℕ∞) c U) (hf : ContDiffOn ℝ (⊤ : ℕ∞) f U)
    {θ : ℝ} (hθ : 0 < θ)
    (hell : ∀ᵐ x ∂(volume.restrict U), ∀ ξ : Fin d → ℝ,
      θ * ∑ i, ξ i ^ 2 ≤ ∑ i, ∑ j, a x i j * ξ i * ξ j)
    (_hsymm : ∀ x ∈ U, ∀ i j, a x i j = a x j i)
    (hu : MemLp u 2 (volume.restrict U)) (hG : ∀ i, MemLp (G i) 2 (volume.restrict U))
    (hgrad : HasWeakGradOn U u G) (hsol : LocalWeakSol U a b c f u G) :
    ∃ u' : EuclideanSpace ℝ (Fin d) → ℝ,
      ContDiffOn ℝ (⊤ : ℕ∞) u' U ∧ u' =ᵐ[volume.restrict U] u
\end{LeanCode}
\end{quote}

\begin{proof}[Proof sketch]
    Let $B$ be a closed ball contained in $\Omega$. The cutoff in the proof of
    Theorem~\ref{thm: interior h2} gives coefficients on $\mathbb{R}^n$ with bounded
    derivatives of every order that agree with the $a^{ij}$, $b^i$ and $c$ near $B$, and the
    datum $\chi f$ is smooth with compact support. By
    Theorem~\ref{thm: higher interior} at every order the solution has weak
    derivatives of every order in $L^2$ near $B$. To pass from weak derivatives of
    every order to a classical smooth representative, we apply the
    Gagliardo--Nirenberg--Sobolev inequality along a family of functions closed
    under differentiation in steps of $1/(2n)$. Morrey's inequality gives a
    continuous representative of each derivative that a mollification argument
    identifies with a classical derivative
    \cite[\S 6.3.1, Theorem~3]{evans-2010-par-dif-equ}.\footnote{\leandecl{EllipticPdes.Regularity.interior_smooth}
    and \leandecl{EllipticPdes.Regularity.interior_smooth_W12}, based on
    \leandecl{EllipticPdes.Embedding.contDiffOn_of_gradClosed}.} Two continuous
    functions that agree almost everywhere on an open set agree on it, so the
    representatives obtained on the balls agree on their overlaps and define a
    single function of class $C^\infty(\Omega)$ that is equal to $u$ almost
    everywhere.\footnote{\leandecl{EllipticPdes.Regularity.exists_contDiffOn_of_closedBall_ae},
    based on \leandecl{EllipticPdes.Regularity.exists_contDiffOn_of_locally_ae}, and
    the statement for a local weak solution with no boundedness or symmetry
    assumption \leandecl{EllipticPdes.Regularity.exists_contDiffOn_of_localWeakSol}.}
    The formal statement reads $u \in C^\infty(\Omega)$ as the existence of such a
    representative, tests the weak formulation against $C_c^\infty(\Omega)$ as in
    Theorem~\ref{thm: interior h2}, and uses neither the boundedness of $\Omega$ nor
    the symmetry of the $a^{ij}$. The declaration on which it rests differs from the cited statement
    in three respects. Evans assumes $a^{ij}, b^i, c, f \in C^\infty(\Omega)$ on a bounded
    open set, where the declaration takes coefficients with bounded weak derivatives of
    every order and a datum with weak derivatives of every order in $L^2(\Omega)$. Evans
    assumes $u \in H^1(\Omega)$, where the declaration takes $u \in H_0^1(\Omega)$ and the
    cutoff reduction of Theorem~\ref{thm: interior h2} recovers the weaker hypothesis. Evans
    concludes $u \in C^\infty(\Omega)$ for one representative, where the declaration gives
    one on the interior of each $V \Subset \Omega$ and the gluing above assembles them into
    a single representative on $\Omega$. Under the $W^{k,\infty}$ hypotheses the statement asks of
    $a^{ij}$ one thing beyond the bundles, the $W^{1,\infty}$ hypothesis of
    Theorem~\ref{thm: interior h2}, which is read by the base case. Its redundancy
    is the statement that a $W^{1,\infty}$ function has a Lipschitz representative,
    which Mathlib does not prove, so the statement asks for the hypothesis and the
    bundle of order one separately.
\end{proof}

Each of the three statements above is proved in the library in the form that Guo and that Gilbarg
and Trudinger state, from which Theorems~\ref{thm: interior h2}, \ref{thm: higher interior}
and~\ref{thm: interior smooth} follow by the cutoff reduction. For a weak solution $u \in H_0^1(\Omega)$ of \eqref{eqn: dirichlet problem}
with $a^{ij}$ Lipschitz on $\mathbb{R}^n$ and $b^i, c$ bounded, the second weak
derivatives exist in $L^2(V)$ on every compact $V \subseteq \Omega$ and are bounded one
pair of directions at a time, with the constant quantified before $u$ and $f$
\cites[Theorem~VIII.2.2]{guo-2026-par-dif-equ}[Theorem~8.8]{gilbarg-2001-ell-par-dif}.\footnote{\leandecl{EllipticPdes.Regularity.interior_H2_estimate}.}
With $a^{ij} \in W^{m+1,\infty}(\Omega)$ and $b^i, c \in W^{m,\infty}(\Omega)$
with $f \in H^m(\Omega)$, the
solution has weak derivatives to order $m + 2$ on every such $V \Subset \Omega$, each bounded by
$C(\|f\|_{H^m(\Omega)} + \| u \|_{L^2(\Omega)})$
\cite[Theorem~VIII.3.2]{guo-2026-par-dif-equ}.\footnote{\leandecl{EllipticPdes.Regularity.higher_interior_regularity}.}
With those hypotheses at every order the solution has a representative smooth on the
interior of each such $V$
\cites[Theorem~VIII.3.3]{guo-2026-par-dif-equ}[Corollary~8.11]{gilbarg-2001-ell-par-dif}.\footnote{\leandecl{EllipticPdes.Regularity.interior_smooth}.}
In all three statements above the hypothesis on the principal coefficients is $W^{1,\infty}$, which is what
the difference-quotient proof reads and what
\cite[Theorem~8.8]{gilbarg-2001-ell-par-dif} asks.

We thus obtain a
classical solvability statement:

\begin{corollary}[{Classical solvability in the interior,
\cites[\S 6.2.2, Theorem~3]{evans-2010-par-dif-equ}[\S 6.3.1,
Theorem~3]{evans-2010-par-dif-equ}}]\label{cor: classical solvability}
    Let $\Omega \subseteq \mathbb{R}^n$ be bounded and open, $L$ be the operator \eqref{eqn: divergence form operator}, uniformly elliptic in the sense of \eqref{eqn: ellipticity} with $a^{ij} \in C^1(\Omega)$, $a^{ij}, b^i, c \in W^{k,\infty}(\Omega)$, and $f \in H^k(\Omega)$ for every $k \in \mathbb{N}$, and suppose that $b \equiv 0$ and $c \geq 0$ almost everywhere. Then, \eqref{eqn: dirichlet problem} has a unique weak solution $u \in H_0^1(\Omega)$ whose class has, on the interior of each
    compact $V \subseteq \Omega$, a representative that is smooth there and satisfies $Lu = f$
    pointwise almost everywhere there.
\end{corollary}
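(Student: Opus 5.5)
The proof combines Theorem~\ref{thm: main} for existence and uniqueness with the interior regularity theory for smoothness, and then identifies the equation pointwise.

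\emph{Weak solution.} Since $b = 0$ and $c \geq 0$ almost everywhere on the bounded set $\Omega$, Theorem~\ref{thm: main} applies with $f \in L^2(\Omega)$, which holds because $f \in H^0(\Omega)$. It yields a unique $u \in H_0^1(\Omega)$ with $B[u,v] = \langle f, v\rangle_{L^2(\Omega)}$ for all $v \in H_0^1(\Omega)$. Restricting $v$ to $C_c^\infty(\Omega) \subseteq H_0^1(\Omega)$ shows that $u$, with its weak gradient, is a local weak solution in the sense of \eqref{eqn: local weak form}. Since $u \in H^1(\Omega)$, the interior theory applies.

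\emph{Interior smoothness.} Fix a compact $V \subseteq \Omega$. I would apply the form of Theorem~\ref{thm: interior smooth} stated after it, namely \leandecl{EllipticPdes.Regularity.interior_smooth_W12}. Its hypotheses are:
\begin{itemize}
\item[(a)] coefficients with bounded weak derivatives of every order, supplied by $a^{ij}, b^i, c \in W^{k,\infty}(\Omega)$ for all $k$;
\item[(b)] a datum with weak derivatives of every order in $L^2$, supplied by $f \in H^k(\Omega)$ for all $k$;
\item[(c)] the $W^{1,\infty}$, i.e.\ Lipschitz, hypothesis on $a^{ij}$ needed by the base case Theorem~\ref{thm: interior h2}.
\end{itemize}
Hypothesis (c) is where $a^{ij} \in C^1(\Omega)$ enters. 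Near a compact set it gives the Lipschitz bound through the mean value inequality, as in \leandecl{IsC1Coeff.toIsLipCoeff}, after the cutoff blend $\theta I + \chi(a - \theta I)$. The conclusion is a representative $\tilde u$ of $u$ that is $C^\infty$ on the interior of $V$. By Theorem~\ref{thm: higher interior}, $u$ also has weak derivatives of every order in $L^2$ there. Hence the classical derivatives of $\tilde u$ coincide almost everywhere with the weak derivatives of $u$.

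\emph{Pointwise equation.} On $\operatorname{int} V$, take $\varphi \in C_c^\infty(\operatorname{int} V)$ in \eqref{eqn: local weak form}. Since $\tilde u$ is smooth, the lower-order terms are ordinary integrals against $\varphi$. The principal term is $\int a^{ij} D_i\tilde u\, D_j\varphi$, where $a^{ij} D_i \tilde u$ lies in $W^{1,\infty}_{\mathrm{loc}}$ or is $C^1$. Integrating by parts in this term moves $D_j$ onto $a^{ij} D_i\tilde u$, using the weak product rule with the weak derivative of $a^{ij}$. This gives
\[
\int \bigl(-D_j(a^{ij}D_i\tilde u) + b^i D_i\tilde u + c\tilde u - f\bigr)\varphi = 0
\]
for all such $\varphi$. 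The integrand is locally integrable, so the fundamental lemma of the calculus of variations gives $L\tilde u = f$ almost everywhere on $\operatorname{int} V$.

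The main obstacle is this last step at the level of formalisation. $L\tilde u$ involves $D_j a^{ij}$, which is only a weak, or $C^1$, derivative, and the chosen representative of $a^{ij}$ may differ from the continuous one on a null set. I would first replace $a^{ij}$ by its $C^1$ representative on $\Omega$, which is valid since the weak form depends only on a.e.\ classes. I would then use the classical product rule, so that $Lu$ is a genuine pointwise expression, and invoke the fundamental lemma, which is available in Mathlib as \texttt{ae\_eq\_zero\_of\_integral\_smooth\_smul\_eq\_zero}.
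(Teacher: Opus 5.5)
Your proposal is correct and follows the paper's own proof. You obtain the unique weak solution from Theorem~\ref{thm: main}, a smooth representative on the interior of each compact $V$ from the $W^{k,\infty}$ form of the interior smoothness result (with $a^{ij}\in C^1$ supplying the Lipschitz base-case hypothesis), and $Lu=f$ almost everywhere by differentiating $a^{ij}D_i u$ classically and applying the fundamental lemma. The only, inessential, difference is that the paper applies the smoothness declaration directly to $u\in H_0^1(\Omega)$ with the global bilinear form, rather than passing through the local weak formulation and the $H^1(\Omega)$ version of that result.
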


\paragraph{\leandecl{exists_weakSolution_interior_classical}}
The Lean theorem is stated as
\begin{quote}
\begin{LeanCode}
theorem exists_weakSolution_interior_classical {n : ℕ}
    (Op : FullEllipticOp (n + 1)) {Ω : Set (EuclideanSpace ℝ (Fin (n + 1)))}
    (hΩm : MeasurableSet Ω) (hΩo : IsOpen Ω) (hΩb : Bornology.IsBounded Ω)
    (hb : ∀ i, ∀ᵐ x ∂(volume.restrict Ω), Op.b x i = 0)
    (hc : ∀ᵐ x ∂(volume.restrict Ω), 0 ≤ Op.c x)
    (hA1 : IsC1Coeff Op.toEllipticCoeff)
    (hA : ∀ k : ℕ, IsWkInftyCoeff Op.toEllipticCoeff k)
    (hbc : ∀ k : ℕ, IsWkInftyLower Op k)
    (f : L2D Ω)
    (hf : ∀ k : ℕ, ∃ hfk : HasIteratedWeakDerivOn Ω k f, ∃ M : ℝ, IteratedL2Bound hfk M)
    {V : Set (EuclideanSpace ℝ (Fin (n + 1)))} (hVc : IsCompact V) (hVΩ : V ⊆ Ω) :
    ∃ u : H01 Ω,
      (∀ v : H01 Ω, Op.fullBilin Ω u v = ∫ x in Ω, (f x : ℝ) * ((v : H1amb Ω) 0 x : ℝ)) ∧
      ∃ u' : EuclideanSpace ℝ (Fin (n + 1)) → ℝ,
        u' =ᵐ[volume.restrict (interior V)]
            (extendL2 hΩm ((u : H1amb Ω) 0) : EuclideanSpace ℝ (Fin (n + 1)) → ℝ) ∧
          ContDiffOn ℝ (⊤ : ℕ∞) u' (interior V) ∧
          ∀ᵐ x ∂volume, x ∈ interior V →
            -(∑ i, ∑ j, partialD j (fun y => Op.a y i j * partialD i u' y) x)
              + ∑ i, Op.b x i * partialD i u' x + Op.c x * u' x = f x
\end{LeanCode}
\end{quote}

\begin{proof}[Proof sketch]
    Theorem~\ref{thm: main} provides the unique weak solution and
    Theorem~\ref{thm: interior smooth} provides its smooth representatives on each compactly contained subset of $\Omega$. The fundamental
    lemma of the calculus of variations shows that the weak formulation implies $Lu = f$ pointwise on the interior. The pointwise equation differentiates $a^{ij} D_i u$ classically, and the formal statement therefore also takes $a^{ij} \in C^1(\Omega)$. The hypotheses $b \equiv 0$ and
    $c \geq 0$ make the associated bilinear form coercive with $\gamma = 0$ in
    Theorem~\ref{thm: garding}; these hypotheses enter the proof only through
    Theorem~\ref{thm: main} and are not used by
    Theorem~\ref{thm: interior smooth}.\footnote{\leandecl{EllipticPdes.Regularity.exists_weakSolution_interior_classical}
    which combines \leandecl{EllipticPdes.Sobolev.FullEllipticOp.weak_solution_L2_of_nonneg_zeroth_of_bounded}
    with \leandecl{EllipticPdes.Regularity.interior_smooth} and the pointwise step.
    The smoothness statement alone is \leandecl{EllipticPdes.Regularity.exists_weakSolution_interior_smooth}.}
\end{proof}
\section{Structure of the Lean library}\label{sec: library}

In this section we describe the Lean library in which the proofs and statements of the results in Section~\ref{sec: statements} are formalised. The library consists of three layers: Sobolev spaces, the Poincar\'e reduction, and the Sobolev embedding. 
The solvability results of Section~\ref{sec: statements} are proved in the first two layers and the regularity results are proved in the third. We also include a module graph in which every
result is placed above the results it uses. 

\subsection{Foundational layers}\label{subsec: sobolev
layer}\label{subsec: sobolev embedding layer}

\subsubsection*{Weak-derivative Sobolev layer}

The Sobolev spaces are realised as weak-derivative Hilbert spaces so that a
member is an $L^2$ function together with its $L^2$ gradients; the
ambient space is $L^2(\Omega) \times (L^2(\Omega))^n$, encoded as
$\mathtt{PiLp}\;2$ over $\mathrm{Fin}\,(n+1)$, with coordinate $0$ the
function and coordinate $i{+}1$ the $i$th weak partial derivative. The
weak-gradient relation is expressed as orthogonality to a fixed family of
constraint vectors, so that $W^{1,2}(\Omega)$ is an orthogonal complement and
is therefore closed, complete and a real Hilbert space. The space
$H_0^1(\Omega)$ is the topological closure of the test-function graphs and is
contained in $W^{1,2}(\Omega)$ by an integration by parts without boundary
term.\footnote{\leandecl{EllipticPdes.Sobolev.W12},
\leandecl{EllipticPdes.Sobolev.H01} and
\leandecl{EllipticPdes.Sobolev.testGraph_mem_W12}.}

On this layer the bilinear form $B[u,v] = \sum_{i = 1}^n \langle \partial_i u,
\partial_i v\rangle$ of the Laplacian is bounded and is coercive by the
Poincar\'e inequality obtained by density. The Lax--Milgram theorem in
Mathlib then yields a unique weak solution of the Dirichlet problem for the Poisson equation for every $f \in H^{-1}(\Omega)$.
Data $f \in L^2(\Omega)$ enters through the functional
$v \mapsto \int_\Omega f\,v_0$, which is a contraction
$L^2(\Omega) \to H^{-1}(\Omega)$.\footnote{\leandecl{EllipticPdes.l2Functional}.}
The quantitative part of the Lax--Milgram theorem gives the a priori bound of
Theorem~\ref{thm: lax milgram}.\footnote{\leandecl{EllipticPdes.lax_milgram},
\leandecl{EllipticPdes.poisson_weak_solution} and
\leandecl{EllipticPdes.norm_weak_solution_le}.} This form is the special case
$a^{ij} = \delta^{ij}$, $b = 0$, $c = 0$ in \eqref{eqn: bilinear form}. The remainder of the
library develops the theory of the full operator on this layer along the lines
of the classical divergence-form theory as in \cite{evans-2010-par-dif-equ}. The coefficient bundle consists of measurable
representatives on $\mathbb{R}^n$ that satisfy the ellipticity and entrywise
bounds almost everywhere.\footnote{\leandecl{EllipticPdes.Sobolev.EllipticCoeff}
and \leandecl{EllipticPdes.Sobolev.FullEllipticOp}.} The formalised bilinear form is
bounded with continuity constant $n^2\Lambda$.

The Poincar\'e constant is derived from the geometry of the domain. We transfer
the one-dimensional slice bound along the measure-preserving identification
with $\mathtt{EuclideanSpace}$ and thereby obtain unconditional coercivity of
the form of the Laplacian on every open coordinate box together with the
Poincar\'e inequality on a box.\footnote{\leandecl{laplaceBilin_coercive_euclBox}
and \leandecl{poincare_H01_euclBox}, in the module \texttt{Poincare/BoxSlice}.}
Since a test function on a bounded $\Omega$ and its integrals over a box can be
transferred to a box containing $\Omega$, combining these results yields the
inequality on a bounded
set.\footnote{\leandecl{EllipticPdes.Poincare.poincare_H01_of_bounded}.} Because
the compactness of the embedding is itself proved for the weak-derivative space
by means of the Fr\'echet--Kolmogorov criterion and the estimate of $L^2$
translations by the gradient, the Fredholm alternative and the spectral
decomposition hold on every bounded set with no further
hypothesis.\footnote{\leandecl{embL2_isCompact}.}

\subsubsection*{Poincar\'e reduction layer}

Coercivity of \eqref{eqn: bilinear form} follows from
Theorem~\ref{thm: poincare}, whose proof is sketched in
Section~\ref{sec: statements}. The analytic content of that proof lies in the
following two one-dimensional lemmas, from which the estimate is built up through
intermediate statements that are formalised separately and may be used
independently.

\begin{lemma}[{One-dimensional Cauchy--Schwarz inequality,
\cite[App.~B.2]{evans-2010-par-dif-equ}}]\label{lem: cauchy schwarz}
    Let $f, g$ be continuous on $[a, b]$. Then
    \begin{equation*}
        \Bigl( \int_a^b f(t)\, g(t) \, \dd t \Bigr)^2
          \leq \Bigl( \int_a^b f(t)^2 \, \dd t \Bigr)
               \Bigl( \int_a^b g(t)^2 \, \dd t \Bigr) .
    \end{equation*}
\end{lemma}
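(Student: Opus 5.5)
We will prove the inequality by the discriminant argument for a nonnegative quadratic polynomial, which uses nothing beyond linearity and monotonicity of the integral on $[a,b]$. Set
\[
    A = \int_a^b f(t)^2 \, \dd t, \qquad
    M = \int_a^b f(t)\, g(t) \, \dd t, \qquad
    G = \int_a^b g(t)^2 \, \dd t .
\]
Since $f$ and $g$ are continuous on the compact interval $[a,b]$, the functions $f^2$, $fg$ and $g^2$ are continuous and hence integrable there, so $A$, $M$ and $G$ are finite real numbers. For each $s \in \mathbb{R}$ the function $(f + s g)^2$ is nonnegative and continuous. Monotonicity of the integral (we take $a \leq b$; if $a > b$ the oriented integrals change sign simultaneously and we swap the endpoints first) therefore gives, after expanding by linearity,
\[
    0 \leq \int_a^b (f + s g)^2 \, \dd t = A + 2 s M + s^2 G
    \qquad \text{for all } s \in \mathbb{R} .
\]

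We then split into cases according to $G$, noting that $G \geq 0$ by monotonicity. If $G > 0$, we substitute $s = -M/G$ and obtain $0 \leq A - M^2/G$, that is, $M^2 \leq AG$. If $G = 0$, the inequality reads $0 \leq A + 2sM$ for every $s$. Letting $s \to \pm\infty$, or substituting $s = -(A+1)M$ directly, forces $M = 0$, and the claimed inequality reduces to $0 \leq 0$. This case split avoids having to argue that $g$ vanishes when $G = 0$, which would require a continuity argument we do not need.

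The only step requiring care is the expansion $\int (f+sg)^2 = A + 2sM + s^2 G$. It needs integrability of each of the three terms separately, so that linearity applies, and this is exactly why we first establish continuity of the products. In a formalisation this is the main bookkeeping obstacle, namely supplying \texttt{IntervalIntegrable} facts for each term. We would obtain these from continuity of the integrands on $[a,b]$, and then close each case with a nonlinear arithmetic step after the substitution.
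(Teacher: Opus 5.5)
Your proof is the discriminant argument, which is exactly how the paper says the formal proof obtains Lemma~\ref{lem: cauchy schwarz}. The expansion, the case $G>0$, the limiting argument for $G=0$ and the remark on orientation are all fine.

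One claim is false, however. When $G=0$, substituting $s=-(A+1)M$ gives only $2(A+1)M^2\le A$. Whenever $A>0$ this holds for all sufficiently small $M\neq 0$, so it does not force $M=0$. Since your formal route needs a concrete substitution rather than a limit, argue instead by contradiction: assume $M\neq 0$ and take $s=-(A+1)/(2M)$, which yields $0\le -1$. This is what Mathlib's \texttt{discrim\_le\_zero} does in its degenerate case, and that lemma handles both of your cases at once.
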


\begin{lemma}[One-dimensional Poincar\'e inequality]\label{lem: poincare 1d}
    Let $u$ be continuously differentiable on $[a, b]$ with $u(a) = 0$. Then
    \begin{equation*}
        \int_a^b u(t)^2 \, \dd t \leq \frac{(b - a)^2}{2} \int_a^b u'(t)^2 \, \dd t .
    \end{equation*}
\end{lemma}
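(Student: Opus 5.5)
The plan is to use the fundamental theorem of calculus together with Lemma~\ref{lem: cauchy schwarz}. Since $u$ is continuously differentiable on $[a,b]$ with $u(a) = 0$, for every $t \in [a,b]$ we have
\begin{equation*}
    u(t) = \int_a^t u'(s)\, \dd s .
\end{equation*}
We apply Lemma~\ref{lem: cauchy schwarz} on the interval $[a,t]$ with $f = 1$ and $g = u'$, both continuous. This gives the pointwise bound
\begin{equation*}
    u(t)^2 \leq (t - a) \int_a^t u'(s)^2 \, \dd s \leq (t - a) \int_a^b u'(s)^2 \, \dd s ,
\end{equation*}
where the second inequality enlarges the domain of integration, which is allowed because the integrand $u'^2$ is nonnegative.

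Next we integrate the pointwise bound in $t$ over $[a,b]$. The factor $\int_a^b u'(s)^2\,\dd s$ does not depend on $t$, so it comes out of the integral. What remains is $\int_a^b (t-a)\,\dd t = (b-a)^2/2$, which gives exactly the constant in the statement. Integrating the inequality is justified by monotonicity of the integral, since both sides are continuous in $t$ and hence interval integrable.

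I expect the main obstacle to be formal bookkeeping rather than mathematics. In Lean, the fundamental theorem of calculus step needs a derivative hypothesis on the closed interval, for example \texttt{intervalIntegral.integral\_eq\_sub\_of\_hasDerivAt}, which requires $u'$ to be interval integrable. Lemma~\ref{lem: cauchy schwarz} also has to be instantiated on the variable interval $[a,t]$, which requires restating continuity on subintervals. The case $b < a$ does not occur, because of the ordering implicit in $[a,b]$, but the formal statement may need the hypothesis $a \leq b$ so that the monotone enlargement of the domain is valid.
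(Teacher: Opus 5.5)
Your proposal is correct and takes the same route as the paper. The paper also writes $u(x) = \int_a^x u'$ by the fundamental theorem of calculus, bounds $u(t)^2$ by $(t-a)\int_a^b u'(s)^2\,\dd s$ using Lemma~\ref{lem: cauchy schwarz} with $f = 1$ and $g = u'$ on $[a,t]$, and integrates $t - a$ over $[a,b]$ to obtain the constant $(b-a)^2/2$.
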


Lemma~\ref{lem: cauchy schwarz} is the case $p = q = 2$ of H\"older's
inequality, which the formal proof obtains by the discriminant argument. Lemma~\ref{lem: poincare 1d} then follows from the representation
$u(x) = \int_a^x u'$ by the fundamental theorem of calculus combined with
Lemma~\ref{lem: cauchy schwarz}.\footnote{\leandecl{EllipticPdes.Poincare.intervalIntegral_mul_sq_le}
and \leandecl{EllipticPdes.Poincare.poincare_oneDim}.} We state the
Cauchy--Schwarz inequality as a separate lemma because the continuity bound on
the bilinear form also uses it. Both lemmas are also contained, in a form
prepared for submission upstream, in the first-named author's Mathlib fork alongside
the Gagliardo--Nirenberg--Sobolev inequality. Whereas the sharp Friedrichs
constant would require eigenvalue theory, the one-dimensional argument yields
the explicit constant $(b - a)^2/2$ that the reduction to boxes preserves.

The formalisation devotes one module to each step of the proof of
Theorem~\ref{thm: poincare}, from Lemma~\ref{lem: cauchy schwarz} to the
inequality on a bounded set. The reduction is carried out on a box because Fubini's theorem
and the product measure are already available for boxes in Mathlib. This choice
entails no loss of generality, since enclosing a bounded set in a box transfers
the constant to that set.

\subsubsection*{Sobolev embedding layer}

The Sobolev embedding layer, consisting of three levels, provides the passage from equivalence classes of functions (differing on sets of measure zero) to functions defined everywhere pointwise.

The lowest level consists of a single step. The Gagliardo--Nirenberg--Sobolev
inequality of Mathlib is stated for compactly
supported functions on the whole space. We extend it to weak-derivative classes
on a ball contained in a larger ball and thereby obtain the local form of
Theorem~\ref{thm: sobolev} that is used in the cutoff argument, in which the
cutoff is identically one on the smaller ball.\footnote{The module
\texttt{Embedding/GagliardoNirenberg}.} Morrey's inequality is extended in the
same way and yields a H\"older continuous representative whenever the exponent
exceeds the dimension.\footnote{The modules \texttt{Embedding/Morrey},
\texttt{Embedding/MorreyOneDim} and \texttt{Embedding/RayIntegral}.} 
The radii in the bootstrap decrease only within the
interval between its two given radii. Since the derivative of a representative
is the representative of the derivative, the differentiability argument is
carried out on the inner ball itself.

The intermediate level is the bootstrap over a family closed under weak
differentiation, whose design and step condition are described in the proof of
Theorem~\ref{thm: sobolev}. The bootstrap with the full step $1/n$ applies when
a depth function bounds the supply of derivatives and that with the half step
$1/(2n)$ when derivatives of every order are available.\footnote{The modules \texttt{Embedding/SobolevLadderFullStep} and
\texttt{Embedding/SobolevLadder}.} The exponent-lowering form of the step is the
formal counterpart of the passage through $W^{k-\ell,s}$ for $s \in [1,n)$, a
step that the classical proof treats separately and in which it uses the
boundedness of
$\Omega$.\footnote{\leandecl{EllipticPdes.Embedding.exists_eLpNorm_sobolevConj_le_of_le}.} 

At the highest level weak derivatives are converted into classical ones. A weak
gradient is unique almost everywhere and, if it has a continuous representative,
this representative is a classical Fr\'echet derivative, as we prove by
mollification and uniform convergence on a compact
enlargement.\footnote{The modules \texttt{Embedding/WeakGradUnique} and
\texttt{Embedding/ClassicalDeriv}.} Combining these two results, we show that
the members of a closed family are smooth, as the statement of Theorem~\ref{thm: interior smooth} requires, or, at finite depth,
H\"older regular, as Theorem~\ref{thm: sobolev} requires.\footnote{The modules \texttt{Embedding/SmoothOfGradClosed},
\texttt{Embedding/HolderOfGradClosed} and
\texttt{Regularity/InteriorHolderFinite}.} The closed family is assembled from
the interior estimates.\footnote{The module \texttt{Regularity/IteratedFamily}.}
No separate uniqueness argument is required there, since a single iterated
weak-derivative hypothesis already assigns a class to every index list and the
successor is therefore the closure.

The library also contains a shorter argument that provides an alternative to
this layer in low dimension. Since $H^2$ data alone provides only two derivatives,
the argument consists of a single step that improves the integrability of the
gradient from $L^2$ to $L^6$, after which Morrey's inequality applies for $n \leq 3$.\footnote{The module
\texttt{Embedding/InteriorHolder}.}

\subsection{Module architecture}\label{subsec: architecture}

The library is organised so that the logical dependencies between results are
reflected in the module graph and every import points strictly upward.
Consequently, each result can be reused without the results above it.
Figure~\ref{fig: modules} shows the modules and the imports between them, with
the Sobolev and Poincar\'e layers in its upper part and the modules of solvability
and regularity built on them in its lower part. It is extracted from the import headers of the
modules.

The Poincar\'e reduction is formalised with one module for each step of the
proof of Theorem~\ref{thm: poincare}: the interval Cauchy--Schwarz and one-dimensional
inequalities of Lemmas~\ref{lem: cauchy schwarz} and~\ref{lem: poincare 1d}
(\texttt{Poincare/OneDim}), the per-direction bound on a box (\texttt{Fubini}),
the average over the coordinate directions (\texttt{Domain}), the extension from
test functions to $H_0^1(\Omega)$ (\texttt{Density}), the bridge from a slice
bound to the graph-coordinate hypothesis (\texttt{Geometry}), the transport to
Euclidean space and the statement on a box (\texttt{BoxSlice}), and the enclosure
of a bounded set in a box (\texttt{BoundedDomain}). These modules depend on the
weak-derivative Hilbert spaces and the action of the coefficients
(\texttt{Sobolev/Basic}, \texttt{Sobolev/Coefficients}). The existence and
spectral theory, including its unconditional forms on bounded domains, is
developed in the modules above them.

The regularity theory begins with difference quotients along with their control by the
translation estimate, and by a bound uniform in the step
(\texttt{Regularity/DifferenceQuotient}, \texttt{DiffQuotientBound}). These
estimates are localised by the Caccioppoli inequality
(\texttt{Caccioppoli}). The test functions required in the interior argument
are constructed from the cutoff tower and the calculus of $C^1$ coefficients
(\texttt{CutoffTower}, \texttt{CoeffC1}). This part of the library ends with the
interior $H^2$ estimate of Theorem~\ref{thm: interior h2}
(\texttt{Regularity/Interior}). On the basis of this estimate the Sobolev
bootstrap improves the integrability of the gradient to an exponent at which
Morrey's inequality applies and yields interior H\"older continuity with
exponent $1/2$ (\texttt{Embedding/GagliardoNirenberg}, \texttt{Morrey},
\texttt{InteriorHolder}). Since the bootstrap uses the ray-integral estimates on
which the proof of Morrey's inequality is based, the bootstrap is placed above
Morrey's inequality in the import graph.

The regularity theory is extended beyond the $H^2$ estimate to every order by
induction. In the differentiated weak formulation every derivative of the
coefficients is taken from a $W^{k,\infty}$ bundle
(\texttt{Regularity/DifferentiatedWkInfty}, \texttt{LeibnizWkInfty},
\texttt{CoeffWkInfty}). The induction, for which the cutoff calculus provides
the datum at each step (\texttt{CutoffDeriv}, \texttt{CutoffDatum},
\texttt{DatumPiece}), concludes with Theorem~\ref{thm: higher interior}
(\texttt{HigherInterior}). The output of this theorem, a family of weak
derivatives of every order up to $m + 2$, is assembled in
\texttt{Regularity/IteratedFamily} into the closed family to which the
embedding layer of Section~\ref{subsec: sobolev embedding layer} applies. This
closed family is used in two modules, namely \texttt{Regularity/InteriorSmooth}
at every order and \texttt{Regularity/InteriorHolderFinite} at finite depth.

Two directories are not used in the proofs of the results above. Campanato's
characterisation of H\"older continuity by the decay of the mean oscillation is
proved in both directions (\texttt{Campanato}). No module outside the directory
uses it, since in the library H\"older continuity is derived from Morrey's
inequality. Half-ball geometry, tangential difference quotients, the
admissibility of a cutoff tested against it and the division of a $C^1$ weight are steps of the boundary $H^2$ estimate in \cite[\S 6.3.2]{evans-2010-par-dif-equ} and are proved
(\texttt{Regularity/Boundary}). No boundary regularity estimate has yet been derived from
these steps.

The library distinguishes an elementary core suitable for inclusion in Mathlib,
namely the Cauchy--Schwarz inequality on an interval and the one-dimensional
Poincar\'e inequality added to the first-named author's fork of Mathlib, from the
remaining development built on existing Mathlib results. The division between
reused and new infrastructure was determined mechanically by a query over the
combined declaration graph of both Lean environments. The query locates each
premise together with its home module and therefore allows the division to be
rechecked against the libraries as they evolve.

\begin{figure}[p]
  \centering
  \scalebox{0.97}{\begin{tikzpicture}[
      box/.style={draw, rounded corners, font=\scriptsize, text width=2.45cm,
        align=center, minimum height=0.55cm, inner sep=2pt},
      gbox/.style={box, fill=gray!15},
      dbox/.style={box, dashed},
      arr/.style={-{Stealth[length=4pt]}, semithick},
    ]
    \begin{scope}
    \node[box] (pineq)  at (0,0)
    {\texttt{Analysis/}\\\texttt{Poincare}\\\texttt{Inequality}};
    \node[box] (sobB)   at (8.7,0)      {\texttt{Sobolev/Basic}};

    \node[box] (lptr)   at (0,-1.45)    {\texttt{Analysis/}\\\texttt{LpTranslation}};
    \node[box] (onedim) at (2.9,-1.45)  {\texttt{Poincare/}\\\texttt{OneDim}};
    \node[box] (sobC)   at (5.8,-1.45)  {\texttt{Sobolev/}\\\texttt{Coefficients}};
    \node[box] (wgrad)  at (11.6,-1.45) {\texttt{Embedding/}\\\texttt{WeakGradient}};
    \node[box] (conv)   at (14.5,-1.45) {\texttt{Embedding/}\\\texttt{Convolution}};

    \node[box] (fk)     at (0,-2.9)     {\texttt{Analysis/}\\\texttt{Frechet}\\\texttt{Kolmogorov}};
    \node[box] (fub)    at (2.9,-2.9)   {\texttt{Poincare/}\\\texttt{Fubini}};
    \node[box] (m1d)    at (8.7,-2.9)   {\texttt{Embedding/}\\\texttt{MorreyOneDim}};
    \node[box] (ray)    at (11.6,-2.9)  {\texttt{Embedding/}\\\texttt{RayIntegral}};

    \node[box] (dom)    at (2.9,-4.35)  {\texttt{Poincare/}\\\texttt{Domain}};
    \node[box] (morrey) at (11.6,-4.35) {\texttt{Embedding/}\\\texttt{Morrey}};

    \node[box] (dens)   at (0,-5.8)     {\texttt{Poincare/}\\\texttt{Density}};
    \node[box] (geom)   at (2.9,-5.8)   {\texttt{Poincare/}\\\texttt{Geometry}};

    \node[box] (boxs)   at (2.9,-7.25)  {\texttt{Poincare/}\\\texttt{BoxSlice}};
    \node[box] (bdd)    at (5.8,-7.25)  {\texttt{Poincare/}\\\texttt{BoundedDomain}};

    \draw[arr] (pineq) -- (lptr);
    \draw[arr] (lptr) -- (fk);
    \draw[arr] (onedim) -- (fub);
    \draw[arr] (fub) -- (dom);
    \draw[arr] (dom) -- (geom);
    \draw[arr] (geom) -- (boxs);
    \draw[arr] (boxs) -- (bdd);
    \draw[arr] (wgrad) -- (ray);
    \draw[arr] (ray) -- (morrey);
    \draw[arr, rounded corners=4pt] (conv.south) -- (14.5,-4.35) -- (morrey.east);
    \draw[arr, rounded corners=4pt] (sobB.west) -- (1.45,0) -- (1.45,-5.8) -- (dens.east);
    \end{scope}
    \begin{scope}[yshift=-7.1cm]

    \node[box] (bil)   at (0,-1.45)     {\texttt{Form/}\\\texttt{BilinearForm}};
    \node[box] (gen)   at (2.9,-1.45)   {\texttt{Form/}\\\texttt{GeneralForm}};
    \node[box] (dq)    at (11.6,-1.45)
    {\texttt{Regularity/}\\\texttt{Difference}\\\texttt{Quotient +}};

    \node[box] (exi)   at (0,-2.9)      {\texttt{Existence/}\\\texttt{Existence}};
    \node[box] (gar)   at (2.9,-2.9)    {\texttt{Existence/}\\\texttt{Garding}};
    \node[box] (coe)   at (8.7,-2.9)    {\texttt{Regularity/}\\\texttt{CoeffC1,}\\\texttt{CoeffC2}};
    \node[box] (cut)   at (14.5,-2.9)   {\texttt{Regularity/}\\\texttt{CutoffTower}};

    \node[box] (hneg)  at (0,-4.35)     {\texttt{Form/Hneg}};
    \node[box] (fre)   at (2.9,-4.35)   {\texttt{Fredholm/}\\\texttt{Fredholm}};
    \node[box] (cac)   at (11.6,-4.35)  {\texttt{Regularity/}\\\texttt{Caccioppoli}};

    \node[box] (cmp)   at (0,-5.8)      {\texttt{Fredholm/}\\\texttt{Compactness}};
    \node[box] (fcm)   at (5.8,-5.8)    {\texttt{Fredholm/}\\\texttt{FredholmComplete}};
    \node[box] (rdq)   at (11.6,-5.8)
    {\texttt{Regularity/}\\\texttt{Restricted}\\\texttt{DiffQuotient +}};

    \node[box]  (spe)  at (0,-7.25)     {\texttt{Spectrum/}\\\texttt{Spectrum}};
    \node[box]  (sig)  at (5.8,-7.25)   {\texttt{Spectrum/}\\\texttt{SpectrumSigma}};
    \node[dbox, text width=1.9cm] (bnd) at (8.7,-7.25)
      {\texttt{Regularity/}\\\texttt{Boundary/*}};
    \node[box]  (eb)   at (11.6,-7.25)
    {\texttt{Regularity/}\\\texttt{Interior/}\\\texttt{EnergyBound +}};

    \node[box] (rel)   at (2.9,-8.7)    {\texttt{Spectrum/}\\\texttt{RellichDischarge}};
    \node[box] (nb)    at (11.6,-8.7)
    {\texttt{Regularity/}\\\texttt{Interior/}\\\texttt{NormBound}};

    \node[box]  (bi)   at (5.8,-10.15)  {\texttt{BoundedInstances}};
    \node[box]  (int)  at (11.6,-10.15) {\texttt{Regularity/}\\\texttt{Interior}};

    \node[dbox] (camp) at (8.7,-11.6)   {\texttt{Campanato/*}};
    \node[dbox] (ext)  at (5.8,-11.6)   {\texttt{Extension/*}};
    \node[box]  (ih)   at (11.6,-11.6)
    {\texttt{Embedding/}\\\texttt{WeakDerivBridge,}\\\texttt{InteriorHolder}};
    \node[box]  (gns)  at (14.5,-11.6)
    {\texttt{Embedding/}\\\texttt{Gagliardo}\\\texttt{Nirenberg}};

    \draw[arr] (dens) -- (bil);
    \draw[arr] (bil) -- (exi);
    \draw[arr] (gen) -- (gar);
    \draw[arr] (gar) -- (fre);
    \draw[arr] (cmp) -- (spe);
    \draw[arr] (fcm) -- (sig);
    \draw[arr] (sig) -- (bi);
    \draw[arr, rounded corners=4pt] (bil.west) -- (-1.55,-1.45) -- (-1.55,-4.35) -- (hneg.west);
    \draw[arr, rounded corners=4pt] (bil.west) -- (-1.55,-1.45) -- (-1.55,-7.25) -- (spe.west);
    \draw[arr, rounded corners=4pt] (cmp.east) -- (1.45,-5.8) -- (1.45,-10.15) -- (bi.west);

    \draw[arr] (dq) -- (cac);
    \draw[arr, rounded corners=4pt] (gar.east) -- (4.35,-2.9) -- (4.35,-3.65)
      -- (10.0,-3.65) -- (10.0,-4.35) -- (cac.west);
    \draw[arr] (cac) -- (rdq);
    \draw[arr] (rdq) -- (eb);
    \draw[arr, rounded corners=4pt] ([xshift=3mm]coe.south) -- (9.0,-6.5) -- (10.0,-6.5)
      -- (10.0,-7.25) -- (eb.west);
    \draw[arr, rounded corners=4pt] (cut.east) -- (16.05,-2.9) -- (16.05,-7.25) -- (eb.east);
    \draw[arr, rounded corners=4pt] (cut.east) -- (16.05,-2.9) -- (16.05,-11.6) -- (gns.east);
    \draw[arr] (eb) -- (nb);
    \draw[arr] (nb) -- (int);
    \draw[arr] (int) -- (ih);
    \draw[arr] (gns) -- (ih);
    \end{scope}
    \begin{scope}[arr, rounded corners=4pt]
    \draw ([yshift=-3mm]pineq.east) -- (2.9,-0.3) -- (onedim.north);
    \draw ([xshift=-6mm]sobB.south) -- (8.1,-0.725) -- (5.8,-0.725) -- (sobC.north);
    \draw ([xshift=6mm]sobB.south) -- (9.3,-0.725) -- (11.6,-0.725) -- (wgrad.north);
    \draw (wgrad.west) -- (8.7,-1.45) -- (m1d.north);
    \draw (sobB.west) -- (4.35,0) -- (4.35,-5.8) -- (geom.east);
    \draw (dom.west) -- (0,-4.35) -- (dens.north);
    \draw (sobB.east) -- (15.95,0) -- (15.95,-9.275) -- (14.8,-9.275) -- ([xshift=3mm]cut.north);
    \draw (fk.south) -- (0,-3.625) -- (10.15,-3.625) -- (10.15,-8.35) -- ([yshift=2mm]dq.west);
    \draw (fk.west) -- (-1.8,-2.9) -- (-1.8,-15.075) -- (2.6,-15.075) -- ([xshift=-3mm]rel.north);
    \draw (sobC.east) -- (7.25,-1.45) -- (7.25,-10.0) -- (coe.west);
    \draw (sobC.east) -- (7.25,-1.45) -- (7.25,-7.9) -- (3.2,-7.9) -- ([xshift=3mm]gen.north);
    \draw ([xshift=3mm]dens.south) -- (0.3,-7.9) -- (2.6,-7.9) -- ([xshift=-3mm]gen.north);
    \draw (bil.east) -- (1.45,-8.55) -- (1.45,-6.525) -- (2.6,-6.525) -- ([xshift=-3mm]geom.south);
    \draw ([yshift=-2mm]dq.west) -- (8.7,-8.75) -- (coe.north);
    \draw (dq.east) -- (14.2,-8.55) -- ([xshift=-3mm]cut.north);
    \draw (morrey.south) -- (11.6,-5.075) -- (13.05,-5.075) -- (13.05,-17.975) -- (14.5,-17.975) -- (gns.north);
    \draw ([xshift=-3mm]fre.south) -- (2.6,-12.175) -- (0,-12.175) -- (cmp.north);
    \draw (fre.east) -- (5.8,-11.45) -- (fcm.north);
    \draw (spe.east) -- (2.9,-14.35) -- (rel.north);
    \draw (rel.east) -- (5.5,-15.8) -- ([xshift=-3mm]bi.north);
    \draw (rdq.west) -- (8.4,-12.9) -- ([xshift=-3mm]bnd.north);
    \end{scope}
  \end{tikzpicture}}
  \caption{Modules of the library and the imports between them. Each arrow
  represents an \texttt{import} and module paths are given relative to
  \texttt{EllipticPdes/} with long module names broken across lines. Its upper
  part is the weak-derivative Sobolev layer and the Poincar\'e reduction, which are
  independent of the elliptic operator, and its lower part the modules of the
  solvability and regularity theory built on them. A starred path denotes a whole
  directory and a box marked \texttt{+} includes the continuation modules of the
  module that it names. Dashed boxes denote directories that no module in the tree
  imports. Two \texttt{Analysis} modules of auxiliary $L^2$ lemmas,
  \texttt{LpExtendByZero} and \texttt{Euclidean\allowbreak FunctionalNorm}, are
  omitted, as is \texttt{Analysis/\allowbreak LpTranslation\allowbreak Continuity},
  which belongs to the preparatory results for the extension modules.}
  \label{fig: modules}
\end{figure}
\section{Discussion}\label{sec: discussion}\label{subsec: discharge}

Every result claimed in Section~\ref{sec: statements} is proved in Lean and
checked by the kernel. We note four further features of the development.
First, the Poincar\'e inequality is derived from the one-dimensional estimate
alone. Second, the compactness of the embedding is proved by means of the
Fr\'echet--Kolmogorov criterion, so that the Fredholm alternative, the
description of the spectrum, and the resolvent bound require no assumption on
$\Omega$ beyond boundedness. Third, the declaration on which Theorem~\ref{thm: higher interior} rests
takes $a^{ij} \in W^{m+1,\infty}(\Omega)$ and $b^i, c \in W^{m,\infty}(\Omega)$, which is
weaker than the hypothesis $C^{m+1}(\Omega)$ of the classical statement.
Fourth, the variational construction of the eigenvalues is independent of the
spectral theorem for the solution operator and also applies to a semilinear
equation.

Every declaration is proved with no \texttt{sorry} and depends only on the axioms
\texttt{propext}, \texttt{Classical.\allowbreak choice} and \texttt{Quot.sound}. The
one-dimensional Poincar\'e inequality and the Cauchy--Schwarz inequality on an
interval from which it is derived have been added to a fork of Mathlib in a form
suitable for submission upstream.

We measure the extent of a formalisation by its discharge ratio, defined as the
fraction of claimed results that are established by a named Lean declaration in
place of a warrant or an appeal to routine. By a claimed result we mean a
theorem, lemma, corollary or proposition of Section~\ref{sec: statements},
whether it is stated there as a numbered result or in the prose with its
declaration given in a footnote. Sixteen of these results are numbered there, and
the two lemmas on which Theorem~\ref{thm: poincare} rests are stated in
Section~\ref{sec: library}. Since every claimed result
is discharged by a named Lean declaration and no leaf step among them is
warranted, routine or open, the discharge ratio of the present development
is $1$.

\subsection{Conduct of the formalisation}\label{subsec: conduct}

Within the workflow and the acceptance checks described in
Subsection~\ref{subsec: framework}, the authors made three decisions that the
agents did not reach independently, namely the proof of the
Poincar\'e inequality on a box (Theorem~\ref{thm: poincare}) in place of the
representation formula on a ball, the bootstrap in half-steps of $1/(2n)$ in
place of the full step (Theorem~\ref{thm: interior smooth} and the iterations on which
it depends), and the use of a family closed under weak differentiation in place
of an induction on the order (Theorem~\ref{thm: higher interior} and the
bootstrap that uses it).

The four statuses of Subsection~\ref{subsec: framework} are defined with
reference to the one question that the kernel cannot decide, namely whether a
formal statement is the intended one. A discharged obligation corresponds to a
Lean declaration whose type a human reader has compared with the prose
statement. Whereas a warranted obligation is justified by a located citation and
a routine obligation by shared competence, an open obligation has no
justification at all. The kernel verifies only the first component of a
discharge, since a statement that is vacuous, that quantifies in the wrong order
or that permits a constant to depend on the data still elaborates with only the
three standard axioms.

A defect of this kind arose in the present development and was detected in
review before the manuscript was written. Every estimate in the regularity
theory had quantified the solution and the datum before the constant, so that
the type permitted the constant to depend on both and the inequality imposed no
constraint. We corrected the statements so that the constant is quantified
first, in the order now stated in Section~\ref{sec: statements}, and
specified in each statement the quantities on which the constant depends. Since
a successful build is compatible with either order, the discharge ratio stated
above depends on a comparison, made by a human reader, of each type with its
prose statement.

\subsection{Relation to prior work}\label{subsec: audit}

Every general ingredient used in the assembly is available in Mathlib, namely
the Lax--Milgram theorem in the form of the coercive-form equivalence
\texttt{IsCoercive.\allowbreak continuous\allowbreak Linear\allowbreak Equiv\allowbreak OfBilin}
together with its uniqueness counterpart, the Fr\'echet--Riesz representation
theorem, the theory of product measures and Fubini's theorem, and the spectral
theory of compact operators including the Fredholm alternative. The bootstrap
uses the Gagliardo--Nirenberg--Sobolev inequality for compactly supported maps
in the form
\texttt{MeasureTheory.\allowbreak
eLpNorm\allowbreak\_le\allowbreak\_eLpNorm\allowbreak\_fderiv\allowbreak\_of\allowbreak\_eq}
formalised in Mathlib in \cite{vandoorn-2024-int-wit-int}. The extension of this inequality from compactly supported $C^1$ maps to the
Sobolev classes on a domain defined through weak derivatives is carried out in
the present work.

The Sobolev spaces themselves are also constructed in the present work. The
Sobolev spaces available in Mathlib are the Bessel potential spaces in
\cite{doll-2025-for-sch-fun}, defined on the whole space through the Fourier
transform. They therefore provide no predicate of weak differentiability on a
domain of the kind required by the weak formulation of an equation in divergence
form. The De Giorgi--Nash--Moser development in
\cite{armstrong-2026-for-de-gio} contains such a theory, together with a
structure for uniformly elliptic coefficients and a Poincar\'e inequality on
balls proved by integrating the fundamental theorem of calculus along rays. Our
library was built independently of that development. The boundary between the
two was determined by a query over the combined declaration graph of both Lean
environments that locates each premise together with the module in which it is
declared.

The disclosure in Subsection~\ref{subsec: framework} agrees with that given by \cite{chow-2026-lea-for-ham} for a development of comparable
size. Their library is also the closest point of comparison in a second respect,
since it is a differential-geometric development on Mathlib that contains the
divergence theorem on a compact manifold with boundary, Green's identities, a
surface measure and an outward normal. It therefore provides the closest formal
counterpart to the integration by parts with a boundary term that the boundary
theory of the present work would require. Subsection~\ref{subsec: directions} returns to
this comparison.

\subsection{Further directions}\label{subsec: directions}

The most immediate direction concerns the interior theory itself.
Theorem~\ref{thm: interior h2} and Corollary~\ref{cor: classical solvability}
take $a^{ij} \in C^1(\Omega)$, although the declarations on which they rest
require only a Lipschitz bound on the principal coefficients. Removing this
hypothesis requires the identification of $W^{1,\infty}$ functions with their Lipschitz
representative, which Mathlib does not yet contain.

Beyond the interior theory, the parts of \cite[Chapter~6]{evans-2010-par-dif-equ}
and \cite[Chapters~6 and~8]{gilbarg-2001-ell-par-dif} that remain to be formalised
are the boundary theory, Harnack's inequality and the Schauder theory, which
require differing amounts of further work. Of these, boundary $H^2$ regularity in
the sense of \cite[\S 6.3.2, Theorem~4]{evans-2010-par-dif-equ} is the closest to
completion, since the geometry of the half-ball, the tangential difference
quotients together with the cutoff that ensures their admissibility
(\leandecl{cutoffMulOn_tangDiffQuotG_mem_H01}) and the rule for dividing a $C^1$
weight out of a weak derivative (\leandecl{HasWeakDerivOn.of_mul_contDiff_left})
are already formalised. It remains to prove the energy and norm bounds on the half-ball
and to rewrite the equation in nondivergence form so that the tangential estimate
yields a bound on the normal second derivative. This rewriting has no analogue
in the interior argument and constitutes the principal difficulty of the
boundary case.

The global theory requires an integration by parts with a boundary term against
surface measure on $\partial\Omega$, a tool that the interior theory never uses. No
such result is available in Mathlib. The closest formal counterpart is the
differential-geometric library in
\cite{chow-2026-lea-for-ham}, in which the divergence theorem on a compact
manifold with boundary is proved (\leandecl{stokes_compact_via_pou}). That library also proves Green's first
and second identities in the same form, providing a surface measure and
outward normal on the boundary. The identification of the face sum with
$\int_{\partial M} \langle \nu, X \rangle \, \dd \sigma$ against the induced
surface measure enters as a hypothesis of the theorems that state it
(its \leandecl{boundaryFaceSum_eq_surface_integral_of_chartIdentification} and
\leandecl{green_first_eq_boundary_surface_integral}), so that this
identification remains to be proved there. Since that library is directed at
closed manifolds, its main theorem is stated for a model without boundary and
the results on the boundary are developed separately from it. The boundary
$H^2$ estimate on the flat half-ball requires no such theorem, because the test
functions vanish where the boundary term would appear. Its difficulty therefore
lies in the rewriting in nondivergence form described above. The trace theory
needed for a global estimate, by contrast, requires this identification as the
first result to be established.

For the Schauder theory the library contains only Campanato's characterisation
of H\"older continuity and its converse, both of which are proved but neither of
which is yet used. The library contains the maximum principles of \cite[\S 6.4]{evans-2010-par-dif-equ}, namely Theorems~1 and~2 of \S 6.4.1 for a $C^2$
subsolution and both clauses of Theorem~3 of \S 6.4.2, together with the Hopf
boundary point lemma and the weak maximum principle for a weak
subsolution.\footnote{\leandecl{EllipticPdes.Classical.weak_maximum_principle},
\leandecl{EllipticPdes.Classical.weak_maximum_principle_of_nonneg},
\leandecl{EllipticPdes.Classical.strong_maximum_principle},
\leandecl{EllipticPdes.Classical.hopf_lemma} and
\leandecl{EllipticPdes.Sobolev.weak_maximum_principle}. The last of these is
stated for a weak subsolution in $H^1(\Omega)$ and follows
\cite[Theorem~8.1]{gilbarg-2001-ell-par-dif}.}
Harnack's inequality is the furthest from the present library, since our library
contains no form of Moser iteration.

A separate direction concerns the regularity of the coefficients. For equations
in divergence form with bounded measurable coefficients, the interior De
Giorgi--Nash--Moser theory has already been formalised in
\cite{armstrong-2026-for-de-gio}. At the other end of the scale, the Schauder and
Calder\'on--Zygmund theories for smoother coefficients have as yet no formal
counterpart.

A further direction is the completion of the eigenvalue theory. In the present
library the eigenvalues are defined through Rayleigh quotients
(Theorem~\ref{thm: principal eigenvalue} and the sequence constructed from it in
Subsection~\ref{subsec: variational}). Some results that remain to be proved include
the completeness of the variational family in $L^2(\Omega)$, the divergence of
the eigenvalue sequence, and two clauses concerning the principal eigenvalue
that depend on a maximum principle, that is, the positivity of the first
eigenfunction in $\Omega$ and the simplicity of $\lambda_1$. Neither clause is
proved, because the library states the strong maximum principle only for a $C^2$
subsolution and establishes only the weak maximum principle for a weak
subsolution.

\appendix
\begingroup\sloppy

\section{Supporting definitions}\label{sec: appendix transcription}

\label{subsec: appendix definitions}

We collect here the supporting definitions used in the statements of Section~\ref{sec: statements}, each given in mathematical prose beside its Lean definition.

\paragraph{\leandecl{lpExtendByZero}}
\textbf{Extension by zero} as a linear isometry \leaninline{\detokenize{Lp ℝ p (μ.restrict s) →ₗᵢ[ℝ] Lp ℝ p μ}}. A class on the restricted measure is sent to the \leaninline{\detokenize{Lp}} class of its extension by zero, with the \leaninline{\detokenize{Lp}} norm preserved.

\paragraph{\leandecl{gradCLM}}
The continuous linear functional whose coordinate values are the entries of \leaninline{\detokenize{g}} at \leaninline{\detokenize{y}}. This is the form that \leaninline{\detokenize{HasFDerivAt}} requires, assembled from the form in which a weak gradient is given.

\begin{quote}
\begin{LeanCode}
def gradCLM (g : Fin d → EuclideanSpace ℝ (Fin d) → ℝ) (y : EuclideanSpace ℝ (Fin d)) : EuclideanSpace ℝ (Fin d) →L[ℝ] ℝ := ∑ k, g k y • (EuclideanSpace.proj k : EuclideanSpace ℝ (Fin d) →L[ℝ] ℝ)
\end{LeanCode}
\end{quote}

\paragraph{\leandecl{HasWeakGradOn}}
\leaninline{\detokenize{g}} is the pointwise weak gradient of \leaninline{\detokenize{u}} on \leaninline{\detokenize{B}}: integration by parts holds against every smooth compactly supported test function whose support lies in \leaninline{\detokenize{B}}. This mirrors \leaninline{\detokenize{EllipticPdes.Regularity.HasWeakDerivOn}} component-wise but for pointwise functions $u, g_k : \mathbb{R}^{d} \to \mathbb{R}$ (\leaninline{\detokenize{u, gₖ : EuclideanSpace ℝ (Fin d) → ℝ}}).

\paragraph{\leandecl{HasC1Boundary}}
\textbf{$\Omega$ (\leaninline{\detokenize{Ω}}) has $C^1$ (\leaninline{\detokenize{C¹}}) boundary.} Every boundary point admits a chart, which is the hypothesis of Guo's Theorem III.2.2 and of Evans's §5.4 Theorem 1.

\begin{quote}
\begin{LeanCode}
def HasC1Boundary (Ω : Set (EuclideanSpace ℝ (Fin d))) : Prop := ∀ x ∈ frontier Ω, ∃ c : C1Chart d, c.Fits Ω x
\end{LeanCode}
\end{quote}

\paragraph{\leandecl{laplaceBilin}}
The bilinear form of the Laplacian on $H_0^1(\Omega)$ (\leaninline{\detokenize{H₀¹(Ω)}}) as a bounded (continuous) bilinear form, with operator-norm bound \leaninline{\detokenize{d}}.

\paragraph{\leandecl{embL2}}
The coordinate-\leaninline{\detokenize{0}} embedding $H_0^1(\Omega) \hookrightarrow L^2(\Omega)$ (\leaninline{\detokenize{H₀¹(Ω) ↪ L²(Ω)}}), $U \mapsto U 0$ (\leaninline{\detokenize{U ↦ U 0}}), as a continuous linear map: the \leaninline{\detokenize{PiLp}} projection onto coordinate \leaninline{\detokenize{0}} precomposed with the submodule inclusion.

\begin{quote}
\begin{LeanCode}[commandchars=\\\{\}]
def embL2 (Ω : Set (EuclideanSpace ℝ (Fin d))) : H01 Ω →L[ℝ] L2D Ω := (PiLp.proj (\symbol{"1D55C} := ℝ) 2 (fun _ : Fin (d + 1) => L2D Ω) (0 : Fin (d + 1))).comp (H01 Ω).subtypeL
\end{LeanCode}
\end{quote}

\paragraph{\leandecl{IteratedL2Bound}}
\textbf{Uniform $L^2$ (\leaninline{\detokenize{L²}}) bound on an iterated family.} Every derivative up to order \leaninline{\detokenize{k}} is bounded by \leaninline{\detokenize{C}} in $L^2(V)$ (\leaninline{\detokenize{L²(V)}}). Kept apart from \leaninline{\detokenize{HasIteratedWeakDerivOn}} so that existence and estimate can be proved and used separately, matching the shape of \leaninline{\detokenize{interior_H2_estimate}}, which returns the derivative and its bound as separate conjuncts.

\begin{quote}
\begin{LeanCode}
def IteratedL2Bound {u : L2D V} (hu : HasIteratedWeakDerivOn V k u) (C : ℝ) : Prop := ∀ α : List (Fin d), α.length ≤ k → ‖hu.D α‖ ≤ C
\end{LeanCode}
\end{quote}

\paragraph{\leandecl{extendL2}}
\textbf{Extension by zero}, $L^2(\Omega) \hookrightarrow L^2(\mathbb{R}^{d})$ (\leaninline{\detokenize{L2D Ω →ₗᵢ[ℝ] EucL2 d}}). A class on the restricted measure \leaninline{\detokenize{volume.restrict Ω}} is sent to the whole-space $L^2(\mathbb{R}^d)$ (\leaninline{\detokenize{L²(ℝ^d)}}) class of its extension by zero, with the $L^2$ (\leaninline{\detokenize{L²}}) norm preserved. This extension lets the whole-space difference quotients act on gradient data given on a restricted domain \cite[\S 6.3.1]{evans-2010-par-dif-equ}.

\begin{quote}
\begin{LeanCode}
def extendL2 {Ω : Set (EuclideanSpace ℝ (Fin d))} (hΩm : MeasurableSet Ω) : L2D Ω →ₗᵢ[ℝ] EucL2 d := lpExtendByZero volume 2 Ω hΩm
\end{LeanCode}
\end{quote}

\paragraph{\leandecl{iteratedNorm}}
\textbf{\leaninline{\detokenize{H^k(V)}} norm of an iterated family}, summed over lists of directions of length at most \leaninline{\detokenize{k}}.

\begin{quote}
\begin{LeanCode}
def iteratedNorm (H : HasIteratedWeakDerivOn V k u) : ℝ := Real.sqrt (∑ m ∈ Finset.range (k + 1), ∑ α : Fin m → Fin d, ‖H.D (List.ofFn α)‖ ^ 2)
\end{LeanCode}
\end{quote}

\paragraph{\leandecl{LocalWeakSol}}
\textbf{Local weak solution on \leaninline{\detokenize{W}}, on plain function representatives.} \leaninline{\detokenize{u}} with gradient \leaninline{\detokenize{G}}, tested against smooth functions compactly supported in \leaninline{\detokenize{W}}. This is the plain-integral form in which Evans §6.3.1 states the equation, as opposed to the \leaninline{\detokenize{H01}}-and-\leaninline{\detokenize{fullBilin}} formulation that \leaninline{\detokenize{interior_smooth}} takes; \leaninline{\detokenize{isLocalWeakSolution_iff_localWeakSol}} connects it to the predicate \leaninline{\detokenize{IsLocalWeakSolution}} on the ambient space.

\paragraph{\leandecl{partialD}}
The \leaninline{\detokenize{i}}-th classical partial derivative of $\varphi$ (\leaninline{\detokenize{φ}}) (a directional \leaninline{\detokenize{fderiv}}).

\begin{quote}
\begin{LeanCode}
def partialD (i : Fin d) (φ : EuclideanSpace ℝ (Fin d) → ℝ) : EuclideanSpace ℝ (Fin d) → ℝ := fun x => (fderiv ℝ φ x) (EuclideanSpace.single i 1)
\end{LeanCode}
\end{quote}

\paragraph{\leandecl{IsTestFn}}
A smooth, compactly supported test function whose support lies inside $\Omega$ (\leaninline{\detokenize{Ω}}).

\begin{quote}
\begin{LeanCode}
def IsTestFn (Ω : Set (EuclideanSpace ℝ (Fin d))) (φ : EuclideanSpace ℝ (Fin d) → ℝ) : Prop := ContDiff ℝ (⊤ : ℕ∞) φ ∧ HasCompactSupport φ ∧ tsupport φ ⊆ Ω
\end{LeanCode}
\end{quote}

\paragraph{\leandecl{testGraphSet}}
The set of test-function graphs over $\Omega$ (\leaninline{\detokenize{Ω}}).

\begin{quote}
\begin{LeanCode}
def testGraphSet (Ω : Set (EuclideanSpace ℝ (Fin d))) : Set (H1amb Ω) := { U | ∃ (φ : EuclideanSpace ℝ (Fin d) → ℝ) (h : IsTestFn Ω φ), U = h.testGraph }
\end{LeanCode}
\end{quote}

\paragraph{\leandecl{H01}}
$H_0^1(\Omega) = W_0^{1,2}(\Omega)$ (\leaninline{\detokenize{H₀¹(Ω) = W₀^{1,2}(Ω)}}): the closure of the smooth compactly supported functions inside the ambient $H^1$ (\leaninline{\detokenize{H¹}}) space. As a topological closure it is automatically a closed, complete, real Hilbert space.

\begin{quote}
\begin{LeanCode}
def H01 (Ω : Set (EuclideanSpace ℝ (Fin d))) : Submodule ℝ (H1amb Ω) := (Submodule.span ℝ (testGraphSet Ω)).topologicalClosure
\end{LeanCode}
\end{quote}

\paragraph{\leandecl{solOp}}
The \textbf{solution operator} on $L^2(\Omega)$ (\leaninline{\detokenize{L²(Ω)}}) of a coercive form \leaninline{\detokenize{B}}: $G = \iota \circ (B^{\sharp})^{-1} \circ \iota^{\dagger}$ (\leaninline{\detokenize{G = ι ∘ (B♯)⁻¹ ∘ ι†}}), with \leaninline{\detokenize{ι = embL2 Ω}} the Rellich embedding and $(B^{\sharp})^{-1}$ (\leaninline{\detokenize{(B♯)⁻¹}}) the Lax--Milgram inverse of \leaninline{\detokenize{B}}.

\begin{quote}
\begin{LeanCode}
def solOp (B : H01 Ω →L[ℝ] H01 Ω →L[ℝ] ℝ) (hco : IsCoercive B) : L2D Ω →L[ℝ] L2D Ω := (embL2 Ω).comp ((hco.continuousLinearEquivOfBilin.symm : H01 Ω →L[ℝ] H01 Ω).comp (embL2 Ω).adjoint)
\end{LeanCode}
\end{quote}

\paragraph{\leandecl{rayleighValues}}
The values a bilinear form takes on the unit $L^2$ (\leaninline{\detokenize{L²}}) sphere.

\begin{quote}
\begin{LeanCode}
def rayleighValues (B : H01 Ω →L[ℝ] H01 Ω →L[ℝ] ℝ) : Set ℝ := (fun U => B U U) '' rayleighSphere Ω
\end{LeanCode}
\end{quote}

\paragraph{\leandecl{principalEigenvalue}}
\textbf{Principal eigenvalue} of a symmetric coercive form on $H_0^1(\Omega)$ (\leaninline{\detokenize{H₀¹(Ω)}}): the infimum of the Rayleigh quotient \leaninline{\detokenize{B[U, U]}} over the functions of unit $L^2$ (\leaninline{\detokenize{L²}}) norm.

\begin{quote}
\begin{LeanCode}
def principalEigenvalue (B : H01 Ω →L[ℝ] H01 Ω →L[ℝ] ℝ) : ℝ := sInf (rayleighValues B)
\end{LeanCode}
\end{quote}
\endgroup

\setstretch{0.5}
\printbibliography

\hrule

\Addresses{%
  \kmsaddress{AJSF}{Hessah al-Mubarak, Kuwait City, Capital Governorate, State of
  Kuwait}{asoto12@alumni.jh.edu}
  \kmsaddress{KMS}{Department of Mathematics, Johns Hopkins University, Baltimore, MD
  21218, USA}{kmarsh34@jh.edu}
}

\end{document}